\newcommand{\R}{{\mathbb R}}
\newcommand{\Z}{{\mathbb Z}}
\newcommand {\lbk}{\linebreak}
\newcommand {\menos}{\backslash}
\newcommand {\gd}{\displaystyle}
\newcommand {\con}{\subset}
\newcommand {\rt}{\rightarrow}
\newcommand {\fraco}{\rightharpoonup}
\numberwithin{equation}{section}
\newtheorem{theorem}{Theorem}[section]
\newtheorem{corollary}[theorem]{Corollary}
\newtheorem{lemma}[theorem]{Lemma}
\newtheorem{proposition}[theorem]{Proposition}
\theoremstyle{definition}
\newtheorem{definition}[theorem]{Definition}
\newtheorem{remark}[theorem]{Remark}
\begin{document}

\title{Hénon type equations and concentration on spheres}

\thanks{E. Moreira dos Santos is partially supported by CNPq and FAPESP. F. Pacella is partially supported by PRIN 2009-WRJ3W7 grant}

\author{Ederson Moreira dos Santos}
\address{Ederson Moreira dos Santos \newline \indent Instituto de Ciências Matemáticas e de Computação \newline \indent Universidade de São Paulo \newline \indent
Caixa Postal 668, CEP 13560-970 - S\~ao Carlos - SP - Brazil}
\email{ederson@icmc.usp.br}

\author{Filomena Pacella}
\address{Filomena Pacella \newline \indent Dipartimento di Matematica \newline \indent Università di Roma {\it{Sapienza}} \newline \indent
P.le. Aldo Moro 2, 00184 Rome, Italy}
\email{pacella@mat.uniroma1.it}

\date{\today}

\subjclass[2000]{35B06; 35B07;  35B40;  35J15; 35J61}
\keywords{Hénon type problems; Black holes; Concentration phenomena; Symmetry}

\begin{abstract}
In this paper we study the concentration profile of various kind of symmetric solutions of some semilinear elliptic problems arising in astrophysics and in diffusion phenomena. Using a reduction method we prove that doubly symmetric positive solutions in a $2m$-dimensional ball must concentrate and blow up on $(m-1)$-spheres as the concentration parameter tends to infinity. We also consider axially symmetric positive solutions in a ball in $\R^N$, $N \geq 3$, and show that concentration and blow up occur on two antipodal points, as the concentration parameter tends to infinity. 
\end{abstract}
\maketitle

\section{Introduction}
We consider semilinear elliptic problems of the type
\begin{equation}\label{general equation}
\left\{
\begin{array}{l}
- \Delta u = h(x)|u|^{p-2}u \quad \text{in} \quad B_N(0,1),\\
u = 0 \quad \text{on} \quad \partial B_N(0,1),
\end{array}
\right. 
\end{equation}
where $B_{N}(0,1)$ is the unit open ball centered at the origin in $\R^N$, $N \geq 3$, and $p>2$.

If $N = 2m$, $m > 1$ and $x = (y_1, y_2)$, $y_i \in \R^m$, $i=1,2$, we take
\begin{equation}\label{weight henon}
 h(x) = |x|^{\alpha} = |(y_1, y_2)|^{\alpha}, \quad \alpha>0
\end{equation}
or
\begin{equation}\label{weight 1.3}
 h(x) = |y_2|^{\alpha}, \quad \alpha>0.
\end{equation}

If $N\geq 3$ and $x = (x_1, \ldots, x_N) \in \R^N$ we take
\begin{equation}\label{weight 1.4}
h(x) = |x_N|^{\alpha}, \quad \alpha>0.
\end{equation}

The first choice corresponds to the case of the well-known Hénon equation \cite{henon}, while the other two are variants which have interest in applications as we will explain later.

From the mathematical point of view problem \eqref{general equation} has an interesting and rich structure and various results have been proved so far, some of which will be described in the sequel. Let us recall that, in the case of \eqref{weight henon}, it was first observed in \cite{ni} that the presence of the weight $|x|^{\alpha}$ modifies the consequences of the Poho\v zahev identity and produces a new critical exponent, namely $2(N+\alpha)/(N-2)$, for the existence of classical solutions. In \cite{BW,cao-peng-yan,smets-su-willem}, symmetry breaking, asymptotics and single point concentration profile at the boundary of the least energy solutions, as $\alpha \rt \infty$, are described.  Moreover, in \cite{pacella,smets-willem},  it is proved that any least energy solution is foliated Schwarz symmetric. More recently, the existence of infinitely many positive solutions have been proved in \cite{wei-yan}, in the case $p = 2N/(N-2)$.  

The main purpose of this paper is to present a new  feature of the Hénon equation and of its variant with $h(x)$ given by \eqref{weight 1.3}, namely the existence of positive solutions concentrating on spheres, as $\alpha \rt \infty$.

In the study of semilinear elliptic equations with power-like nonlinearities there are few results of this type, unlike the case the concentration at a single point about which a large literature is available. For the case of \eqref{weight 1.4} we will show instead concentration at antipodal points for some axially symmetric solutions, as $\alpha \rt \infty$.

Before stating precisely our results let us outline the connections between our problems and some mathematical models arising in astrophysics and diffusion processes. 

It was during the golden age of general relativity, from 1960 to 1975, that astrophysicists started to devote intensive attention to understand and to detect the existence of black holes in globular clusters. In 1972, Peebles \cite{peebles1,peebles2} published seminal works describing a stationary distribution of stars near a massive collapsed object, such as a black hole, located at the center of a globular cluster.  In recent years, as in \cite{existBHGC}, the existence of single black holes in globular clusters have been perceived. More recently, it was reported in \cite{twoblackholes} the presence of two flat-spectrum radio sources in the Milky Way globular cluster M22.

It can be derived that the existence of stationary stellar dynamics models, cf. \cite{batt,batt-f-horst,li,li-santanilla,batt-li}, is equivalent to the solvability of the equation
\[
-\Delta U(x)  = f(|x|, U(x)) \quad \text{in} \quad \R^3,
\]
which, in the case $f(|x|, U) = |x|^{\alpha}|U|^{p-2}U$, $\alpha>0$, $p>2$, becomes the Hénon equation,
\[
-\Delta U(x)  = |x|^{\alpha}|U|^{p-2}U \quad \text{in} \quad \R^3,
\]
where the weight $|x|^{\alpha}$ represents a black hole located at the center of the cluster, whose absorption strength increases as $\alpha$ increases. This corresponds to the case of problem \eqref{general equation} with $h(x)$ given by \eqref{weight henon}, while when we take $h(x)$ as in \eqref{weight 1.3} or \eqref{weight 1.4} it corresponds to a supermassive absorbing object represented by a higher dimensional body.

Besides its application to astrophysics, problem \eqref{general equation} also models steady-state distributions in other diffusion processes.  For example, suppose $u(x)$ represents the density of some chemical solute, as a function of the position $x$, confined in a ball $B$. In this case $|u|^{p-2}u$ corresponds to the reaction term and the weight $h(x)$ is an intrinsic property of the medium $B$, which inhibits diffusion at the region $A$ where $h=0$ and hampers diffusion close to $A$. So, in case $h$ is one of \eqref{weight henon}, \eqref{weight 1.3} or \eqref{weight 1.4},  stronger obstructions correspond to larger values of $\alpha$. This line of reasoning leads that, as $\alpha \rt \infty$, concentration on parts of the domain far away from $A$ should occurs; cf. \cite{BW,cao-peng-yan} in the case of \eqref{weight henon}. 

Our results show that models having objects which inhibit diffusion, \linebreak represented by a point as in \eqref{weight henon} or by objects of higher dimension as in \eqref{weight 1.3}, can produce concentration on spheres, located as far as possible from the absorbing object as the parameter $\alpha$ tends to infinity.

We point out that our results, in the cases of \eqref{weight henon} and \eqref{weight 1.3} holds in balls contained in $\R^{2m}$, $m>1$ and hence not in $\R^3$ which is the relevant case for the astrophysics models. We believe that the concentration phenomenon  we describe should give insights also for the $3$-dimensional case. However, for other diffusion processes it is meaningful to pose the problem in higher dimensional spaces. 

The result we get regarding problem \eqref{general equation} with $h(x)$ given by \eqref{weight 1.4} instead applies to any dimension $N \geq 3$, so, in particular, covers the case of the astrophysical model.

To state precisely our results we need to introduce some notations that we will keep throughout the paper.  For each $r >0$, we set
\begin{equation}\label{sphere radius r in rk}
\begin{array}{l}
 B_k(0,r) := \{y \in \R^{k}; |y| < r\}, \quad S^{k-1}_r : = \{ y \in \R^{k}; |y| = r \} \quad \text{and}\\\\
 S^{k-1} : = \{ y \in \R^{k}; |y| = 1 \}.
 \end{array}
 \end{equation}
In addition, for sake of clarity, since we will perform changes of variables that will affect the space dimension, for any function $u: \Omega \con \R^k \rt \R$ we denote the usual Laplacian of $u$ in $\R^k$ by
\[
\Delta_{k}u (x) = \sum_{i =1}^{k} u_{x_ix_i}(x), \quad \forall \, x \in \Omega.
\]

We will say that $u: B_{2m}(0,1) \rightarrow \R$, $m \geq 1$, is doubly symmetric if
\[
u(y_1, y_2) = u (|y_1|, |y_2|), \quad \forall \, (y_1, y_2) \in B_{2m}(0,1).
\] 

Our first result concerns the case \eqref{weight henon} and describes the concentration profile of doubly symmetric solutions of the Hénon equation, that is:
\begin{equation}\label{radial 2m henon introduction}
\left\{
\begin{array}{l}
- \Delta_{2m} u = |(y_1, y_2)|^{\alpha} |u|^{p-2}u, \quad (y_1, y_2) \in B_{2m}(0,1), \\
u = 0 \quad \text{on} \quad \partial B_{2m}(0,1).
\end{array}
\right.
\end{equation}

\begin{theorem} \label{theorem henon concentration}
Assume $m > 1$,  $2 < p < \frac{2(m+1)}{m-1}$.  Then there exists $\alpha_0 = \alpha_{0}(p,m) > 4$ such that for each $\alpha > \alpha_0$ the following holds. Let $u_{\alpha}$ be any least energy solution among the doubly symmetric solutions of \eqref{radial 2m henon introduction}. Then, up to replacing $u_{\alpha}$ by $-u_{\alpha}$ and up to commuting $y_1$ and $y_2$, one has $u_{\alpha} > 0$ in $B_{2m}(0,1)$, $u_{\alpha}$ is not radially symmetric and there exists $0 < r_{\alpha} < 1$ such that
 \[
\mathcal{M}_{\alpha}: = \max_{(y_1, y_2) \in B_{2m}(0,1)} u_{\alpha}(y_1, y_2) = u_{\alpha}(y_{1}, 0), \quad \forall \; y_1 \in S^{m-1}_{r_{\alpha}}.
 \]
Moreover, $u_{\alpha}$ concentrates and blows up on $S^{m-1} \times \{0 \} \con \R^{2m}$, i.e., $r_{\alpha} \rt 1$, $\mathcal{M}_{\alpha} \approx \alpha^{2/(p-2)}$ and $\alpha (1 - r_{\alpha}) \rt \ell$ for some positive number $l$, as $\alpha \rt \infty$. 
\end{theorem}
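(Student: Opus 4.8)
The plan is to reduce the doubly symmetric Hénon problem in $B_{2m}(0,1)$ to a one-dimensional weighted ODE (or a planar radial problem) and to analyze its least energy solutions via a rescaling argument. Writing $s = |y_1|$, $t = |y_2|$, a doubly symmetric $u$ solves a problem on the quarter–disk $\{ s,t > 0,\ s^2 + t^2 < 1 \}$ with the degenerate elliptic operator $u_{ss} + u_{tt} + \frac{m-1}{s} u_s + \frac{m-1}{t} u_t$, and the energy functional carries the weight $s^{m-1} t^{m-1}(s^2+t^2)^{\alpha/2}$. The first step is to establish, via the moving plane / reflection method in the spirit of \cite{pacella,smets-willem} (foliated Schwarz symmetry), that for $\alpha$ large the least energy doubly symmetric solution is, after possibly swapping $y_1 \leftrightarrow y_2$ and changing sign, positive and "monotone" — its maximum over the quarter disk is attained on the segment $\{ t = 0 \}$, i.e. on $S^{m-1}_{r_\alpha} \times \{0\}$ in $\R^{2m}$. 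Degeneracy of the operator at $t=0$ is handled by approximation or by a Hopf-type lemma adapted to this weight.

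The second, and I expect main, step is the asymptotic analysis as $\alpha \to \infty$. Near the maximum point the weight $(s^2+t^2)^{\alpha/2}$ with $s \approx r_\alpha$, $t \approx 0$ behaves, after the substitution $s = r_\alpha + \sigma/\alpha$, like $r_\alpha^\alpha \exp(\sigma/r_\alpha + O(1/\alpha))$, so one linearizes the log of the weight and obtains in the limit an exponential weight $e^{\sigma/\ell}$ on a half-line (the factors $s^{m-1}$, $t^{m-1}$ are essentially frozen at the scale $1/\alpha$ since $r_\alpha \to 1$). This motivates rescaling $u_\alpha$ by $\mathcal{M}_\alpha = \alpha^{2/(p-2)}$ and the spatial variables by $\alpha$, and showing the rescaled solutions converge to a nontrivial solution $w$ of a limiting problem $-\Delta w = e^{\langle e_1, z\rangle /\ell}|w|^{p-2}w$ (or its $1$-D reduction) on a half-space / the whole space, whose existence and finite energy forces the subcritical condition $p < \frac{2(m+1)}{m-1}$ — this is where that exponent enters, as the Sobolev exponent of the effective problem in the reduced "dimension". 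One must prove: (i) uniform energy bounds and the blow-up rate $\mathcal{M}_\alpha \approx \alpha^{2/(p-2)}$, by comparing the least energy level with an explicit family of test functions concentrated near $S^{m-1}\times\{0\}$ and obtaining matching upper and lower bounds; (ii) that $r_\alpha \to 1$, ruling out concentration in the interior by the fact that the weight strongly favors $|x|$ close to $1$ (a standard energy comparison: placing the bump closer to the boundary strictly lowers the energy for $\alpha$ large); (iii) that $\alpha(1 - r_\alpha)$ stays bounded and converges, which comes from the Euler–Lagrange / Pohozaev-type balance in the limiting problem fixing $\ell$ uniquely.

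The key estimates to carry out are therefore: the energy expansion of the least energy level, $\inf \mathcal{E}_\alpha \sim c\, \alpha^{-\gamma} r_\alpha^{2\alpha/(p-2)} \cdot(\text{correction})$, obtained by optimizing over concentrated test functions; the a priori $L^\infty$ bound and non-degeneracy of the blow-up (so that the rescaled limit is nontrivial), via a Gidas–Spruck / blow-up contradiction argument using the classification of solutions to the limit equation and a Liouville theorem; and finally the identification of $\ell$ through the reduced ODE. I expect the main obstacle to be controlling the interplay of the three weights $s^{m-1}$, $t^{m-1}$, $(s^2+t^2)^{\alpha/2}$ simultaneously in the blow-up limit — in particular justifying that the $t^{m-1}$ factor produces, after rescaling, the effective $(m-1)$-dimensional radial Laplacian in the $t$-direction (hence the effective dimension $m+1 = 2 + (m-1)$ and the exponent $\frac{2(m+1)}{m-1}$), while the $s^{m-1}$ factor merely contributes a constant and the $(s^2+t^2)^{\alpha/2}$ factor contributes the exponential. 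Making this convergence rigorous, including compactness of the rescaled solutions in the right weighted space and exclusion of mass escaping to infinity in the limiting problem, is the technical heart of the argument.
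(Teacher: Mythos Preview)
Your approach differs substantially from the paper's, and the paper's route is both shorter and avoids the main obstacle you flag at the end. The paper does \emph{not} work on the quarter-disk in $(s,t)=(|y_1|,|y_2|)$ with the degenerate operator $u_{ss}+u_{tt}+\tfrac{m-1}{s}u_s+\tfrac{m-1}{t}u_t$. Instead it performs a further change of variables (due to Pacella--Srikanth): with $r=\sqrt{2\rho}$, $\theta=\sigma/2$, the identity
\[
\Delta_{2m}u(y_1,y_2)=2|z|\,\Delta_{m+1}v(z), \qquad z\in B_{m+1}(0,1/2),
\]
sets up a bijection between doubly symmetric $C^2$ functions on $B_{2m}(0,1)$ and functions on $B_{m+1}(0,1/2)$ that are axially symmetric about $\R e_{m+1}$. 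Under this map the H\'enon equation \eqref{radial 2m henon introduction} becomes, after a dilation, \emph{again a H\'enon equation} in dimension $m+1$:
\[
-\Delta_{m+1}w=|z|^{(\alpha-2)/2}|w|^{p-2}w \quad\text{in }B_{m+1}(0,1),\qquad w=0 \text{ on }\partial B_{m+1}(0,1).
\]
All of the conclusions of the theorem --- positivity, non-radiality, foliated Schwarz symmetry (hence the maximum on the $e_{m+1}$-axis, i.e.\ on $S^{m-1}_{r_\alpha}\times\{0\}$ upstairs), the blow-up rate $\alpha^{2/(p-2)}$, and $\alpha(1-\tau_\alpha)\to\ell$ --- are then read off directly from the existing literature on ground states of the H\'enon equation (Smets--Su--Willem, Smets--Willem, Pacella, Byeon--Wang, Cao--Peng--Yan). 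The proof of the theorem is literally one line once this reduction and the correspondence of least-energy levels are in place.

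Your plan, by contrast, proposes to redo the entire Cao--Peng--Yan style blow-up analysis from scratch on the degenerate two-dimensional problem. This could in principle be made to work, but the step you single out as the main obstacle --- handling the three weights $s^{m-1}$, $t^{m-1}$, $(s^2+t^2)^{\alpha/2}$ simultaneously and extracting the effective $(m+1)$-dimensional structure in the limit --- is exactly what the Pacella--Srikanth change of variables does \emph{exactly and for every $\alpha$}, not just asymptotically. More seriously, your first step (foliated Schwarz symmetry / moving planes to locate the maximum on $\{t=0\}$) is stated for the degenerate operator with the singular coefficients $\tfrac{m-1}{s},\tfrac{m-1}{t}$; the results of \cite{pacella,smets-willem} you invoke are for the standard Laplacian in $\R^N$ and do not apply directly here. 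In the paper this issue never arises, because after the reduction one is working with the genuine Laplacian $\Delta_{m+1}$ on a ball, and the cited symmetry results apply verbatim.
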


Next we consider the case \eqref{weight 1.3} and describe the concentration profile of doubly symmetric solutions of the equation
\begin{equation}\label{partial henon equation introduction}
 \left\{
\begin{array}{l}
- \Delta_{2m} u = |y_2|^{\alpha} |u|^{p-2}u, \quad (y_1, y_2) \in B_{2m}(0,1), \\
u = 0 \quad \text{on} \quad \partial B_{2m}(0,1).
\end{array}
\right.
\end{equation}

\begin{theorem} \label{theorem partial henon concentration}
Assume $m > 1$,  $2 < p < \frac{2(m+1)}{m-1}$. Then there exists $\alpha_0 = \alpha_{0}(p,m) > 4$ such that for each $\alpha > \alpha_0$ the following holds. Let $u_{\alpha}$ be any least energy solution among the doubly symmetric solutions of \eqref{partial henon equation introduction}. Then, up to replacing $u_{\alpha}$ by $-u_{\alpha}$, one has $u_{\alpha} > 0$ in $B_{2m}(0,1)$, $u_{\alpha}$ is not radially symmetric and there exists $0 < r_{\alpha} < 1$ such that
  \[
\mathfrak {M}_{\alpha}: = \max_{(y_1, y_2) \in B_{2m}(0,1)} u_{\alpha}(y_1, y_2) = u_{\alpha}(0, y_2), \quad y_2 \in S^{m-1}_{r_{\alpha}}.
 \]
Moreover, $u_{\alpha}$ concentrates and blows up on $\{0\} \times S^{m-1} \con \R^{2m}$, i.e., $r_{\alpha} \rt 1$, $\mathfrak {M}_{\alpha} \approx \alpha^{2/(p-2)}$ and $\alpha (1 - r_{\alpha}) \rt \ell$ for some positive number $l$, as $\alpha \rt \infty$. 
\end{theorem}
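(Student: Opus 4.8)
\emph{Reduction and variational set-up.} The proof follows the same scheme as that of Theorem~\ref{theorem henon concentration}: a reduction exploiting the double symmetry, followed by a blow-up analysis around the expected concentration set, which leads to the \emph{same} limit problem, with the two $\R^{m}$-blocks and the relevant boundary point interchanged. Concretely, writing $s=|y_1|$, $t=|y_2|$, a doubly symmetric function on $B_{2m}(0,1)$ is a function $v=v(s,t)$ on the quarter disc $D=\{(s,t):s,t>0,\ s^2+t^2<1\}$, and \eqref{partial henon equation introduction} becomes
\[
-\Bigl(v_{ss}+v_{tt}+\tfrac{m-1}{s}v_s+\tfrac{m-1}{t}v_t\Bigr)=t^{\alpha}|v|^{p-2}v \quad\text{in }D,
\]
with $v=0$ on the spherical part of $\partial D$ and the natural conditions on $\{s=0\}$ and $\{t=0\}$; equivalently, keeping $y_1\in\R^{m}$ as a genuine variable, $-\operatorname{div}\bigl(t^{m-1}\nabla U\bigr)=t^{m-1+\alpha}|U|^{p-2}U$ on the half ball $\{(y_1,t):|y_1|^2+t^2<1,\ t>0\}\subset\R^{m+1}$. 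Let $H\subset H^1_0(B_{2m}(0,1))$ be the closed subspace of doubly symmetric functions and $J_\alpha(u)=\tfrac12\int|\nabla u|^2-\tfrac1p\int|y_2|^{\alpha}|u|^p$. The least energy doubly symmetric solution $u_\alpha$ is the minimiser of $J_\alpha$ on the Nehari manifold of $J_\alpha|_H$; its existence rests on the \emph{compact embedding} $H\hookrightarrow L^p\bigl(B_{2m}(0,1),\,|y_2|^{\alpha}\,dx\bigr)$, valid precisely for $2<p<\tfrac{2(m+1)}{m-1}$: near $\{y_2=0\}$ --- where the embedding of $H$ by itself would only reach the exponent $\tfrac{2m}{m-1}$ (at the origin) --- the weight $|y_2|^{\alpha}$ removes the loss of compactness, whereas away from $\{y_2=0\}$ the doubly symmetric structure is that of a radial $H^1$ in $\R^{m+1}$, whose critical exponent is exactly $\tfrac{2(m+1)}{m-1}$. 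By the principle of symmetric criticality $u_\alpha$ solves \eqref{partial henon equation introduction}; replacing $u_\alpha$ by $|u_\alpha|$, then if necessary by $-u_\alpha$, and invoking the strong maximum principle, we may take $u_\alpha>0$ in $B_{2m}(0,1)$; and $u_\alpha$ cannot be radial, since otherwise $-\Delta_{2m}u_\alpha$ would be radial while $|y_2|^{\alpha}u_\alpha^{p-1}$ is not.

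\emph{Location of the maximum.} Because $|y_2|^{\alpha}$ is independent of $y_1$, the moving plane method applied to $u_\alpha$ in the directions of the $y_1$-block (using $u_\alpha>0$, $u_\alpha=0$ on $\partial B_{2m}(0,1)$, and the $y_1$-radiality already present) shows that $u_\alpha$ is non-increasing in $|y_1|$. Hence $\mathfrak{M}_\alpha:=\max_{B_{2m}(0,1)}u_\alpha$ is attained on $\{0\}\times S^{m-1}_{r_\alpha}$ for some $r_\alpha$, and $r_\alpha<1$ since $u_\alpha$ vanishes on $\partial B_{2m}(0,1)$; that $r_\alpha>0$, and in fact $r_\alpha\to1$, for $\alpha$ large will come out of the blow-up analysis. (A lower bound on $\alpha$, here $\alpha_0>4$, is needed for technical reasons, e.g. the regularity of $u_\alpha$ required to run these arguments across the singular set $\{y_2=0\}$ and in the energy estimates below.)

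\emph{Blow-up as $\alpha\to\infty$.} The maximum of $t=|y_2|$ over $\overline D$ is attained at $(0,1)$ and the weight $t^{\alpha}$ concentrates there, so I would rescale around $\{0\}\times S^{m-1}$ at scale $1/\alpha$: put $\xi=\alpha|y_1|$, $\eta=\alpha\bigl(1-|y_2|\bigr)$, $w_\alpha=\alpha^{-2/(p-2)}u_\alpha$. Since $\bigl(1-\eta/\alpha\bigr)^{\alpha}\to e^{-\eta}$ and $\tfrac{m-1}{\alpha|y_2|}\to0$, and since the rescaled domains increase to the quadrant $\{\xi>0,\ \eta>0\}$ while the spherical boundary $\{|y_2|=1\}$ flattens onto $\{\eta=0\}$, one is led to the limit problem (with $W=W(z,\eta)$, $z\in\R^{m}$)
\[
-\Delta_{m+1}W=e^{-\eta}|W|^{p-2}W \quad\text{in }\R^{m}\times(0,\infty),\qquad W=0 \text{ on }\R^{m}\times\{0\},
\]
which is subcritical exactly when $p<\tfrac{2(m+1)}{m-1}$. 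The argument then has three steps: (i) an upper bound for the least energy level $c_\alpha$, obtained by testing $J_\alpha$ with a suitably rescaled and cut-off ground state $W$ of the limit problem; (ii) uniform bounds on $w_\alpha$ in the relevant weighted space together with a concentration--compactness argument, giving, after a translation in $z$ (which by the monotonicity of Step~2 may be taken to be $0$), $w_\alpha\to W$ locally, with $W$ a nonzero solution of the limit problem whose energy does not exceed the bound in (i), hence a ground state; (iii) matching the two energy estimates, which forces $\mathfrak{M}_\alpha\approx\alpha^{2/(p-2)}$, $r_\alpha\to1$, and $\alpha(1-r_\alpha)\to\ell$, where $\ell>0$ is the $\eta$-coordinate of the maximum of $W$ (positive thanks to the Dirichlet condition at $\eta=0$); in particular $r_\alpha>0$ for all $\alpha>\alpha_0$.

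\emph{Main difficulty.} The crux is the compactness in Step~(ii). Mass cannot escape to $\eta=+\infty$ (blocked by the decay of $e^{-\eta}$) nor to $\eta=0$ (blocked by the Dirichlet condition), but it could a priori escape to infinity in the $z=\alpha y_1$ direction, the limit problem being translation invariant there; this genuine dichotomy has to be excluded, at the level of least energy solutions, by strict subadditivity of the limit energy together with a re-centring. This must be combined with the careful justification of the passage to the limit equation --- the uniform convergence $(1-\eta/\alpha)^{\alpha}\to e^{-\eta}$ on the scales involved, the flattening of the spherical boundary, and the handling of the degenerate coefficient $\tfrac{m-1}{\xi}$ near $\xi=0$. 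A secondary subtlety is that $\ell$ is read off the limit ground state, so one needs its uniqueness up to translation in $z$ (or at least that every ground state has the same $\eta$-coordinate of maximum); settling this, or bypassing it, is the last point to nail down.
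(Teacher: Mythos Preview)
Your outline is essentially correct but follows a \emph{different} reduction from the one the paper uses, and it is worth spelling out the contrast.

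The paper does not work in the variables $(s,t)=(|y_1|,|y_2|)$ on the quarter disc (equivalently, with the degenerate operator $-\operatorname{div}(t^{m-1}\nabla\,\cdot\,)$ on the half-ball in $\R^{m+1}$). Instead it applies the Pacella--Srikanth change of variables from Section~\ref{section reduction}: writing $|y_1|=r\cos\theta$, $|y_2|=r\sin\theta$ and then $r=\sqrt{2\rho}$, $\theta=\sigma/2$, a doubly symmetric function on $B_{2m}(0,1)$ becomes an \emph{axially symmetric} function on $B_{m+1}(0,1/2)$, and the identity $\Delta_{2m}u=2|z|\,\Delta_{m+1}v$ converts \eqref{partial henon equation introduction} into the non-degenerate problem
\[
-\Delta_{m+1}v=\dfrac{(|z|-z_{m+1})^{\alpha/2}}{2|z|}\,|v|^{p-2}v \quad\text{in } B_{m+1}(0,1/2),\qquad v=0 \text{ on }\partial B_{m+1}(0,1/2),
\]
with the standard Laplacian and a bounded weight (for $\alpha>2$). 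The remainder of the paper's argument --- comparison of the Rayleigh quotient $S'_{\alpha,p}$ with the H\'enon quotient $S_{\alpha,p}$, the test-function upper bound, the $L^\infty$ bound via Liouville, and the convergence in $\mathcal D^{1,2}_0(\R^{m+1}_+)$ to a ground state of $-\Delta w=e^{-z_{m+1}/2}w^{p-1}$ --- is then carried out entirely for this reduced problem, closely mirroring the Cao--Peng--Yan analysis of the H\'enon equation.

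What each route buys: the paper's reduction removes the degeneracy of the operator at $\{t=0\}$ at the price of a more intricate weight, so that standard elliptic regularity and the existing H\'enon machinery apply verbatim; your direct approach keeps the simple weight $t^{\alpha}$ and makes the appearance of $e^{-\eta}$ in the limit completely transparent, but you must justify regularity and the blow-up estimates for a degenerate divergence-form operator (your ``handling of the degenerate coefficient $\tfrac{m-1}{\xi}$ near $\xi=0$''). Since the concentration occurs at $t=1$, far from the degeneracy, this is doable, but it is genuine extra work that the paper sidesteps. Your worry about escape of mass in the $z$-direction is already neutralised by the monotonicity in $|y_1|$ that you established, exactly as you note; the paper, working in the axially symmetric picture, never sees this issue explicitly because the reduced solution is automatically centred on the axis. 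Finally, your last concern about the well-definedness of $\ell$ is legitimate: the paper's Proposition~\ref{proposition sharp maximum point} also reads $\ell$ off the particular limit $w$, so convergence of the full family (rather than subsequences) implicitly relies on uniqueness, up to horizontal translation, of the ground state of the limit problem --- a point neither you nor the paper makes fully explicit.
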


Let us stress that from the physical point of view, the concentration phenomenon described in Theorem \ref{theorem partial henon concentration} is indeed expected. Since the set
\[
\mathfrak{D_m} = \{ (y_1, 0) \in \R^m \times \R^m; |y_1| < 1 \}
\]
inhibits diffusion and its hindrance to diffusion increases with $\alpha$, the maximum point of the density $u$ is expected to be as far apart from $\mathfrak{D_m}$ as possible when $\alpha$ tends to infinity. However, the concentration profile described in \linebreak Theorem \ref{theorem henon concentration} is a bit less evident. Indeed in this case, the set $\mathfrak{D_0}$ which \linebreak inhibits diffusion is reduced to the origin $(0,0)$. Then there are three possible doubly symmetric sets, as far away as possible from $\mathfrak{D_0}$, where in principle, concentration could occurs, namely $S^{m-1} \times \{0 \} \con \R^{2m}$ (which up to rotation is the same as $\{0\} \times S^{m-1} \con \R^{2m}$), $S^{m-1} \times S^{m-1} \con \R^{2m}$ or $S^{2m-1} \con \R^{2m}$. Nevertheless, thinking about reducing the energy, the second and third \linebreak possible concentration profiles can be excluded since solutions concentrating on $S^{m-1} \times \{0 \} \con \R^{2m}$ have lower energy than those that concentrate on $S^{m-1} \times S^{m-1} \con \R^{2m}$ or on $S^{2m-1} \con \R^{2m}$. This is in fact the meaning of Theorem \ref{theorem henon concentration}.

Note that in Theorem \ref{theorem henon concentration} and Theorem \ref{theorem partial henon concentration} the exponent $2 < p < 2(m+1)/(m-1)$ can be larger than the critical Sobolev exponent in dimension $2N$, namely $4N/(2N-2)$, and that $2(m+1)/(m-1)$ is the critical Sobolev exponent in dimension $m+1$.

Finally we turn to the case \eqref{weight 1.4} where we are able to get results in any dimension $N \geq 3$. Let us point out that in the $3$-dimensional case with $x = (x_1, x_2, x_3) \in B_3(0,1)$, the weight $|x_3|^{\alpha}$ represents a two dimensional absorbing object described by 
\[
\mathfrak{D}_ 2= \{ (x_1,x_2, 0) \in \R^2 \times \R; |(x_1,x_2)| < 1 \},
\]
whose absorption strength increases as $\alpha$ increases. In this case, reasoning  as above, a concentration phenomenon on the antipodal points $(0,0,1)$ and $(0,0,-1)$ is expected as $\alpha \rt \infty$. Indeed, we prove this result in any dimension. 

\begin{theorem}\label{theorem two point concentration}
Let $N\geq3$ and $2 < p < 2N/(N-2)$. Consider the problem
\begin{equation}\label{hyper-plane of black holes}
\left\{
\begin{array}{l}
 -\Delta_{N} u = |x_N|^{\alpha} |u|^{p-2}u, \qquad x= (x_1, \ldots, x_N) \in B_{N}(0,1),\\
 u = 0 \quad \text{on} \quad \partial B_{N}(0,1).
\end{array}
\right.
\end{equation}
Let $u_{\alpha}$ be any least energy solution of \eqref{hyper-plane of black holes} among the solutions that are axially symmetric with respect to $\R e_{N} \con \R^{N}$ and symmetric with respect to $x_N$. Then, up to replacing $u_{\alpha}$ by $-u_{\alpha}$, one has $u_{\alpha} > 0$ in $B_{N}(0,1)$ and there exists $0 \leq r_{\alpha} < 1$ such that
  \[
\mathbf{M}_{\alpha}: = \! \! \max_{(x_1, \ldots, x_N) \in B_{N}(0,1)} u_{\alpha}(x_1, \ldots, x_N) \! = \! u_{\alpha}(0, \ldots, 0, r_{\alpha}) \! = \! u_{\alpha}(0, \ldots, 0, -r_{\alpha}).
 \]
Moreover, $u_{\alpha}$ concentrates and blows up, simultaneously, on the antipodal points $(0, \ldots, 0,1)$ and $(0, \ldots, 0, -1)$, i.e., $r_{\alpha} \rt 1$, $\mathbf{M}_{\alpha} \approx  \alpha^{2/(p-2)}$ and $\alpha (1 - r_{\alpha}) \rt \ell$ for some positive number $l$, as $\alpha \rt \infty$. 
\end{theorem}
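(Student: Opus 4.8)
\medskip
\noindent\emph{Sketch of proof.} The plan is to characterise $u_\alpha$ variationally in the symmetric class so that compactness and the location of its maximum are available, to show that the least energy level has the precise order $\alpha^{\gamma}$ with $\gamma:=\tfrac{4}{p-2}+2-N=\tfrac{2p-N(p-2)}{p-2}$ (note $\gamma>0$ exactly because $p<\tfrac{2N}{N-2}$), and then to blow $u_\alpha$ up around the two poles $\pm e_N$, where the limiting bubble is the ground state of $-\Delta w=e^{y_N}w^{p-1}$ on a half-space. Concretely, set $H:=\{u\in H^1_0(B_N(0,1)):u\text{ axially symmetric w.r.t. }\R e_N,\ u(x',x_N)=u(x',-x_N)\}$ and $E_\alpha(u)=\tfrac12\int_{B_N}|\nabla u|^2-\tfrac1p\int_{B_N}|x_N|^\alpha|u|^p$. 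Since $2<p<\tfrac{2N}{N-2}$, $H\hookrightarrow L^p(B_N)$ is compact, so $c_\alpha:=\inf_{\mathcal{N}_\alpha\cap H}E_\alpha$ is attained by some $u_\alpha$, where $\mathcal{N}_\alpha=\{u\neq0:\ip{E_\alpha'(u)}{u}=0\}$; by symmetric criticality $u_\alpha$ solves \eqref{hyper-plane of black holes} and $c_\alpha=(\tfrac12-\tfrac1p)\|\nabla u_\alpha\|_2^2=(\tfrac12-\tfrac1p)\int_{B_N}|x_N|^\alpha|u_\alpha|^p$. Replacing $u_\alpha$ by $|u_\alpha|\in H$ does not raise $E_\alpha$, so $u_\alpha\ge0$ (up to the sign change in the statement) and then $u_\alpha>0$ by the strong maximum principle. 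Writing $u_\alpha(x)=v_\alpha(|x'|,x_N)$ reduces the equation to the degenerate H\'enon-type problem $-\operatorname{div}(s^{N-2}\nabla v)=s^{N-2}|t|^\alpha v^{p-1}$ on $\{s>0,\ s^2+t^2<1\}$ with $v$ even in $t$ --- the ``reduction'' of the abstract. Finally, since the weight $|x_N|^\alpha$ is independent of $x_1,\dots,x_{N-1}$, the moving plane method in the directions orthogonal to $e_N$ shows that $u_\alpha$ is strictly decreasing in $|x'|$; combined with the axial symmetry this forces the maximum onto the $x_N$-axis, so $\mathbf{M}_\alpha=u_\alpha(0,\dots,0,r_\alpha)=u_\alpha(0,\dots,0,-r_\alpha)$ for some $r_\alpha\in[0,1)$ (interior, since $u_\alpha>0$ inside and vanishes on $\partial B_N$).

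For the upper energy bound I would test with $\lambda_\alpha\bigl(\psi(\alpha(\cdot-e_N))+\psi(\alpha(\cdot+e_N))^{\rm refl}\bigr)$, $\psi\in C_c^\infty(\R^N)$ radial in $y'$ and supported in $\{y_N<-c_0\}$; for $\alpha$ large this lies in $H$, $|x_N|^\alpha=|1+y_N/\alpha|^\alpha\to e^{y_N}$ uniformly on its support, the Dirichlet energy is $\sim\alpha^{2-N}$, the weighted $L^p$-mass $\sim\alpha^{-N}$, and optimising $\lambda_\alpha$ on $\mathcal{N}_\alpha$ gives $c_\alpha\le 2E_\infty^\ast\alpha^{\gamma}(1+o(1))$, where $E_\infty^\ast$ is the ground-state level of the limit problem (sharp if $\psi$ is taken close to that ground state). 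A crude lower estimate follows from $\int_{B_N}|x_N|^\alpha|u|^p\le(1-\delta)^\alpha\int|u|^p+\int_{\{|x_N|\ge1-\delta\}}|u|^p$, controlling the first term by Sobolev and the second by a Poincar\'e inequality along the radial direction in the two caps $\{|x_N|\ge1-\delta\}\cap B_N$ (where $u_\alpha$ vanishes on the spherical part) interpolated with $L^{2^\ast}$; this already yields $c_\alpha\to\infty$, hence $\mathbf{M}_\alpha\to\infty$, and shows that for each fixed $\delta>0$ the part of $\int_{B_N}|x_N|^\alpha|u_\alpha|^p$ carried by $\{|x_N|\le1-\delta\}$ is negligible. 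Consequently any maximum point $x_\alpha$ converges to $\pm e_N$, and by the $x_N$-symmetry the profile near $e_N$ mirrors the one near $-e_N$.

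For the blow-up, rescale by the scale $\eps_\alpha$ fixed by $\mathbf{M}_\alpha^{p-2}\eps_\alpha^{2}|x_{\alpha,N}|^{\alpha}=1$ and set $v_\alpha(y):=\mathbf{M}_\alpha^{-1}u_\alpha(x_\alpha+\eps_\alpha y)$, so $0\le v_\alpha\le v_\alpha(0)=1$ and $-\Delta v_\alpha=|1+\eps_\alpha y_N/x_{\alpha,N}|^{\alpha}v_\alpha^{p-1}$ on a domain that exhausts a half-space $\{y_N<L\}$. Elliptic estimates give $C^2_{loc}$ convergence along subsequences; the Liouville-type nonexistence of positive bounded solutions of $-\Delta w=cw^{p-1}$ ($c>0$) on $\R^N$, on a half-space, and for their translation-invariant reductions --- all valid for $p<\tfrac{2N}{N-2}$ --- excludes every degenerate scaling and forces $\alpha\eps_\alpha\to a\in(0,\infty)$, $|x_{\alpha,N}|^{\alpha}\to e^{-b}$ with $b\in[0,\infty)$, $L\in(0,\infty)$, and $v_\alpha\to w$, where (after rescaling the exponential and translating) $w>0$ is the ground state of $-\Delta w=e^{y_N}w^{p-1}$ in $\{y_N<L\}$, $w=0$ on $\{y_N=L\}$, radial about the axis through its unique maximum by moving planes. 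Since $u_\alpha$ is a minimiser, no energy escapes the two bubbles, so $c_\alpha=(\tfrac12-\tfrac1p)\,2\,\mathbf{M}_\alpha^{2}\eps_\alpha^{N-2}\|\nabla w\|_2^{2}(1+o(1))$, which delivers the matching lower bound $c_\alpha\gtrsim\alpha^{\gamma}$, hence $c_\alpha\asymp\alpha^{\gamma}$.

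Reading off the statement: from $c_\alpha\asymp\mathbf{M}_\alpha^{2}\eps_\alpha^{N-2}\asymp\alpha^{\gamma}$ together with $\eps_\alpha\ge\mathbf{M}_\alpha^{-(p-2)/2}$ one gets $\mathbf{M}_\alpha\le C\alpha^{2/(p-2)}$; since the bubble must fit inside $B_N$ (so $1-r_\alpha\gtrsim\eps_\alpha$) and the weight varies by an $O(1)$ factor across it (so $\alpha\eps_\alpha\asymp1$), it follows that $\eps_\alpha\asymp1/\alpha$, $1-r_\alpha\gtrsim1/\alpha$, whence $|x_{\alpha,N}|^{\alpha}=r_\alpha^{\alpha}\in[e^{-C},1)$ and therefore $\mathbf{M}_\alpha^{-(p-2)/2}=\eps_\alpha\,r_\alpha^{\alpha/2}\asymp1/\alpha$, i.e. $\mathbf{M}_\alpha\asymp\alpha^{2/(p-2)}$. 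Finally $1-r_\alpha^{2}=2r_\alpha\eps_\alpha L(1+o(1))$ gives $r_\alpha\to1$ and $\alpha(1-r_\alpha)\to\ell$, with $\ell>0$ because the point $y=0$ lies in the open half-space $\{y_N<L\}$ where $w(0)=1$; uniqueness of $w$ up to translations parallel to $\{y_N=L\}$ makes $\ell$ a definite positive number, the same at both poles, which completes the proof. I expect the genuinely hard part to be this blow-up classification of the third step --- the Liouville theorems ruling out the degenerate scalings, and the existence, uniqueness up to translation and location of the maximum for the exponentially weighted half-space ground state --- together with making the two energy bounds meet at $2E_\infty^\ast\alpha^{\gamma}$; the reduction, the symmetry of the maximum, and the convergence itself should then be routine.
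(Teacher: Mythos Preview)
Your overall strategy matches the paper's: variational characterisation in the $\mathcal{O}(N-1)\times\Z_2$-symmetric class, moving planes to place the maximum on the $x_N$-axis, a test function near the poles for the upper energy bound, and a blow-up to the half-space limit problem $-\Delta w=e^{-z_N}w^{p-1}$. The execution differs in two places. For the lower energy bound the paper simply observes $|z_N|^\alpha\le|z|^\alpha$, so the symmetric Rayleigh constant $K'_{\alpha,p}$ dominates the ordinary H\'enon constant $K_{\alpha,p}$, for which the sharp asymptotics $K_{\alpha,p}\sim m_{1,p}\,\alpha^{[2N-p(N-2)]/p}$ are already known from Cao--Peng--Yan; this delivers the polynomial lower bound in one line and bypasses your cap-Poincar\'e/interpolation argument (which, as written, seems to give only a bounded lower constant and would need to be supplemented by the blow-up to become sharp). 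With both energy bounds in hand, the paper rescales directly by the \emph{known} scale $1/\alpha$ and proves $\mathbf{M}_\alpha\asymp\alpha^{2/(p-2)}$ and $\alpha(1-r_\alpha)=O(1)$ by short Liouville-type contradictions, and only then runs the $C^1_{loc}$ convergence. Your route---crude lower bound, blow-up at the intrinsic scale $\eps_\alpha$ defined via $\mathbf{M}_\alpha$ and $r_\alpha^\alpha$, then a Liouville case analysis to force $\alpha\eps_\alpha\to a\in(0,\infty)$ and $r_\alpha^\alpha\to e^{-b}$---is legitimate and standard, but it requires you to control simultaneously the two unknowns $\eps_\alpha$ and $r_\alpha^\alpha$ and to exclude several degenerate limits; the comparison with the H\'enon weight is the cleaner shortcut that lets the paper fix the scale before blowing up.
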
 

Let us explain shortly the strategies of the proofs of the above theorems.

To prove Theorem \ref{theorem henon concentration} and Theorem \ref{theorem partial henon concentration} we perform a change of variables which allows us to reduce problems \eqref{radial 2m henon introduction} and \eqref{partial henon equation introduction} to other semilinear problems in $\R^{m+1}$. The main feature of this transformation is that it sends in a bijective way doubly symmetric solutions of \eqref{radial 2m henon introduction} or \eqref{partial henon equation introduction} to axially symmetric solutions of the reduced problems. This explain why the exponent $p$ can be larger than the critical exponent in dimension $2N$. We mention that this approach was introduced in \cite{pacella-srikanth} to study concentration on spheres of some singularly perturbed problems in $\R^{2m}$ and relied on an idea of \cite{ruf-srikanth}.

So, in the case of \eqref{radial 2m henon introduction} and \eqref{partial henon equation introduction}, we are led to study the $(m+1)$-dimensional problems \eqref{radial m+1 henon} and \eqref{partial henon equation reduced} respectively, for which analyzing the behavior, as $\alpha \rt \infty$, of the solutions we get point concentration results, which, going back to the $2m$-dimensional problems, imply concentration on $(m-1)$-spheres.

We stress that while the reduced problem \eqref{radial m+1 henon} corresponding to \eqref{radial 2m henon introduction} is still a Hénon problem for whose analysis we can use existing results, the reduced problem \eqref{partial henon equation reduced} corresponding to \eqref{partial henon equation introduction} needs a complete new analysis, in particular for what concerns the study of a {\it{limit problem}} in $\R^{m+1}_+$.

A similar analysis is also used in the proof of Theorem \ref{theorem two point concentration} to study problem \eqref{hyper-plane of black holes} directly, without using any reduction argument.

The outline of this paper is the following. In Section \ref{section reduction} we explain the reduction method. In Section \ref{section radial} we prove Theorem \ref{theorem henon concentration} while in Section  \ref{section partial}, after several preliminary crucial estimates, we prove Theorem \ref{theorem partial henon concentration}. Finally in Section \ref{section two point concentration} we prove Theorem \ref{theorem two point concentration}.

\section{A preliminary reduction lemma}\label{section reduction}
Given an integer $m \geq 1$ we set $\mathcal{G}_m : = \mathcal{O}(m) \times \mathcal{O}(m) \con \mathcal{O}(2m)$. Then $g \in \mathcal{G}_m$ if, and only if, there exist $g_1, g_2 \in \mathcal{O}(m)$ such that
\[
g(y_1, y_2) = (g_1 y_1, g_2 y_2), \quad \forall \, y_1,y_2 \in \R^m.
\]

\begin{definition}
We say that a set  $\Omega \con \R^{2m}$ is invariant by the action of $\mathcal{G}_m$ if 
$g \Omega = \Omega$ for all $g \in \mathcal{G}_m$. Given a function $u: \Omega \rightarrow \R$ defined on an invariant set $\Omega$, we say that $u$ is doubly symmetric if 
\[
u (g(y_1,y_2))  = u(g_1y_1, g_2 y_2) = u(y_1, y_2), \ \forall \, (y_1,y_2) \in \Omega, \ \forall \, g=(g_1, g_2) \in \mathcal{G}_m.
\]
As above, a function $u: \Omega \rightarrow \R$, defined on a invariant set $\Omega \con \R^{2m}$, is said to be doubly symmetric if
\[
u(y_1, y_2) = u (|y_1|, |y_2|), \quad \forall \, (y_1, y_2) \in \Omega.
\] 
\end{definition}

Now we perform a suitable change of variables as in \cite{pacella-srikanth}. Given any point $(y_1, y_2) \in B_{2m}(0,1)$ we write:
\begin{equation}\label{identification 1}
\left\{
\begin{array}{l}
|y_1| = r \cos{\theta};  |y_2| = r \sin{\theta};  r= \sqrt{|y_1|^2 + |y_2|^2}; r \in [0,1), \theta \in \left[ 0, \dfrac{\pi}{2}\right]; \\
r = \sqrt{2 \rho}; \ \theta = \dfrac{\sigma}{2}; \ \rho \in \left[0, \dfrac{1}{2} \right) \; \text{and} \; \sigma \in [0, \pi].
\end{array}
\right.
\end{equation}

Given any doubly symmetric $C^2$-function $u : B_{2m}(0,1) \rt \R$, we write
\begin{equation}\label{identification 2}
u(y_1, y_2) = u(|y_1|, |y_2|) = u(r \cos \theta, r \sin \theta) = u (r, \theta) 
\end{equation}
and we get
\begin{multline*}
\Delta_{2m}u (y_1, y_2)  = u_{rr}(r , \theta) + \dfrac{2m-1}{r}u_r(r,\theta) \\
+ \frac{m-1}{r^2}\left( \dfrac{\cos \theta}{\sin \theta} - \dfrac{\sin \theta}{\cos \theta}\right) u_ {\theta}(r, \theta) + \dfrac{u_{\theta \theta}(r, \theta)}{r^2}.
\end{multline*}
Then we write $v(\rho, \sigma) : = u\left(\sqrt{2 \rho}, \dfrac{\sigma}{2}\right)$, with $r = \sqrt{2 \rho}$ and $\theta = \dfrac{\sigma}{2}$ and we get
\begin{multline*}
\Delta_{2m}u(y_1, y_2) = 2 \rho \left( v_{\rho \rho} (\rho, \sigma) + \dfrac{m}{\rho} v_ {\rho}(\rho, \sigma) \right. \\
\left. + \dfrac{m-1}{\rho^2} \dfrac{\cos \sigma}{\sin \sigma} v_{\sigma}(\rho, \sigma) + \dfrac{v_{\sigma \sigma}(\rho, \sigma)}{\rho^2} \right).
\end{multline*}

Now we recall that, if $v: B_{m+1}(0,1/2) \rt \R$ is an axially symmetric with respect to the axis $\R e_{m+1} \con \R^{m+1}$ $C^2$-function and if we set
\begin{equation}\label{identification 3}
z = (z_1, \ldots, z_{m+1}); \rho = |z|; z_{m+1} = \rho \cos \sigma; \rho \in \left[0, \dfrac{1}{2}\right) \; \text{and} \; \sigma \in [0, \pi],
\end{equation}
then 
\begin{equation}\label{identification 4}
 v(z_1, \ldots, z_m, z_{m+1}) = v(\rho, \sigma)
\end{equation}
and
\begin{multline*}
\Delta_{m+1} v(z_1, \ldots, z_{m+1}) =  v_{\rho \rho} (\rho, \sigma) + \dfrac{m}{\rho} v_ {\rho}(\rho, \sigma) \\
+ \dfrac{m-1}{\rho^2} \dfrac{\cos \sigma}{\sin \sigma} v_{\sigma}(\rho, \sigma) + \dfrac{v_{\sigma \sigma}(\rho, \sigma)}{\rho^2}.
\end{multline*}

At this point we have proved the following lemma; see also \cite[Section 3]{pacella-srikanth}.

\begin{lemma}\label{reduced laplacian}
 There exists a one to one correspondence between doubly \linebreak symmetric $C^2$-function $u : B_{2m} (0,1)\menos\{0\} \rt \R$ and the axially symmetric, with respect to the axis $\R e_{m+1} \con \R^{m+1}$, $C^2$-functions,  $v : B_{m+1}(0,1/2)\menos\{0\} \rt \R$. This correspondence is given by
\begin{multline}\label{identification 5}
 u(y_1, y_2) = u(|y_1|, |y_2|) = u(r \cos \theta, r \sin \theta) = u (r, \theta) \\
  =  u\left(\sqrt{2 \rho}, \dfrac{\sigma}{2}\right) = v(\rho, \sigma) = v(z_1, \ldots, z_{m+1}),
\end{multline}
with the change of variables \eqref{identification 1} and \eqref{identification 3}. Moreover,
\begin{equation} \label{identification 6}
 \Delta_{2m}u (y_1, y_2) = 2 |z| \Delta_{m+1}v(z_1, \ldots, z_m, z_{m+1}). 
\end{equation}
\end{lemma}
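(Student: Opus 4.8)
The plan is to prove Lemma~\ref{reduced laplacian} by assembling the computations that precede it into a clean statement about the correspondence and its effect on the Laplacian. Essentially all the analytic work has already been carried out in the displayed formulas above; what remains is to organize it and to verify that the correspondence is genuinely a bijection on the punctured balls.

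\textbf{Step 1: the correspondence of variables.} First I would observe that the map $(r,\theta)\mapsto(\rho,\sigma)$ given by $r=\sqrt{2\rho}$, $\theta=\sigma/2$ in \eqref{identification 1} is a smooth bijection from $(0,1)\times[0,\pi/2]$ onto $(0,1/2)\times[0,\pi]$, with smooth inverse $\rho=r^2/2$, $\sigma=2\theta$. Next, a doubly symmetric $C^2$ function $u$ on $B_{2m}(0,1)\setminus\{0\}$ is, by definition, a function of $(|y_1|,|y_2|)$, equivalently of the polar pair $(r,\theta)\in(0,1)\times[0,\pi/2]$; and an axially symmetric (with respect to $\R e_{m+1}$) $C^2$ function $v$ on $B_{m+1}(0,1/2)\setminus\{0\}$ is a function of $(|z|,\arccos(z_{m+1}/|z|))=(\rho,\sigma)\in(0,1/2)\times[0,\pi]$. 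So composing with the variable change \eqref{identification 1}, \eqref{identification 3} and setting $v(\rho,\sigma):=u(\sqrt{2\rho},\sigma/2)$ as in \eqref{identification 5} defines the claimed one-to-one correspondence. I would be slightly careful here about regularity at the poles $\theta=0,\pi/2$ (resp.\ $\sigma=0,\pi$): a function of $(|y_1|,|y_2|)$ that is $C^2$ on $B_{2m}(0,1)\setminus\{0\}$ corresponds, under \eqref{identification 1}, to a function on the punctured $(m+1)$-ball that is $C^2$ up to (and across) the axis, and vice versa, because the two geometric descriptions of a $\mathcal{G}_m$-orbit, respectively an $\mathcal{O}(m)$-axial orbit, match up; this is the content of \cite[Section 3]{pacella-srikanth} and I would cite it rather than reprove it.

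\textbf{Step 2: the two Laplacian computations.} The identity $\Delta_{2m}u=u_{rr}+\frac{2m-1}{r}u_r+\frac{m-1}{r^2}(\cot\theta-\tan\theta)u_\theta+\frac{1}{r^2}u_{\theta\theta}$ is just the expression of $\Delta_{2m}$ in the coordinates $(r,\theta)$ restricted to doubly symmetric functions; it follows by writing $\Delta_{2m}=\Delta_r^{(m)}\otimes 1 + 1\otimes\Delta_r^{(m)}$ in the $(|y_1|,|y_2|)$ variables (each radial Laplacian contributing a term $\partial_{\rho_i\rho_i}+\frac{m-1}{\rho_i}\partial_{\rho_i}$ with $\rho_i=|y_i|$) and then passing to $(r,\theta)$ with $|y_1|=r\cos\theta$, $|y_2|=r\sin\theta$. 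Likewise $\Delta_{m+1}v=v_{\rho\rho}+\frac{m}{\rho}v_\rho+\frac{m-1}{\rho^2}\cot\sigma\,v_\sigma+\frac{1}{\rho^2}v_{\sigma\sigma}$ is the standard spherical-coordinate form of the Laplacian on $\R^{m+1}$ for functions depending only on $(\rho,\sigma)=(|z|,\text{polar angle})$. Both are routine and are already displayed; I would present them as known or give the one-line derivation.

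\textbf{Step 3: matching the two expressions.} Finally I would substitute $r=\sqrt{2\rho}$, $\theta=\sigma/2$ into the $(r,\theta)$-expression for $\Delta_{2m}u$ and simplify: using $u_r=\frac{1}{r}v_\rho$ (from $\rho=r^2/2$), $u_{rr}=\frac{1}{r^2}v_\rho\cdot(\text{correction})+v_{\rho\rho}$ computed carefully, $u_\theta=\frac12 v_\sigma$, $u_{\theta\theta}=\frac14 v_{\sigma\sigma}$, and the half-angle identity $\cot\theta-\tan\theta=2\cot(2\theta)=2\cot\sigma$, one collects terms and obtains exactly $2\rho\big(v_{\rho\rho}+\frac{m}{\rho}v_\rho+\frac{m-1}{\rho^2}\cot\sigma\,v_\sigma+\frac{1}{\rho^2}v_{\sigma\sigma}\big)=2\rho\,\Delta_{m+1}v=2|z|\,\Delta_{m+1}v$, which is \eqref{identification 6}. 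This is the middle displayed formula above, so there is nothing new to do beyond recording it.

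\textbf{Main obstacle.} The computations themselves are elementary; the only genuinely delicate point is Step~1, namely justifying that the correspondence between doubly symmetric $C^2$ functions on the punctured $2m$-ball and axially symmetric $C^2$ functions on the punctured $(m+1)$-ball is a true bijection at the level of regularity (behaviour along the axis $\sigma\in\{0,\pi\}$, equivalently along the subspaces $y_1=0$ or $y_2=0$), since the change of variables $r=\sqrt{2\rho}$ is only a homeomorphism, not a diffeomorphism, at $\rho=0$ — which is precisely why the origin is excised. I would handle this by invoking \cite[Section 3]{pacella-srikanth}, where this identification is established, rather than redoing it here.
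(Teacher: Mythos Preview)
Your proposal is correct and follows essentially the same route as the paper, which carries out the computations of your Steps~2--3 in the paragraphs immediately preceding the lemma statement and then cites \cite[Section 3]{pacella-srikanth} for the correspondence, exactly as you do. (One minor slip in your Step~3 sketch: the chain rule gives $u_r = r\,v_\rho$, not $\tfrac{1}{r}v_\rho$, since $v_\rho = u_r\cdot\tfrac{dr}{d\rho}=u_r/r$; this is harmless because you rightly defer to the already-displayed computation.)
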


We also stress that the changes of variables \eqref{identification 1} and \eqref{identification 3} lead to
\begin{equation}\label{identification 7}
\left.
\begin{array}{r}
 |y_1| = r \cos \theta =  \sqrt{2 \rho} \cos \left( \dfrac{\sigma}{2} \right) = \sqrt{2 |z|} \sqrt{\dfrac{1 + \cos \sigma}{2}}\\
 = \sqrt{|z| + |z| \cos {\sigma}} = \sqrt{| z| +z_{m+1}} \,, \\\\
 \quad |y_2| =  r \sin \theta =  \sqrt{2 \rho} \sin \left( \dfrac{\sigma}{2} \right) = \sqrt{2 |z|} \sqrt{\dfrac{1 - \cos \sigma}{2}}\\
 = \sqrt{|z| - |z| \cos {\sigma}} = \sqrt{| z| - z_{m+1}} \, ,\\\\
 \quad |(y_1, y_2)| = \sqrt{|y_1|^2 + |y_2|^2} = \sqrt{2 |z|} \,.
 \end{array}
 \right\}
\end{equation}

\begin{remark}\label{remark singularity}
 Due to the singularity of $|z|$ at $z=0$, Lemma {\rm\ref{reduced laplacian}} does not hold between doubly symmetric $C^2$-function $u : B_{2m} (0,1)\rt \R$ and the \linebreak axially symmetric with respect to the axis $\R e_{m+1} \con \R^{m+1}$ $C^2$-functions,  $v : B_{m+1}(0,1/2) \rt \R$. Indeed, for each $m\geq 2$ consider
 \[
 u(y_1, y_2) = |(y_1, y_2)|^2, \qquad (y_1, y_2) \in B_{2m}(0,1).
 \]
 Then $u \in C^{\infty}(B_{2m}(0,1))$ and $\Delta_{2m}u(y_1, y_2) = 4m$. The function \linebreak $v:  B_{m+1}(0,1/2) \rt \R$ associated to $u$ is given by
 \[
 v(z_1, \ldots, z_{m}, z_{m+1})= 2 |z|, \qquad z \in  B_{m+1}(0,1/2),
 \]
 which is singular at $z=0$.
\end{remark}

\section{Radially invariant problems and proof of Theorem \ref{theorem henon concentration}}\label{section radial}
In the search of doubly symmetric solutions of 
\begin{equation}\label{radial 2m}
\left\{
\begin{array}{l}
- \Delta_{2m} u = f(|(y_1, y_2)|, u), \quad (y_1, y_2) \in B_{2m}(0,1), \\
u = 0 \quad \text{on} \quad \partial B_{2m}(0,1),
\end{array}
\right.
\end{equation}
we perform the change of variables from Section \ref{section reduction} and we are led  to investigate the existence of axially symmetric, with respect to $\R e_{m+1} \con \R^{m+1}$, solutions of
\begin{equation}\label{radial m+1}
\left\{
\begin{array}{l}
 - \Delta_{m+1} v = \dfrac{f(\sqrt{2|z|}, v)}{2 |z|}, \quad z \in B_{m+1}(0,1/2),\\
 v = 0 \quad \text{on} \quad \partial B_{m+1}(0,1/2).
\end{array}
\right.
\end{equation}

Due to the singularity of $|z|$ at $z=0$ as well as the possible singularity of $f(\sqrt{2|z|}, v)/2 |z|$ at $z= 0$, the claim about the equivalence between problems \eqref{radial 2m} and \eqref{radial m+1} must be careful checked; see Remark \ref{remark singularity} and also  \cite[Theorem 2.3]{ederson-djairo-olimpio-JFA} for a related regularity problem. Note that in \cite{pacella-srikanth} the domains considered are annuli, so the singularity at the origin does not appear. Nevertheless, for the Hénon equation, that is in the case $f(|(y_1, y_2)|, u) = |(y_1, y_2)|^{\alpha}|u|^{p-2}u$, we get it.  Our arguments are based on some regularity results, namely equivalence between weak and classical solutions. In this direction we mention that the classical regularity results as in \cite{agmon-douglis-nirenberg,brezis-kato} does not apply to our problems posed in $\R^{2m}$ since we are working with problems that may be supercritical in the sense that $2 < p < 2(m+1)/(m-1)$ allows $p > 4m/(2m -2)$. 

In order to proceed with \eqref{radial 2m} and \eqref{radial m+1} with $f(|(y_1, y_2)|,u) = |(y_1, y_2)|^{\alpha} |u|^{p-2}u$, that is,
\begin{equation}\label{radial 2m henon}
\left\{
\begin{array}{l}
- \Delta_{2m} u = |(y_1, y_2)|^{\alpha} |u|^{p-2}u, \quad (y_1, y_2) \in B_{2m}(0,1), \\
u = 0 \quad \text{on} \quad \partial B_{2m}(0,1),
\end{array}
\right.
\end{equation}
and 
\begin{equation}\label{radial m+1 henon}
\left\{
\begin{array}{l}
 - \Delta_{m+1} v = |2z|^{\frac{\alpha-2}{2}}|v|^{p-2}v, \quad z \in B_{m+1}(0,1/2),\\
 v = 0 \quad \text{on} \quad \partial B_{m+1}(0,1/2).
\end{array}
\right.
\end{equation}
we need to introduce some notation. Also observe that if we write
\begin{equation}\label{half to complete}
w(z) = \left(\frac{1}{4}\right)^{1/(p-2)}v\left(  \dfrac{z}{2} \right),
\end{equation}
then $v$ is a solution of \eqref{radial m+1 henon} if, and only if, $w$ is a solution of 
\begin{equation}\label{radial m+1 henon unit}
\left\{
\begin{array}{l}
 - \Delta_{m+1} w = |z|^{\frac{\alpha-2}{2}}|w|^{p-2}w, \quad z \in B_{m+1}(0,1),\\
 w = 0 \quad \text{on} \quad \partial B_{m+1}(0,1).
\end{array}
\right.
\end{equation}

\begin{definition}
 Assume $m \geq2$,  $2 < p < \frac{2(m+1)}{m-1}$ and set
 \begin{multline*}
\mathcal{H}_m : = \{u \in H^1_0(B_{2m}(0,1));  \ u(g_1y_1, g_2 y_2) = u(y_1, y_2), \\ \ \forall \, (y_1,y_2) \in B_{2m}(0,1), \ \forall \, g=(g_1, g_2) \in \mathcal{G}_m \},
 \end{multline*}
 with $\mathcal{G}_m$ as defined in Section {\rm \ref{section reduction}}. Then, cf. \cite[Theorem 2.1 and Corollary 2.3]{badiale-serra}, there exists $\alpha_0 = \alpha_{0}(p,m) > 4$ such that $\mathcal{H}_m$ is compactly imbedded in $L^{p}(B_{2m}(0,1), |(y_1, y_2)|^{\alpha})$ for every $\alpha > \alpha_0$. Assume $\alpha > \alpha_0$.
\begin{enumerate}[{\rm1.}]

\item We say that $U$ is a weak doubly symmetric solutions of \eqref{radial 2m henon} if $U$ is a critical point  of the $C^1(\mathcal{H}_m, \R)$-functional
\[
I_m (u) = \dfrac{1}{2}\int_{B_{2m}(0,1)} \!\!\!\!\!\!\!\!\!\!   |\nabla u|^2 d(y_1,y_2) - \dfrac{1}{p}\int_{B_{2m}(0,1)}\!\!\!\!\! \!\!\!\!\!  |(y_1, y_2)|^{\alpha} | u|^{p} d(y_1,y_2), \quad u \in \mathcal{H}_m.
\]

 \item We say that $u_{\alpha} \in \mathcal{H}_m$ is a least energy solution among the doubly \linebreak symmetric solutions of \eqref{radial 2m henon} if $u_{\alpha}$ is a nontrivial doubly symmetric solution of \eqref{radial 2m henon} and
\[
I_{m}(u_{\alpha}) = \min \{ I_{m}(u); \; u \; \text{is a nontrivial doubly symmetric sol. of \eqref{radial 2m henon}}  \}.
\]

\item We say that $W$ is a weak solution of \eqref{radial m+1 henon unit} if $W$ is a critical point  of the $C^1(H^1_0(B_{m+1}(0,1), \R)$-functional
\[
J(w) = \dfrac{1}{2}\int_{B_{m+1}(0,1)}\!\!\!\!\!\!\!\!\!\! |\nabla w|^2 dz - \dfrac{1}{p}\int_{B_{m+1}(0,1)} \!\!\!\!\!\!\!\!\!\!|z|^{\frac{\alpha-2}{2}} | w|^{p} dz, \quad w \in H^1_0(B_{m+1}(0,1)).
\]

\item We say that $w_{\alpha} \in H^1_0(B_{m+1}(0,1))$ is a least energy solution of \eqref{radial m+1 henon unit} if $w_{\alpha}$ is a nontrivial solution of \eqref{radial m+1 henon unit} and
\[
J(w_{\alpha}) = \min \{ J(w); \; w \; \text{is nontrivial solution of \eqref{radial m+1 henon unit}}  \}.
\]

\end{enumerate}
\end{definition}

\begin{lemma}[{\bf{\cite[Propositions 5.4 and 5.5]{ederson-djairo-olimpio-JFA}}}]\label{lemma deo}
 Assume $m \geq2$ and  $2 < p < \frac{2(m+1)}{m-1}$. There exists $\alpha_0 = \alpha_{0}(p,m) > 4$ such that for every $\alpha >\alpha_0$, $u$ is a weak doubly symmetric solution of \eqref{radial 2m henon} if, and only if, $u$ is a classical doubly symmetric solutions of \eqref{radial 2m henon}. In this case, $u \in C^{2, \gamma}(\overline{B}_{2m}(0,1))$ for all $0 < \gamma < 1$.
\end{lemma}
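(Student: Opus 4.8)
The plan is to transfer the problem, through the reduction of Section~\ref{section reduction}, to dimension $m+1$, where it is subcritical, to gain there an $L^\infty$ bound, and then to finish the regularity argument back in dimension $2m$. The implication ``classical $\Rightarrow$ weak'' is immediate: a classical doubly symmetric solution is bounded on $\overline{B}_{2m}(0,1)$, hence belongs to $\mathcal{H}_m$, and an integration by parts against test functions in $\mathcal{H}_m$ makes it a critical point of $I_m$. The content is the converse together with the regularity statement.

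First I would make the reduction quantitative at the level of the functionals. With the changes of variables \eqref{identification 1} and \eqref{identification 3}, the relations \eqref{identification 7}, and the Jacobian of $(y_1,y_2)\mapsto z$ (smooth off the Lebesgue-negligible set $\{y_1=0\}\cup\{y_2=0\}$), a direct change of variables in the integrals gives, for $u$ and $v$ related by \eqref{identification 5},
\[
\int_{B_{2m}(0,1)}|\nabla u|^2\,d(y_1,y_2)=|S^{m-1}|\int_{B_{m+1}(0,1/2)}|\nabla v|^2\,dz,
\]
\[
\int_{B_{2m}(0,1)}|(y_1,y_2)|^{\alpha}|u|^{p}\,d(y_1,y_2)=|S^{m-1}|\int_{B_{m+1}(0,1/2)}|2z|^{\frac{\alpha-2}{2}}|v|^{p}\,dz,
\]
the points $(y_1,y_2)=0$ and $z=0$ being negligible, of zero $H^1$-capacity. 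Combined with the rescaling \eqref{half to complete}, this produces a linear bijection $T$ from $\mathcal{H}_m$ onto $\{w\in H^1_0(B_{m+1}(0,1)): w\ \text{axially symmetric with respect to}\ \R e_{m+1}\}$ with $I_m(u)=c\,J(Tu)$ for a constant $c=c(p,m)>0$. Hence $u$ is a weak doubly symmetric solution of \eqref{radial 2m henon} if and only if $w=Tu$ is a critical point of $J$ restricted to that axially symmetric subspace; since $J$ is invariant under the orthogonal group acting on $(z_1,\dots,z_m)$, the principle of symmetric criticality then ensures that $w$ is a genuine weak solution of \eqref{radial m+1 henon unit}, in accordance with the pointwise identity \eqref{identification 6} that turns \eqref{radial 2m henon} into \eqref{radial m+1 henon}.

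Next I would exploit that \eqref{radial m+1 henon unit} is subcritical in dimension $m+1$: one has $2<p<2(m+1)/(m-1)$, which is the Sobolev exponent in dimension $m+1$, and the weight $|z|^{(\alpha-2)/2}$ is bounded on $B_{m+1}(0,1)$ since $\alpha>4$. Writing the right-hand side as $\big(|z|^{(\alpha-2)/2}|w|^{p-2}\big)\,w$, where $|w|^{p-2}\in L^{q}$ for some $q>(m+1)/2$ by the strict subcriticality and the Sobolev embedding, a Brezis--Kato / Moser iteration (cf.\ \cite{brezis-kato}) gives $w\in L^{s}(B_{m+1}(0,1))$ for every $s<\infty$; $L^{s}$ elliptic estimates (cf.\ \cite{agmon-douglis-nirenberg}) then give $w\in W^{2,s}(B_{m+1}(0,1))$, so $w\in L^\infty(B_{m+1}(0,1))$. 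Transporting back through $T$, $u\in L^\infty(B_{2m}(0,1))$, and therefore $f:=|(y_1,y_2)|^{\alpha}|u|^{p-2}u\in L^\infty(B_{2m}(0,1))$.

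It remains to promote $u$ to a classical solution on the whole of $B_{2m}(0,1)$; because of the singularity exhibited in Remark~\ref{remark singularity} I would argue directly in dimension $2m$ rather than transporting regularity back. A symmetrization argument first shows that $u$ solves $-\Delta_{2m}u=f$ in $\mathcal{D}'(B_{2m}(0,1))$ against \emph{all} test functions: for $\varphi\in C^\infty_c(B_{2m}(0,1))$ its average $\bar\varphi$ over $\mathcal{G}_m$ lies in $\mathcal{H}_m$, and since $u$ and $f$ are $\mathcal{G}_m$-invariant, $\int\nabla u\cdot\nabla\varphi=\int\nabla u\cdot\nabla\bar\varphi=\int f\bar\varphi=\int f\varphi$ (the outer equalities by the invariance, the middle one by the weak formulation in $\mathcal{H}_m$ and the bound just obtained). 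Then $f\in L^\infty$ gives, by $L^q$-estimates, $u\in W^{2,q}_{loc}$ for every $q<\infty$, hence $u\in C^{1,\gamma}$; consequently $f\in C^{0,\gamma}(\overline{B}_{2m}(0,1))$ (the weight $|(y_1,y_2)|^{\alpha}$ is globally $C^{0,\gamma}$ because $\alpha>1$, and $t\mapsto|t|^{p-2}t$ is $C^1$ for $p>2$), and interior plus boundary Schauder estimates (smooth boundary, $u=0$ on it) give $u\in C^{2,\gamma}(\overline{B}_{2m}(0,1))$ for every $0<\gamma<1$; in particular $u$ is a classical doubly symmetric solution. I expect the main obstacle to be the rigorous bookkeeping of the reduction in the second step — that $T$ is a bijection onto the axially symmetric $H^1_0$-functions and matches the weak formulations (the energy identities together with symmetric criticality), with the origin, where the $C^2$-correspondence of Lemma~\ref{reduced laplacian} fails, handled separately; everything downstream is routine elliptic bootstrap, the only subtlety being to check that the range $2<p<2(m+1)/(m-1)$ really closes the iteration in dimension $m+1$ — it permits $p>4m/(2m-2)$, so \eqref{radial 2m henon} may be supercritical in dimension $2m$, which is exactly why the reduction is needed.
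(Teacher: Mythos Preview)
The paper does not prove this lemma: it is quoted verbatim from \cite[Propositions~5.4 and~5.5]{ederson-djairo-olimpio-JFA} and used as a black box, in particular as an ingredient in the proof of Proposition~\ref{equivalence henon}. Your proposal therefore supplies a genuine argument where the paper supplies only a citation.

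Your route is correct and is, in a sense, the paper's own reduction turned around. The paper's logic runs: \emph{assume} Lemma~\ref{lemma deo} (regularity of weak doubly symmetric solutions, imported from \cite{ederson-djairo-olimpio-JFA}), then prove Proposition~\ref{equivalence henon} (the $C^2$ bijection between $X$ and $Y$). You instead set up the reduction at the $H^1$ level first, deduce from subcriticality in dimension $m+1$ that $w=Tu\in L^\infty$, pull this back to $u\in L^\infty$, and bootstrap in dimension $2m$; this yields Lemma~\ref{lemma deo} and, as a by-product, the hard direction of Proposition~\ref{equivalence henon} simultaneously. What your approach buys is self-containment: no appeal to the regularity theory of \cite{ederson-djairo-olimpio-JFA} is needed. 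What the paper's approach buys is modularity: the regularity result in \cite{ederson-djairo-olimpio-JFA} is proved there by weighted Sobolev embeddings for symmetric functions and applies to a broader class of problems, without reference to any dimensional reduction.

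The steps you flag as routine are indeed so: the energy identities you wrote down are exactly right (the Jacobian gives $d(y_1,y_2)=\tfrac{|S^{m-1}|}{2|z|}\,dz$, cancelling the factor $2|z|$ in $|\nabla u|^2=2|z|\,|\nabla v|^2$); the Brezis--Kato step closes because $p<2(m+1)/(m-1)$ forces $|w|^{p-2}\in L^q$ with $q>(m+1)/2$; and the final Schauder step works since for $\alpha>1$ the weight is Lipschitz and $t\mapsto|t|^{p-2}t$ is $C^1$. The one point that needs genuine care, which you correctly identify, is establishing that \eqref{identification 5} extends to an isomorphism $\mathcal{H}_m\to\{w\in H^1_0(B_{m+1}(0,1)):\ w\text{ axially symmetric}\}$ intertwining the weak formulations, the $C^2$ correspondence of Lemma~\ref{reduced laplacian} being stated only away from the origin; this follows by density once the energy identities are in hand, the singular sets having zero $H^1$-capacity.
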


\begin{proposition}\label{equivalence henon}
Assume $m \geq2$,  $2 < p < \frac{2(m+1)}{m-1}$. Then there exists $\alpha_0 = \alpha_{0}(p,m) > 4$ such that for every $\alpha > \alpha_0$, \eqref{identification 5} provides a bijective correspondence between  
\[
X = \{ u \in C^2(\overline{B}_{2m}(0,1)); \; u \; \text{is a doubly symmetric classical solution of} \; \eqref{radial 2m henon} \}
\]
and
\begin{multline*}
Y = \left\{ v \in C^2(\overline{B}_{m+1}(0,1/2)); \; v \; \text{is an axially symmetric}, \right.\\
\left. \text{w.r.t. $\R e_{m+1} \con \R^{m+1}$, classical solution of} \; \eqref{radial m+1 henon} \right\}.
\end{multline*}
In addition, any $u \in X$ and any $v \in Y$ are such that $u \in C^{2, \gamma}(\overline{B}_{2m}(0,1))$, $v \in C^{2, \gamma}(\overline{B}_{m+1}(0,1/2))$ for all $0 < \gamma < 1$.
\end{proposition}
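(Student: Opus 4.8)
The plan is to establish the bijection in \eqref{identification 5} by combining the pointwise change-of-variables identity from Lemma \ref{reduced laplacian} with the regularity bootstrap of Lemma \ref{lemma deo}, the latter guaranteeing that weak doubly symmetric solutions of \eqref{radial 2m henon} are classical and lie in $C^{2,\gamma}(\overline{B}_{2m}(0,1))$. The subtlety, already flagged in Remark \ref{remark singularity} and in the discussion preceding \eqref{radial m+1 henon}, is that the correspondence of Lemma \ref{reduced laplacian} is only \emph{a priori} valid away from the origin, so the real content of the proposition is handling the singular point $z=0$ in both directions.

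\textbf{From $X$ to $Y$.} Starting with $u \in X$, I would first invoke Lemma \ref{lemma deo} to know $u \in C^{2,\gamma}(\overline{B}_{2m}(0,1))$. Define $v$ on $\overline{B}_{m+1}(0,1/2) \menos \{0\}$ via \eqref{identification 5}; by Lemma \ref{reduced laplacian} it is axially symmetric and $C^2$ there, and the identity \eqref{identification 6} together with the equation for $u$ gives $-\Delta_{m+1} v = |2z|^{(\alpha-2)/2}|v|^{p-2}v$ on $B_{m+1}(0,1/2)\menos\{0\}$, i.e. $v$ solves \eqref{radial m+1 henon} in the punctured ball. The point $z=0$ corresponds to the sphere $\{|(y_1,y_2)|=0\}$, i.e. the single point $(0,0)\in\R^{2m}$, so $v$ extends continuously to $z=0$ by setting $v(0) = u(0,0)$, and $v$ is bounded near $0$. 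Since $\alpha > \alpha_0 > 4$, the right-hand side $|2z|^{(\alpha-2)/2}|v|^{p-2}v$ is bounded (indeed Hölder continuous) near $z=0$; the origin is a single point, hence of zero $H^1$-capacity in dimension $m+1 \geq 3$, so the bounded function $v$ is in fact a weak $H^1$ solution across $0$ (removable singularity for $H^1$ functions with bounded data). Elliptic regularity (Schauder, using that the RHS is $C^{0,\gamma}$ locally since $|2z|^{(\alpha-2)/2}$ with exponent $(\alpha-2)/2 > 1$ is at least $C^{1}$ away from where it could fail and $C^{0,\gamma}$ everywhere) then upgrades $v$ to $C^{2,\gamma}(\overline{B}_{m+1}(0,1/2))$, so $v \in Y$.

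\textbf{From $Y$ to $X$.} Conversely, given $v \in Y$, define $u$ on $B_{2m}(0,1)\menos\{0\}$ by \eqref{identification 5}; Lemma \ref{reduced laplacian} makes it doubly symmetric and $C^2$ off the origin, and \eqref{identification 6} shows it solves \eqref{radial 2m henon} there. Again $u$ extends continuously (hence boundedly) to $(0,0)$ because $v$ is continuous at $z=0$ and $|(y_1,y_2)|^2 = 2|z| \to 0$. Here one must check $u$ is a \emph{weak} doubly symmetric solution in $\mathcal{H}_m$: boundedness of $u$ plus $u\in H^1$ across the single point $(0,0)$ (zero capacity in dimension $2m \geq 4$) gives $u \in H^1_0(B_{2m}(0,1))$, the $\mathcal{G}_m$-invariance is immediate, and the bounded, compactly-controlled nonlinearity $|(y_1,y_2)|^\alpha|u|^{p-2}u \in L^\infty$ lets one verify the weak formulation by testing and removing the origin via a capacity cutoff. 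Then Lemma \ref{lemma deo} promotes $u$ to a classical solution in $C^{2,\gamma}(\overline{B}_{2m}(0,1))$, so $u \in X$. Finally, \eqref{identification 5} is manifestly an involution between the two punctured-domain function classes, so these two constructions are mutual inverses, giving the bijection; and the regularity statements $u \in C^{2,\gamma}$, $v \in C^{2,\gamma}$ are exactly what was proved along the way.

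\textbf{Main obstacle.} The crux is the removable-singularity argument at the origin in both directions: one must argue that a bounded solution of the respective equation on the punctured ball, with locally bounded (indeed Hölder) right-hand side, is automatically a solution across the puncture, and then that Schauder/Lemma \ref{lemma deo} regularity applies globally. The dimensional hypotheses $2m \geq 4$ and $m+1 \geq 3$ are what make a point a set of zero $H^1$-capacity, and the condition $\alpha > \alpha_0 > 4$ is what keeps the transformed weight $|2z|^{(\alpha-2)/2}$ (exponent $> 1$) from introducing any new singularity — these are the facts I would isolate as lemmas before assembling the proof. One should also double-check, when passing $u \leadsto v$, that axial symmetry of $v$ with respect to $\R e_{m+1}$ genuinely captures \emph{all} the freedom, i.e. that double symmetry of $u$ is equivalent to $v$ depending only on $(\rho,\sigma)=(|z|, \text{polar angle})$, which is precisely the content of Lemma \ref{reduced laplacian}.
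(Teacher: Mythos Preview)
Your proposal is correct and reaches the same conclusion as the paper, but the mechanism you use to cross the origin differs from the one the authors use.

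The paper's proof stays at the weak/variational level throughout. For $u\in X$, it simply observes that a classical solution is a weak doubly symmetric solution; then it performs the change of variables \emph{inside the integral weak formulation} (invoking Palais's principle of symmetric criticality to pass from the axially-symmetric test space to the full $H^1_0(B_{m+1}(0,1/2))$), obtaining directly that $v$ is a weak solution on the full ball. The singular point $z=0$ never needs explicit attention because a single point has measure zero in the integrals. Regularity of $v$ then follows from the standard subcritical bootstrap (Agmon--Douglis--Nirenberg), since $2<p<\frac{2(m+1)}{m-1}$ is subcritical in dimension $m+1$. The $Y\to X$ direction is symmetric: change variables in the weak formulation to get $u$ a weak doubly symmetric solution, then apply Lemma~\ref{lemma deo}.

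Your route is more pointwise: you use Lemma~\ref{reduced laplacian} to get the PDE on the punctured domain, then invoke a removable-singularity argument (boundedness plus zero $H^1$-capacity of a point in dimension $\geq 3$) to extend across $0$, and bootstrap. This is perfectly valid, and in fact makes more transparent exactly where $\alpha>4$ enters (to keep the transformed weight $|2z|^{(\alpha-2)/2}$ bounded and H\"older near $z=0$) and where the dimensional hypotheses $m+1\geq 3$, $2m\geq 4$ are used. The paper's approach is terser and avoids the removable-singularity machinery altogether by never leaving the weak framework, at the cost of leaving implicit the check that the change of variables carries $\mathcal{H}_m$ to the axially symmetric subspace of $H^1_0(B_{m+1}(0,1/2))$ and preserves the Euler--Lagrange identity. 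One small point in your write-up: the Schauder step in the $X\to Y$ direction needs the usual intermediate $W^{2,q}\hookrightarrow C^{1,\gamma}$ bootstrap before the right-hand side becomes H\"older, since a priori $v$ is only bounded; this is routine but worth stating.
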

\begin{proof}
Let $u \in X$. Then $u$ is a weak doubly symmetric solution of \eqref{radial 2m henon}. So, after changing variables, we get that the function $v$, associated to $u$ by \eqref{identification 5}, is a weak solution of \eqref{radial m+1 henon} in the sense of $H^1_0(B_{m+1}(0,1/2))$; we have also used the classical result of Palais \cite{palais}. Hence, since we have subcritical growth for the problem posed in $B_{m+1}(0,1/2) \con \R^{m+1}$, we apply \cite{agmon-douglis-nirenberg} to get that  $v$ is a classical solution of \eqref{radial m+1 henon}.

On the other hand, let $v \in Y$. Then, after changing variables, we get that the function $u$, associated to $v$ by \eqref{identification 5}, is a weak doubly symmetric solution of \eqref{radial 2m henon}, hence classical by Lemma \ref{lemma deo}. 
\end{proof}

We mention that is proved in \cite{badiale-serra} that the Hénon equation has doubly symmetric solutions, that are non radially symmetric, in case $2 < p < 2(m+1)/(m-1)$ and $\alpha$ is sufficiently large.

Now, from \cite{pacella,smets-su-willem,smets-willem,BW,cao-peng-yan}, we collect  some results about the least energy solutions of \eqref{radial m+1 henon unit}.

\begin{proposition}\label{results about henon}
 Assume $m \geq2$,  $2 < p < \frac{2(m+1)}{m-1}$. Then there exists $\alpha_0 = \alpha_{0}(p,m) > 4$ such that for each $\alpha > \alpha_0$, any least energy solution $w_{\alpha}$ of \eqref{radial m+1 henon unit} (up to rotation and up to replacing $w_{\alpha}$ by $-w_{\alpha}$) is such that:
\begin{enumerate}[(i)]
 \item $w_{\alpha} > 0$ in $B_{m +1}(0,1)$; $w_{\alpha}$ is not radially symmetric; $w_{\alpha}$ is Schwarz foliated w.r.t. the vector $e_{m+1} \in \R^{m+1}$, in particular $w_{\alpha}$ is axially \linebreak symmetric w.r.t. $\R e_{m+1} \con \R^{m+1}$.
 \item $w_{\alpha}$ concentrates at the point $(0,\ldots, 0,1)$ as $\alpha \rt \infty$. In addition, let $0 < \tau_{\alpha} < 1$ be such that
 \[
 \mathcal{M}'_{\alpha}  = \max_{z \in B_{m+1}(0,1)} w_{\alpha}(z) = w_{\alpha}((0, \ldots, 0,\tau_{\alpha})).
 \]
 Then $\alpha (1- \tau_{\alpha}) \rt \ell$ for some positive number $l$ and $\mathcal{M}'_{\alpha} \approx \alpha^{2/(p-2)}$ as $\alpha \rt \infty$.
\end{enumerate}
\end{proposition}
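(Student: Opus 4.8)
The plan is to reduce \eqref{radial m+1 henon unit} to a classical H\'enon equation and then to assemble the results of \cite{pacella,smets-su-willem,smets-willem,BW,cao-peng-yan}. Put $\beta=\beta(\alpha):=\frac{\alpha-2}{2}$, so that \eqref{radial m+1 henon unit} reads
\[
-\Delta_{m+1}w=|z|^{\beta}|w|^{p-2}w \ \text{ in }\ B_{m+1}(0,1),\qquad w=0 \ \text{ on }\ \partial B_{m+1}(0,1),
\]
which is the H\'enon equation in the ball $B_{m+1}(0,1)\con\R^{m+1}$ with radial weight $|z|^{\beta}$ and exponent $2<p<\frac{2(m+1)}{m-1}$, subcritical since $\frac{2(m+1)}{m-1}$ is the critical Sobolev exponent in dimension $m+1$. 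Since $\beta\rt\infty$ if and only if $\alpha\rt\infty$ and $\beta/\alpha\rt\tfrac12$, every asymptotic statement below, originally phrased for the H\'enon parameter $\beta\rt\infty$, translates verbatim into one for $\alpha\rt\infty$, with constants rescaled by harmless factors of $2$.

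To obtain (i) I would first produce $w_\alpha$ as a ground state: by the compact embedding $H^1_0(B_{m+1}(0,1))\hookrightarrow L^p(B_{m+1}(0,1),|z|^\beta\,dz)$ the functional $J$ satisfies the Palais--Smale condition and has a mountain pass critical point realizing the least energy level $c:=\min_{\mathcal N}J$ on the Nehari manifold $\mathcal N$. A least energy solution cannot change sign, since for a solution $w$ one has $J(w)=J(w^+)+J(w^-)$ and, if $w^\pm\neq0$, testing the equation with $w^\pm$ gives $w^\pm\in\mathcal N$, whence $J(w)\ge 2c>c$. Thus, replacing $w_\alpha$ by $-w_\alpha$ if necessary, $w_\alpha\ge0$, and then $w_\alpha\in C^2$ by \cite{agmon-douglis-nirenberg} and $w_\alpha>0$ in $B_{m+1}(0,1)$ by the strong maximum principle. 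By \cite{pacella,smets-willem} the ground state $w_\alpha$ is foliated Schwarz symmetric with respect to some unit vector $e\in\R^{m+1}$; up to a rotation we take $e=e_{m+1}$, so $w_\alpha$ is axially symmetric with respect to $\R e_{m+1}\con\R^{m+1}$ and nonincreasing in the polar angle from $e_{m+1}$. Finally, the symmetry breaking result of \cite{smets-su-willem} (see also \cite{BW}) shows that for $\beta$ large the radial solution has strictly larger energy than $c$, so, enlarging $\alpha_0$ if needed, $w_\alpha$ is not radial for $\alpha>\alpha_0$.

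For (ii) I would invoke the asymptotic analysis of H\'enon ground states as $\beta\rt\infty$ in \cite{BW,cao-peng-yan} (and \cite{smets-su-willem}). Combined with the foliated Schwarz symmetry, this gives that $w_\alpha$ attains its maximum at a single point on the axis, $(0,\ldots,0,\tau_\alpha)$ with $0<\tau_\alpha<1$, and concentrates at $(0,\ldots,0,1)$ as $\alpha\rt\infty$. The rates come from the rescaling $w_\alpha(z)=\mathcal M'_\alpha\,W_\alpha\big((z-(0,\ldots,0,\tau_\alpha))/\delta_\alpha\big)$: balancing $-\Delta$ against $|z|^\beta|w|^{p-2}w$ and using $|z|^\beta\approx\tau_\alpha^{\beta}e^{\beta\delta_\alpha\zeta_{m+1}/\tau_\alpha}$ near the maximum forces $\delta_\alpha\sim\beta^{-1}$ (so $\beta\delta_\alpha$ stays bounded), and then $\delta_\alpha^{-2}\sim(\mathcal M'_\alpha)^{p-2}\tau_\alpha^{\beta}$ with $\tau_\alpha^{\beta}\rt$ a positive constant, which is exactly where $\beta(1-\tau_\alpha)$ is forced to stay bounded. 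One concludes $\mathcal M'_\alpha\approx\beta^{2/(p-2)}\approx\alpha^{2/(p-2)}$ and $\beta(1-\tau_\alpha)\rt L$ for some $L>0$, whence $\alpha(1-\tau_\alpha)=\frac{\alpha}{\beta}\,\beta(1-\tau_\alpha)\rt 2L=:\ell>0$.

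The main difficulty is not a single delicate estimate but the bookkeeping in this assembly. One must verify that the results quoted from \cite{pacella,smets-su-willem,smets-willem,BW,cao-peng-yan}, stated there for the least energy solution in the full space $H^1_0$ and with possibly different normalizations, apply verbatim to the radial weight $|z|^\beta$; that the foliated Schwarz symmetry is precisely what pins the concentration point to the axis $\R e_{m+1}$, up to rotation; and that the height $\mathcal M'_\alpha$ and the distance $1-\tau_\alpha$ of the maximum to the boundary are governed by the \emph{same} rescaling parameter $\delta_\alpha$, so that $\alpha(1-\tau_\alpha)\rt\ell$ and $\mathcal M'_\alpha\approx\alpha^{2/(p-2)}$ hold simultaneously and with a genuinely positive limit $\ell$.
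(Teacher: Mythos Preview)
Your proposal is correct and matches the paper's approach exactly: the paper does not give a proof of this proposition at all, but simply introduces it with the phrase ``Now, from \cite{pacella,smets-su-willem,smets-willem,BW,cao-peng-yan}, we collect some results about the least energy solutions of \eqref{radial m+1 henon unit}'' and then states the proposition. Your write-up is a faithful and more detailed unpacking of precisely this assembly, including the identification $\beta=(\alpha-2)/2$ that turns \eqref{radial m+1 henon unit} into the standard H\'enon problem to which the cited references apply, and the trivial conversion of the $\beta$-asymptotics into $\alpha$-asymptotics.
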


\begin{proposition}\label{relation least energy solutions}
Assume $m \geq2$,  $2 < p < \frac{2(m+1)}{m-1}$. Then there exists $\alpha_0 = \alpha_{0}(p,m) > 4$ such that for each $\alpha > \alpha_0$, $u_{\alpha}$ is a least energy solution among the doubly symmetric solutions of \eqref{radial 2m henon} if, and only if, $w_{\alpha}$ is a least energy solution of \eqref{radial m+1 henon unit} and $u_{\alpha}$ and $w_{\alpha}$ are related by \eqref{identification 5} and \eqref{half to complete}.
\end{proposition}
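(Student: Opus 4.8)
\emph{Overall plan.} The idea is to transport the two minimization problems into one another through the correspondence of Proposition~\ref{equivalence henon} composed with the rescaling~\eqref{half to complete}, and then to remove the axial--symmetry constraint on the $(m+1)$-dimensional side by invoking the symmetry of the ground states recorded in Proposition~\ref{results about henon}. Throughout, $\alpha_0(p,m)$ is taken larger than all thresholds appearing in Propositions~\ref{equivalence henon} and~\ref{results about henon}. \emph{Step 1 (a bijection preserving nontriviality).} The map $u\mapsto v$ given by~\eqref{identification 5} is the bijection $X\to Y$ of Proposition~\ref{equivalence henon}, and $v\mapsto w$ given by~\eqref{half to complete} is an explicit rescaling which, as observed right after~\eqref{half to complete}, sends solutions of~\eqref{radial m+1 henon} to solutions of~\eqref{radial m+1 henon unit} and clearly preserves axial symmetry with respect to $\R e_{m+1}$. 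Hence $u\mapsto w$ is a bijection between the nontrivial doubly symmetric classical solutions of~\eqref{radial 2m henon} and the nontrivial classical solutions of~\eqref{radial m+1 henon unit} that are axially symmetric with respect to $\R e_{m+1}$.

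\emph{Step 2 (the energies agree up to a fixed constant).} For a solution $u$ of~\eqref{radial 2m henon}, testing the equation with $u$ gives $I_m(u)=\bigl(\tfrac12-\tfrac1p\bigr)\int_{B_{2m}(0,1)}|(y_1,y_2)|^{\alpha}|u|^{p}\,d(y_1,y_2)$, and likewise $J(w)=\bigl(\tfrac12-\tfrac1p\bigr)\int_{B_{m+1}(0,1)}|z|^{(\alpha-2)/2}|w|^{p}\,dz$, so it suffices to compare the two potential integrals. Writing the left one successively in the coordinates $(r,\theta)$, $(\rho,\sigma)$ and $z$ of Section~\ref{section reduction}, one computes the Jacobian of the change of variables restricted to doubly symmetric functions, namely $d(y_1,y_2)=c_m\,|z|^{-1}\,dz$ for a positive dimensional constant $c_m$; combining this with $|(y_1,y_2)|^{\alpha}=(2|z|)^{\alpha/2}$ from~\eqref{identification 7} and with the rescaling~\eqref{half to complete} yields $J(w)=c(p,m)\,I_m(u)$ for some constant $c(p,m)>0$ depending only on $p,m$ (in particular independent of $\alpha$). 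This computation is routine but must be unwound with some care.

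\emph{Step 3 (dropping the symmetry constraint).} Let $\mathcal{L}_{m+1}$ and $\mathcal{L}_{m+1}^{\mathrm{ax}}$ denote the infimum of $J$ over all nontrivial solutions of~\eqref{radial m+1 henon unit} and over the axially symmetric ones, respectively; trivially $\mathcal{L}_{m+1}^{\mathrm{ax}}\ge\mathcal{L}_{m+1}$. For the reverse inequality I would use that a least energy solution of~\eqref{radial m+1 henon unit} exists (standard, since $2<p<2(m+1)/(m-1)$ is subcritical in $\R^{m+1}$) and that, by Proposition~\ref{results about henon}(i), up to a rotation and a sign change — both of which leave $J$ invariant — it is axially symmetric with respect to $\R e_{m+1}$; hence the global least energy level is attained at an axially symmetric solution, so $\mathcal{L}_{m+1}^{\mathrm{ax}}=\mathcal{L}_{m+1}$. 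Combined with Steps~1--2 this gives $c(p,m)\cdot\inf\{I_m(u):u\text{ nontrivial doubly symm.\ sol.\ of }\eqref{radial 2m henon}\}=\mathcal{L}_{m+1}$.

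\emph{Step 4 (conclusion and the main difficulty).} If $u_\alpha$ is a least energy doubly symmetric solution of~\eqref{radial 2m henon}, let $w_\alpha$ be its image under~\eqref{identification 5} and~\eqref{half to complete}; by Steps~1--3 it is a nontrivial solution of~\eqref{radial m+1 henon unit} with $J(w_\alpha)=c(p,m)I_m(u_\alpha)=\mathcal{L}_{m+1}$, i.e.\ a least energy solution, and it is related to $u_\alpha$ by~\eqref{identification 5} and~\eqref{half to complete} by construction. Conversely, if $w_\alpha$ is a least energy solution of~\eqref{radial m+1 henon unit} related to $u_\alpha$ by~\eqref{identification 5} and~\eqref{half to complete}, then $w_\alpha$ is in particular axially symmetric, so $u_\alpha$ is a nontrivial doubly symmetric solution of~\eqref{radial 2m henon} with $I_m(u_\alpha)=c(p,m)^{-1}J(w_\alpha)=c(p,m)^{-1}\mathcal{L}_{m+1}=\inf I_m$, i.e.\ a least energy one. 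The only genuine obstacle is Step~3 — the passage from the minimization over \emph{all} solutions of~\eqref{radial m+1 henon unit} to the minimization over the axially symmetric ones, which is exactly what the symmetry result of Proposition~\ref{results about henon} is needed for; the energy identity of Step~2, once the change of variables of Section~\ref{section reduction} is carried out, is elementary.
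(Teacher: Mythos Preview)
Your proposal is correct and follows the same route the paper has in mind---the change of variables of~\eqref{identification 5} composed with the rescaling~\eqref{half to complete}---which is literally all the paper's one-line proof says. Your write-up is considerably more careful: the energy comparison in Step~2 (with the $|z|^{-1}$ Jacobian and the $\alpha$-independent constant $c(p,m)$) is exactly the computation the paper suppresses, and, more importantly, your Step~3 makes explicit a point the paper leaves tacit, namely that the bijection of Proposition~\ref{equivalence henon} only reaches the \emph{axially symmetric} solutions of~\eqref{radial m+1 henon unit}, while the least-energy level in item~(4) of the Definition is taken over \emph{all} solutions, so one needs the foliated Schwarz symmetry of ground states (Proposition~\ref{results about henon}(i)) to equate the two levels. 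That is not a different approach, just a gap you have filled.
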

\begin{proof}
It follows from the change of variables involving $u_{\alpha}$ and $w_{\alpha}$ by means of \eqref{identification 5} and \eqref{half to complete}. 
\end{proof}

\begin{corollary} \label{doubly symmetric solutions henon}
 Assume $m \geq2$,  $2 < p < \frac{2(m+1)}{m-1}$.  Then there exists $\alpha_0 = \alpha_{0}(p,m) > 4$ such that for each $\alpha > \alpha_0$ the following holds. Let $u_{\alpha}$ be a least energy solution among the doubly symmetric solutions of \eqref{radial 2m henon}. Then, up to replacing $u_{\alpha}$ by $-u_{\alpha}$, one has $u_{\alpha} > 0$ in $B_{2m}(0,1)$, $u_{\alpha}$ is not radially symmetric, there exists $0 < r_{\alpha} < 1$ and $\theta_{*} \in \left\{ 0 , \dfrac{\pi}{2} \right\}$ such that
 \[
 \max_{(y_1, y_2) \in B_{2m}(0,1)} u_{\alpha}(y_1, y_2) = u_{\alpha}(r_{\alpha}, \theta_{*}).
 \]
\end{corollary}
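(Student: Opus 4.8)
The plan is to read off every assertion of the corollary from the reduced $(m+1)$-dimensional problem \eqref{radial m+1 henon unit} by means of the dictionary set up in Propositions \ref{equivalence henon} and \ref{relation least energy solutions}. Given a least energy doubly symmetric solution $u_{\alpha}$ of \eqref{radial 2m henon}, Proposition \ref{relation least energy solutions} produces a least energy solution $w_{\alpha} \in H^1_0(B_{m+1}(0,1))$ of \eqref{radial m+1 henon unit} related to $u_{\alpha}$ through \eqref{identification 5} and \eqref{half to complete}; by Proposition \ref{equivalence henon} both functions are classical ($C^{2,\gamma}$) and $w_{\alpha}$ is axially symmetric with respect to $\R e_{m+1}$. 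Proposition \ref{results about henon} then applies to $w_{\alpha}$: after possibly replacing $w_{\alpha}$ by $-w_{\alpha}$ (equivalently $u_{\alpha}$ by $-u_{\alpha}$), $w_{\alpha}>0$ in $B_{m+1}(0,1)$, $w_{\alpha}$ is not radially symmetric, and $w_{\alpha}$ is foliated Schwarz with respect to some unit vector $q$, with maximum $\mathcal{M}'_{\alpha}$ attained at a point of the form $(0,\ldots,0,\tau_{\alpha})$, $0<\tau_{\alpha}<1$, once $q$ is normalized to $e_{m+1}$.

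The one point that needs some care is to reconcile this ``up to rotation'' normalization with the fixed axis $\R e_{m+1}$ of the correspondence. Since $w_{\alpha}$ is simultaneously axially symmetric with respect to $\R e_{m+1}$ and foliated Schwarz with respect to $q$, I would argue that $q=\pm e_{m+1}$: if $q$ were not parallel to $e_{m+1}$, then rotating about the $e_{m+1}$-axis would move $q$ along a circle and, by axial symmetry, force all the corresponding spherical caps to be level sets of $w_{\alpha}$, i.e.\ $w_{\alpha}$ radial, contradicting the previous step. Hence $w_{\alpha}$ is foliated Schwarz with respect to $\pm e_{m+1}$, and by the monotonicity in the polar angle, on each sphere $\{|z|=s\}$ the maximum of $w_{\alpha}$ is attained at $\pm s\,e_{m+1}$; therefore $\mathcal{M}'_{\alpha}=\max_{B_{m+1}(0,1)}w_{\alpha}=w_{\alpha}((0,\ldots,0,\pm\tau_{\alpha}))$ for some $0<\tau_{\alpha}<1$. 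Everything else is bookkeeping, so I do not expect any genuine obstacle beyond this reconciliation and the careful tracking of the scalings $r=\sqrt{2\rho}$, $\theta=\sigma/2$ and $w(z)=4^{-1/(p-2)}v(z/2)$.

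Finally I would transport the three conclusions back to $B_{2m}(0,1)$. For positivity: via \eqref{identification 5}, $w_{\alpha}>0$ on $B_{m+1}(0,1)$ yields $u_{\alpha}>0$ on $B_{2m}(0,1)\setminus\{0\}$, and since $u_{\alpha}\in C^2$, $u_{\alpha}\geq 0$, $u_{\alpha}\not\equiv 0$ and $-\Delta_{2m}u_{\alpha}=|(y_1,y_2)|^{\alpha}u_{\alpha}^{p-1}\geq 0$, the strong maximum principle upgrades this to $u_{\alpha}>0$ on all of $B_{2m}(0,1)$. For non-radiality: if $u_{\alpha}$ depended only on $|(y_1,y_2)|$, then the associated $v$ in \eqref{identification 4} would be independent of $\sigma$, i.e.\ $w_{\alpha}$ would be radial, a contradiction. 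For the location of the maximum: combining \eqref{identification 5} and \eqref{half to complete}, the function $v$ associated with $u_{\alpha}$ attains its maximum at the point $(0,\ldots,0,\pm\tau_{\alpha}/2)$, that is at $\rho=\tau_{\alpha}/2$ and $\sigma\in\{0,\pi\}$; inverting \eqref{identification 1} (so $r=\sqrt{2\rho}=\sqrt{\tau_{\alpha}}$ and $\theta=\sigma/2$), and using that $\max u_{\alpha}=\max v=4^{1/(p-2)}\mathcal{M}'_{\alpha}$, gives $\max_{B_{2m}(0,1)}u_{\alpha}=u_{\alpha}(r_{\alpha},\theta_{*})$ with $r_{\alpha}:=\sqrt{\tau_{\alpha}}\in(0,1)$ and $\theta_{*}=\sigma/2\in\{0,\pi/2\}$, which is precisely the claimed identity.
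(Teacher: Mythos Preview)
Your proposal is correct and follows the same route the paper takes: the corollary is stated in the paper without proof, as an immediate consequence of Propositions~\ref{results about henon} and~\ref{relation least energy solutions}, and your argument simply fills in the details of transporting the properties of $w_{\alpha}$ back through \eqref{identification 5} and \eqref{half to complete}. Your extra care in reconciling the ``up to rotation'' clause of Proposition~\ref{results about henon} with the fixed axis $\R e_{m+1}$ of the correspondence, and in invoking the strong maximum principle to handle the origin, are genuine details the paper leaves implicit.
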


We stress that the above corollary guarantees that, up to replacing $u_{\alpha}(y_1, y_2)$ by $\overline{u}_{\alpha}(y_1, y_2) : = u_{\alpha}(y_2, y_1)$, we have
 \[
 \max_{(y_1, y_2) \in B_{2m}(0,1)} u_{\alpha}(y_1, y_2) = u_{\alpha}(y_1,0), \quad |y_1| = r_{\alpha}.
 \]
 
\begin{proof}[Proof of Theorem \ref{theorem henon concentration}]
 It is a straightforward consequence of Propositions \ref{results about henon}, \ref{relation least energy solutions} and Corollary \ref{doubly symmetric solutions henon}.  
  \end{proof}

\section{Partially symmetric problems and proof of Theorem \ref{theorem partial henon concentration}} \label{section partial}
In the search of doubly symmetric solutions of 
\begin{equation}\label{partial 2m}
\left\{
\begin{array}{l}
- \Delta_{2m} u = f(|y_1|, |y_2|, u), \quad (y_1, y_2) \in B_{2m}(0,1), \\
u = 0 \quad \text{on} \quad \partial B_{2m}(0,1),
\end{array}
\right.
\end{equation}
we perform the change of variables from Section \ref{section reduction}, see \eqref{identification 1}, \eqref{identification 3}, \eqref{identification 7}, and we are led  to investigate the existence of axially symmetric, with respect to $\R e_{m+1} \con \R^{m+1}$, solutions of
\begin{equation}\label{partial y1 y2 m+1}
\left\{
\begin{array}{l}
- \Delta_{m+1} v = \dfrac{f(\sqrt{| z| +z_{m+1}}, \sqrt{| z| - z_{m+1}}, v )}{2 |z|}, \ z \in B_{m+1}(0,1/2), \\ \\
 v = 0 \quad \text{on} \quad \partial B_{m+1}(0,1/2).
 \end{array}
\right.
\end{equation}

In this part we consider the particular problem
\begin{equation}\label{partial henon equation}
 \left\{
\begin{array}{l}
- \Delta_{2m} u = |y_2|^{\alpha} |u|^{p-2}u, \quad (y_1, y_2) \in B_{2m}(0,1), \\
u = 0 \quad \text{on} \quad \partial B_{2m}(0,1).
\end{array}
\right.
\end{equation}

Applying the moving planes technique \cite{gidas-ni-nirenberg} we know that any positive classical solution of \eqref{partial henon equation} is such that $u(y_1, y_2) = u (|y_1|, y_2)$ and, for each $y_2$, $u(y_1, y_2)$ is decreasing with respect to $|y_1|$. Therefore, if we look for positive doubly symmetric solutions of \eqref{partial henon equation} we obtain that for any such solution, there exists $0 \leq r < 1$ such that
\[
\gd{\max_{(y_1, y_2) \in B_{2m}(0,1)} u(y_1, y_2) = u(0, y_2), \quad \forall \; y_2 \in S^{m-1}_r},
\]
with $S^{m-1}_r$ as defined in \eqref{sphere radius r in rk}.

From now on in this section we will proceed to prove Theorem \ref{theorem partial henon concentration}.

First, by arguing similarly to the proof of Proposition \ref{equivalence henon}, we can prove the following equivalence. 

\begin{proposition}\label{equivalence partial henon}
Assume $m \geq2$,  $2 < p < \frac{2(m+1)}{m-1}$. Consider \eqref{partial henon equation} and 
\begin{equation}\label{partial henon equation reduced}
\left\{
\begin{array}{l}
 - \Delta_{m+1} v = \dfrac{(|z| - z_{m+1})^{\frac{\alpha}{2}}}{2 |z|}|v|^{p-2}v, \quad z \in B_{m+1}(0,1/2),\\
 v = 0 \quad \text{on} \quad \partial B_{m+1}(0,1/2).
\end{array}
\right.
\end{equation}
Then there exists $\alpha_0 = \alpha_{0}(p,m) > 4$ such that for every $\alpha > \alpha_0$ \eqref{identification 5} provides a bijective correspondence between  
\[
X = \{ u \in C^2(\overline{B}_{2m}(0,1)); \; u \; \text{is a doubly symmetric classical solution of} \; \eqref{partial henon equation} \}
\]
and
\begin{multline*}
Y = \left\{ v \in C^2(\overline{B}_{m+1}(0,1/2)); \; v \; \text{is an axially symmetric},\right.\\
\left.\text{w.r.t. $\R e_{m+1} \con \R^{m+1}$, classical solution of} \; \eqref{partial henon equation reduced}\right\}.
\end{multline*}
In addition, any $u \in X$ and any $v \in Y$ are such that $u \in C^{2, \gamma}(\overline{B}_{2m}(0,1))$, $v \in C^{2, \gamma}(\overline{B}_{m+1}(0,1/2))$ for all $0 < \gamma < 1$.
\end{proposition}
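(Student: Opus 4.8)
The plan is to follow the proof of Proposition~\ref{equivalence henon}, the only new inputs being the identity $|y_2|^{\alpha} = (|z| - z_{m+1})^{\alpha/2}$ furnished by \eqref{identification 7} and a regularity statement for \eqref{partial henon equation} that plays the role Lemma~\ref{lemma deo} played for the H\'enon equation. Two preliminary facts are needed. First, that for $\alpha$ large the space $\mathcal{H}_m$ embeds continuously (indeed compactly) into $L^p(B_{2m}(0,1),|y_2|^{\alpha})$, so that the energy functional associated to \eqref{partial henon equation} is well defined and of class $C^1$ on $\mathcal{H}_m$ and its critical points are exactly the weak doubly symmetric solutions of \eqref{partial henon equation}; this is obtained by arguing as in \cite[Theorem~2.1 and Corollary~2.3]{badiale-serra}, the weight $|y_2|^{\alpha}$ being, if anything, easier to handle than $|(y_1,y_2)|^{\alpha}$ since it is bounded. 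Second, the equivalence, for $\alpha > \alpha_0$, between weak doubly symmetric solutions and classical solutions of \eqref{partial henon equation}, together with the conclusion that such a solution belongs to $C^{2,\gamma}(\overline{B}_{2m}(0,1))$ for every $0 < \gamma < 1$; this is the exact analogue of Lemma~\ref{lemma deo}, proved as in \cite[Section~5]{ederson-djairo-olimpio-JFA} (the weight $|y_2|^{\alpha}$ is smooth away from $\{y_2=0\}$ and bounded, so the argument carries over with only cosmetic changes).

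Next I would run the change of variables. Given $u \in X$, Lemma~\ref{reduced laplacian} together with \eqref{identification 6} and \eqref{identification 7} shows that the function $v$ associated to $u$ by \eqref{identification 5} is a classical solution of \eqref{partial henon equation reduced} on the punctured ball $B_{m+1}(0,1/2)\setminus\{0\}$. Computing the Jacobian of \eqref{identification 1} and \eqref{identification 3} one finds that $d(y_1,y_2)$ corresponds, up to a dimensional constant, to $|z|^{-1}dz$; combined with \eqref{identification 6} this gives $\int_{B_{2m}(0,1)}|\nabla u|^2 = c\int_{B_{m+1}(0,1/2)}|\nabla v|^2 < \infty$ and, more generally, the equivalence between $u$ being a weak doubly symmetric solution of \eqref{partial henon equation} and $v$ being a weak solution of \eqref{partial henon equation reduced} in the $H^1_0(B_{m+1}(0,1/2))$ sense — here one uses the symmetric criticality principle of Palais \cite{palais}, the reduced problem being invariant under rotations of $(z_1,\ldots,z_m)$. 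The origin, which is the source of the obstruction in Remark~\ref{remark singularity}, is dealt with by noting that, since $2m\geq4$ and $m+1\geq3$, a point has zero $H^1$-capacity, so that removing it changes neither $H^1_0(B_{2m}(0,1))$ nor $H^1_0(B_{m+1}(0,1/2))$. Since $p < 2(m+1)/(m-1)$ is subcritical in dimension $m+1$ and the weight $(|z|-z_{m+1})^{\alpha/2}/(2|z|) = \tfrac12|z|^{\alpha/2-1}\bigl(1 - z_{m+1}/|z|\bigr)^{\alpha/2}$ is bounded (it vanishes at the origin because $\alpha>4$) and H\"older continuous, a standard bootstrap based on \cite{agmon-douglis-nirenberg} and Schauder estimates upgrades $v$ to a classical solution, $v \in C^{2,\gamma}(\overline{B}_{m+1}(0,1/2))$, so $v\in Y$. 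Conversely, given $v\in Y$, the same computation shows that the $u$ associated to it lies in $\mathcal{H}_m$ and is a weak doubly symmetric solution of \eqref{partial henon equation}, hence classical and of class $C^{2,\gamma}(\overline{B}_{2m}(0,1))$ by the regularity statement above; this closes the bijection and proves the final regularity assertion.

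The step I expect to be the main obstacle is the regularity result for \eqref{partial henon equation}, i.e.\ the analogue of Lemma~\ref{lemma deo}: since $2 < p < 2(m+1)/(m-1)$ permits $p > 4m/(2m-2)$, the $2m$-dimensional problem can be supercritical in the Sobolev sense, so the classical elliptic theory \cite{agmon-douglis-nirenberg,brezis-kato} does not apply directly and one must instead exploit the double symmetry to descend to the subcritical $(m+1)$-dimensional equation, bootstrap there, and then transfer the regularity back — while simultaneously controlling the origin, where the $C^2$-correspondence of Lemma~\ref{reduced laplacian} breaks down. The degenerate factor $|y_2|^{\alpha}$, which vanishes on the whole subspace $\{y_2=0\}$, has to be accommodated throughout, but being bounded and smooth off that subspace it does not cause essential new difficulties beyond those already handled in \cite{ederson-djairo-olimpio-JFA}.
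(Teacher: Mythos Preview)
Your proposal is correct and follows essentially the same route as the paper: mimic the proof of Proposition~\ref{equivalence henon}, using \eqref{identification 7} to identify $|y_2|^{\alpha}$ with $(|z|-z_{m+1})^{\alpha/2}$, and replace Lemma~\ref{lemma deo} by an analogous regularity statement for \eqref{partial henon equation}. The only difference is bibliographic: where you propose to reprove the needed regularity by adapting \cite[Section~5]{ederson-djairo-olimpio-JFA}, the paper simply invokes \cite[Theorem~2.5]{ederson-pedro-leonelo}, which already supplies exactly this result for the weight $|y_2|^{\alpha}$.
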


We recall that in the proof of Proposition \ref{equivalence henon} we used \cite[Propositions 5.4 and 5.5]{ederson-djairo-olimpio-JFA}, which assert about classical regularity of weak doubly symmetric solutions of the Hénon equation. The proof of Proposition \ref{equivalence partial henon} follows as the proof of Proposition \ref{equivalence henon}, if we replace Lemma \ref{lemma deo} by \cite[Theorem 2.5]{ederson-pedro-leonelo}, which in particular guarantees classical regularity of weak doubly symmetric solutions of \eqref{partial henon equation}.

We mention that, as in Proposition \ref{relation least energy solutions}, we can show the correspondence between least energy solutions among the doubly symmetric solutions of \eqref{partial henon equation} and least energy solutions among the axially symmetric, with respect to $\R e_{m+1} \con \R^{m+1}$, solutions of \eqref{partial henon equation reduced}. We then turn our attention to \eqref{partial henon equation reduced}. Observe that for every $\alpha >2$
\[
 \dfrac{(|z| - z_{m+1})^{\alpha/2}}{2 |z|} \leq (|z| - z_{m+1})^{(\alpha- 2)/2} \leq 1 \quad \forall \, z \in B_{m+1}(0,1/2) \menos\{0\} 
 \]
and
\[
\lim_{z \rt 0}  \dfrac{(|z| - z_{m+1})^{\alpha/2}}{2 |z|} = 0.
\]

Let $v$ be a positive and axially symmetric, with respect to $\R e_{m+1} \con \R^{m+1}$, solution of \eqref{partial henon equation reduced}. By Proposition \ref{equivalence partial henon}, if $u$ is associated to $v$ by means of \eqref{identification 5}, then $u$ is a positive doubly symmetric solution of \eqref{partial henon equation}. Then as observed before, by the moving planes technique, there exists $0 \leq r < 1$ such that
  \[
 \max_{(y_1, y_2) \in B_{2m}(0,1)} u(y_1, y_2) = u(0, y_2), \quad \forall \, y_2 \in S^{m-1}_{r}.
 \]
Then, with $\rho = \dfrac{r^2}{2}$, we have that
\[
\max_{z \in B_{m+1}(0,1/2)} v (z) = v(0, \ldots, 0, - \rho).
\]

Now let $v_{\alpha}$ be a least energy solution among the axially symmetric ones with respect to $\R e_{m+1} \con \R^{m+1}$, solutions of \eqref{partial henon equation reduced}. Then, up to a multiplicative constant, by the principle of symmetric criticality \cite{palais}, we characterize such solution as a minimizer of a Rellich quotient among the functions in $H^1_0(B_{m+1}(0,1/2))$ invariant by the action of the group
\begin{multline*}
\boldsymbol{G}_m = \left\{ \sigma \in \mathcal{O}(m+1); \exists \; g \in \mathcal{O}(m) \right.\\
\left. \text{s.t.} \; \sigma(z_1, \ldots z_m, z_{m+1}) = (g(z_1, \ldots, z_m), z_{m+1}) \right\}.
\end{multline*}
We can assume that $v_{\alpha} > 0$ in $B_{m+1}(0,1/2)$. So arguing as in the previous paragraph, there exists $0 \leq \rho_{\alpha} < \dfrac{1}{2}$ such that
\begin{equation}\label{maximum value partial henon equation}
\mathfrak {M}_{\alpha} : =\max_{z \in B_{m+1}(0,1/2)} v_{\alpha} (z) = v_{\alpha}(0, \ldots, 0, - \rho_{\alpha}).
\end{equation}

Let 
\begin{equation}\label{w and v}
w_{\alpha} (z) = \left(\dfrac{1}{4}\right)^{1/(p-2)}v_{\alpha}\left(\dfrac{z}{2}\right).
\end{equation} 
Then $w_{\alpha} > 0$ in $B_{m+1}(0,1)$ and $w_{\alpha}$ is a least energy solution among the axially symmetric, with respect to $\R e_{m+1} \con \R^{m+1}$, solutions of
\begin{equation}\label{partial henon equation reduced unit ball}
\left\{
\begin{array}{l}
 - \Delta_{m+1} w = h_{\alpha}(z)|w|^{p-2}w, \quad z \in B_{m+1}(0,1),\\
 w = 0 \quad \text{on} \quad \partial B_{m+1}(0,1),
\end{array}
\right.
\end{equation}
with
\[
h_{\alpha}(z) := \dfrac{\left(\dfrac{|z| - z_{m+1}}{2}\right)^{\frac{\alpha}{2}}}{ |z|}, \quad z \in B_{m+1}(0,1).
\]

Also observe that for every $\alpha > 2$
\begin{equation}\label{ineq weights}
h_{\alpha}(z) = \dfrac{\left(\dfrac{|z| - z_{m+1}}{2}\right)^{\frac{\alpha}{2}}}{ |z|} < |z|^{\frac{\alpha -2}{2}} \quad \forall \, z \in B_{m+1}(0,1)\menos\{0\}. 
\end{equation}

Now we compare \eqref{partial henon equation reduced unit ball} and
\begin{equation}\label{aux henon equation}
\left\{
\begin{array}{l}
 - \Delta_{m+1} \psi =  |z|^{\frac{\alpha -2}{2}}|\psi|^{p-2}\psi, \quad z \in B_{m+1}(0,1),\\
 \psi = 0 \quad \text{on} \quad \partial B_{m+1}(0,1).
\end{array}
\right.
\end{equation}

We set
\[
H_m : = \{ w  \in H^1_ 0(B_{m+1}(0,1)); g u = u \; \forall \; g \in \boldsymbol{G}_m \},
\]
the space of functions in $H^1_ 0(B_{m+1}(0,1))$ that are axially symmetric with respect to $\R e_{m+1} \con \R^{m+1}$. We also set
\[
\gd{ S_{\alpha,p} : = \inf_{\psi \in H^1_0(B_{m+1}(0,1)) \menos \{ 0\}} \dfrac{\int | \nabla \psi|^2 dz}{\left(\int |z|^{\frac{\alpha -2}{2}} |\psi|^p dz \right)^{2/p}} } 
\]
and
\[
\gd{S'_{\alpha, p} : =  \inf_{w \in H_m\menos \{ 0\}} \dfrac{\int | \nabla w|^2 dz}{ \left( \int h_{\alpha}(z) |w|^p dz\right)^{2/p}} }.
\]

Then, from \cite{smets-willem,pacella}, we have that any minimizer $\psi$ of $S_{\alpha,p}$, up to rotation, is such that $\psi \in H_m$. Then, from \eqref{ineq weights} we conclude that 
\begin{equation}\label{sobolev constants 1}
S'_{\alpha, p} > S_{\alpha,p} \quad \text{for every}  \quad \alpha > 2.
\end{equation}

We recall that
\begin{equation}\label{asymptotic henon}
 \dfrac{S_{\alpha,p}}{\alpha^{[2(m+1) - p(m-1)]/p}} = \dfrac{m_{1,p}}{2^{[2(m+1) - p(m-1)]/p}} + o(1) \quad \text{as} \quad \alpha \rt \infty,
\end{equation}
where
\[
m_{\gamma, p} = \inf \left\{ \int | \nabla w |^2 dz; w \in \mathcal{D}^{1,2}_0(\R^{m+1}_+), \ \int_{\R^{m+1}_+} e^{-\gamma z_{m+1}} |w|^p dz =1\right\},
\]
which is attained for every $\gamma>0$ and $2 < p < \frac{2(m+1)}{m-1}$; see \cite[Theorem 2.1 and Remark 4.8]{cao-peng-yan}. In particular, from \eqref{asymptotic henon}, there exist $C_1, C_2 >0$ such that
\begin{equation}\label{henon from below and above}
C_1 \alpha^{[2(m+1) - p(m-1)]/p} \leq S_{\alpha,p} \leq C_2 \alpha^{[2(m+1) - p(m-1)]/p} \quad \text{as} \quad \alpha \rt \infty.
\end{equation}

Moreover, the equation
\begin{equation}\label{limit problem henon}
-\Delta w = e^{-z_{m+1}} |w|^{p-2} w \quad \text{in} \quad \R^{m+1}_+
\end{equation}
is called the limit problem associated to \eqref{aux henon equation}, since after suitable rescaling, as showed in \cite{cao-peng-yan},  least energy solutions of \eqref{aux henon equation} converge to least energy solutions of \eqref{limit problem henon} as $\alpha \rt \infty$.

Next we prove that $S'_{\alpha,p}$ may also be controlled as in \eqref{henon from below and above}. Indeed we show that the limit problem associated to \eqref{partial henon equation reduced unit ball} is a slight variation of \eqref{limit problem henon}.

\begin{proposition}\label{proposition asymptotic partial henon reduced}
There holds
\begin{equation}\label{asymptotic partial henon reduced}
 \dfrac{S'_{\alpha,p}}{\alpha^{[2(m+1) - p(m-1)]/p}} = m_{1/2,p} + o(1) \quad \text{as} \quad \alpha \rt \infty.
\end{equation}
\end{proposition}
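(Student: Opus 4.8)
The plan is to establish the asymptotic formula for $S'_{\alpha,p}$ by sandwiching it between two quantities that both converge, after rescaling by $\alpha^{[2(m+1)-p(m-1)]/p}$, to $m_{1/2,p}$. The upper bound comes from a careful choice of test functions; the lower bound requires showing that any (near-)minimizing sequence for $S'_{\alpha,p}$ must, after the appropriate blow-up at a point of the boundary sphere $\partial B_{m+1}(0,1)$ where $z_{m+1}=1$, converge to a minimizer of the limit problem associated with the weight $h_\alpha$. The key point is that near the point $e_{m+1}=(0,\ldots,0,1)$, which is where concentration for $w_\alpha$ is expected (by analogy with Proposition~\ref{results about henon} and the ordering \eqref{ineq weights}), one has $|z|\to 1$ and $|z|-z_{m+1}\to 0$, so that
\[
h_\alpha(z) = \frac{1}{|z|}\left(\frac{|z|-z_{m+1}}{2}\right)^{\alpha/2}
\]
behaves, after the substitution $z = e_{m+1} - \frac{\xi}{\alpha}$ with $\xi$ in a bounded region of $\R^{m+1}_+$, like $2^{-\alpha/2}$ times $e^{-\xi_{m+1}/2}$ up to lower order terms. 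This is exactly the factor of $1/2$ in the exponent that distinguishes the limit problem here from \eqref{limit problem henon}, and it is why $m_{1/2,p}$ appears in place of $m_{1,p}/2^{[\cdot]/p}$.

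First I would set up the rescaling precisely: fix a small neighborhood of $e_{m+1}$, use geodesic-type coordinates so that $z_{m+1} = 1 - s$ with $s\ge 0$ small and the remaining coordinates tangential, expand $|z| - z_{m+1} = (|z|-1) + s$ and note $|z|-1 = O(\text{tangential}^2) + O(s)$, and then dilate by $\alpha$. Plugging a fixed $w \in \mathcal{D}^{1,2}_0(\R^{m+1}_+)$ with $\int_{\R^{m+1}_+} e^{-z_{m+1}/2}|w|^p\,dz = 1$, rescaled as $\psi_\alpha(z) = \alpha^{(m-1)/2}\, w(\alpha(e_{m+1}-z))$ and suitably cut off near the boundary, into the Rayleigh quotient for $S'_{\alpha,p}$ gives the upper bound $S'_{\alpha,p} \le (m_{1/2,p} + o(1))\,\alpha^{[2(m+1)-p(m-1)]/p}$; here the Jacobian of the dilation produces the power of $\alpha$, the gradient term is scale-invariant modulo the cutoff error, and the weight integral $\int h_\alpha |\psi_\alpha|^p$ converges to $\int_{\R^{m+1}_+} e^{-z_{m+1}/2}|w|^p\,dz$ times the extra $2^{-\alpha/2}$ factor, which cancels against the same factor one extracts from the $L^p$-norm normalization. (One must also check that concentrating at $e_{m+1}$, rather than anywhere else on the sphere, is optimal: since $h_\alpha$ vanishes at $z_{m+1}=|z|$ and is largest when $z_{m+1}$ is most negative, but the Dirichlet energy penalizes concentration away from the boundary, the competition localizes exactly at the boundary point maximizing $-z_{m+1}$, namely $e_{m+1}$ — this mirrors the Hénon analysis in \cite{cao-peng-yan}.)

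For the lower bound, I would take a minimizing sequence $w_\alpha \in H_m$ for $S'_{\alpha,p}$ normalized by $\int h_\alpha |w_\alpha|^p = 1$, and argue that $\int|\nabla w_\alpha|^2 = S'_{\alpha,p}$ forces $w_\alpha$ to concentrate (its mass cannot spread out, since $h_\alpha$ is uniformly small, by \eqref{ineq weights}, except in a shrinking neighborhood of $\partial B_{m+1}(0,1)$ near $e_{m+1}$, where the upper bound already shows the energy is $\asymp \alpha^{[\cdot]}$). Then rescale around the concentration point at scale $1/\alpha$; the rescaled functions are bounded in $\mathcal{D}^{1,2}$, converge weakly to some $w_0 \in \mathcal{D}^{1,2}_0(\R^{m+1}_+)$, the rescaled weights converge to $2^{-\alpha/2}e^{-z_{m+1}/2}$ in the relevant sense, and a concentration-compactness / no-vanishing-no-dichotomy argument (using the subcriticality $p < 2(m+1)/(m-1)$, which gives the compact embedding into the exponentially weighted $L^p$) yields $\int_{\R^{m+1}_+} e^{-z_{m+1}/2}|w_0|^p\,dz \ge 1$ after renormalization, hence $\liminf S'_{\alpha,p}/\alpha^{[\cdot]} \ge m_{1/2,p}$. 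Combining the two bounds gives \eqref{asymptotic partial henon reduced}.

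The main obstacle is the lower bound, specifically ruling out that the minimizing sequence either (i) splits its mass between the two antipodal-like regions on the sphere or into several bubbles, or (ii) fails to concentrate at scale exactly $1/\alpha$. For (i), the axial symmetry built into $H_m$ helps — the group $\boldsymbol{G}_m$ forces any concentration to be on the axis $\R e_{m+1}$, and the boundary behavior of $h_\alpha$ then singles out $e_{m+1}$ — but one still needs a quantitative statement that two bubbles would cost strictly more energy (superadditivity of $m_{1/2,p}$, i.e. $m_{1/2,p}$ evaluated with total mass $2$ exceeds $2^{(p-2)/p}$ times that with mass $1$, which follows from the scaling of the exponentially weighted problem). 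For (ii), one controls the concentration scale by testing the equation against the rescaled bubble and using the explicit form of $h_\alpha$ near $e_{m+1}$ to see that any other scale makes either the gradient term or the weight term the wrong order. I expect this part to follow the blueprint of \cite[Theorem 2.1]{cao-peng-yan} closely, with the only genuinely new ingredient being the bookkeeping of the $(|z|-z_{m+1})^{\alpha/2}/|z|$ weight in place of $|z|^{(\alpha-2)/2}$ — which, as noted, is precisely what turns $m_{1,p}$ into $m_{1/2,p}$.
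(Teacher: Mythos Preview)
Your overall two-sided strategy (test function for the upper bound, blow-up analysis for the lower bound) matches the paper, but there is a genuine geometric error that makes the argument fail as written: you have the concentration point on the wrong pole. The weight
\[
h_\alpha(z)=\frac{1}{|z|}\Bigl(\frac{|z|-z_{m+1}}{2}\Bigr)^{\alpha/2}
\]
vanishes where $z_{m+1}=|z|$ and is maximal where $z_{m+1}=-|z|$; you state this correctly, but then you place the concentration at $e_{m+1}$ and write $|z|-z_{m+1}\to 0$ there. Near $e_{m+1}$ one has $|z|-z_{m+1}=O(|\xi'|^2/\alpha^2)$ under your substitution $z=e_{m+1}-\xi/\alpha$, so $h_\alpha(z)$ is of order $(\alpha^{-2})^{\alpha/2}$, not $2^{-\alpha/2}e^{-\xi_{m+1}/2}$; the spurious $2^{-\alpha/2}$ factor you introduce and then ``cancel'' is a symptom of this. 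The correct blow-up point is $-e_{m+1}$: with $z=-e_{m+1}+\xi/\alpha$ one has $|z|\to 1$, $(|z|-z_{m+1})/2\to 1$, and a direct expansion (this is exactly the paper's computation leading to \eqref{exponential}) gives $h_\alpha(z)\to e^{-\xi_{m+1}/2}$ with no extra factor. Once you move everything to $-e_{m+1}$, your test-function upper bound goes through and yields the bound \eqref{almost sharp}.

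For the lower bound the paper takes a somewhat different route from your concentration--compactness sketch. Rather than working with an abstract minimizing sequence and excluding dichotomy by energy superadditivity, the paper uses that the minimizer $w_\alpha$ is a positive \emph{solution} of \eqref{partial henon equation reduced unit ball}; by the moving planes argument applied to the original problem \eqref{partial henon equation} in $B_{2m}(0,1)$, its maximum is automatically on the negative $e_{m+1}$-axis, so the location of concentration is known a priori and no splitting/bubble-counting is needed. The paper then establishes $\|w_\alpha\|_\infty\approx \alpha^{2/(p-2)}$ and $\alpha(1-\tau_\alpha)=O(1)$ by Liouville-type contradiction arguments (Lemmas~\ref{partial maximum value asymptotic} and \ref{first estimate maximum point}), rescales around $-e_{m+1}$, and uses elliptic regularity to get $C^1_{\mathrm{loc}}$ and then $\mathcal{D}^{1,2}$ convergence to a solution of the limit problem \eqref{limit equation *}; comparing its energy with the already-proved upper bound forces it to be a minimizer of $m_{1/2,p}$ and gives the exact limit. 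Your variational route could in principle be made to work, but the paper's use of the PDE structure and moving planes short-circuits the hardest steps (ruling out dichotomy and pinning down the concentration scale) that you flag as obstacles.
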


We prove some preliminary lemmas in order to go through the proof of Proposition \ref{proposition asymptotic partial henon reduced}.

\begin{lemma}
There exist $C_1, C_2$ positive constants such that
\begin{equation}\label{partial from below and above}
C_1 \alpha^{[2(m+1) - p(m-1)]/p} \leq S'_{\alpha,p} \leq C_2 \alpha^{[2(m+1) - p(m-1)]/p} \quad \text{as} \quad \alpha \rt \infty.
\end{equation}

\end{lemma}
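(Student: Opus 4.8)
The plan is to bound $S'_{\alpha,p}$ from above and below by quantities of order $\alpha^{[2(m+1)-p(m-1)]/p}$, using the comparison \eqref{sobolev constants 1} and \eqref{ineq weights} for one direction and an explicit test-function construction for the other.

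First I would establish the upper bound for $S'_{\alpha,p}$. Since $S'_{\alpha,p}$ is an infimum over the subspace $H_m \con H^1_0(B_{m+1}(0,1))$, I cannot directly use \eqref{henon from below and above}; instead I must exhibit a competitor $w \in H_m \menos \{0\}$ whose Rellich quotient against the weight $h_\alpha$ is of the right order. The natural choice is to mimic the extremal functions for $S_{\alpha,p}$: take a fixed least energy solution of the limit problem \eqref{limit problem henon} (after the suitable rescaling used in \cite{cao-peng-yan}), transplant it near the south pole $(0,\ldots,0,-1)$ of the ball — which is exactly where $h_\alpha$ is largest, since $h_\alpha(z) = \big(\frac{|z|-z_{m+1}}{2}\big)^{\alpha/2}/|z|$ is maximized on the sphere $|z|=\rho$ at $z_{m+1}=-\rho$ — and rescale it on the $\alpha^{-1}$ scale. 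Such a function is automatically axially symmetric with respect to $\R e_{m+1}$, hence lies in $H_m$. A direct (but routine) computation of $\int |\nabla w|^2$ and $\int h_\alpha |w|^p$, using that near the south pole $h_\alpha(z) \approx e^{-\frac{\alpha}{2}(1+z_{m+1})}$ up to lower order, gives the upper estimate $S'_{\alpha,p} \leq C_2 \alpha^{[2(m+1)-p(m-1)]/p}$; in fact this is precisely the computation that will later yield the sharp constant $m_{1/2,p}$ in Proposition \ref{proposition asymptotic partial henon reduced}, so it is worth doing carefully, but for the present lemma only the order matters.

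For the lower bound, I would combine \eqref{sobolev constants 1} with \eqref{henon from below and above}: since $S'_{\alpha,p} > S_{\alpha,p}$ for every $\alpha > 2$, and $S_{\alpha,p} \geq C_1 \alpha^{[2(m+1)-p(m-1)]/p}$ for $\alpha$ large by \eqref{henon from below and above}, we immediately get $S'_{\alpha,p} \geq C_1 \alpha^{[2(m+1)-p(m-1)]/p}$ as $\alpha \rt \infty$, which is the desired lower estimate. Together with the upper bound this proves \eqref{partial from below and above}.

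The main obstacle is the upper bound, specifically verifying that the transplanted-and-rescaled limit profile genuinely sees the weight $h_\alpha$ as (asymptotically) $e^{-\frac{\alpha}{2}(1+z_{m+1})}$ rather than something smaller. One must check that the singular factor $1/|z|$ in $h_\alpha$ is harmless near the south pole (there $|z|\approx 1$, so it contributes a bounded factor) and that the concentration of the test function on a ball of radius $\sim \alpha^{-1}$ around $(0,\ldots,0,-1)$ is fast enough that the part of the mass straying toward the origin — where $h_\alpha \to 0$ — is negligible. This is exactly the kind of estimate carried out for the classical Hénon weight in \cite{cao-peng-yan}, and the same scaling analysis applies here; the only new point is bookkeeping the factor $2^{-\alpha/2}$ and the exponent $\frac{\alpha}{2}$ (rather than $\alpha$), which is what produces $m_{1/2,p}$ in place of $m_{1,p}$.
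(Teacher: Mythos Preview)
Your proposal is correct and follows essentially the same strategy as the paper: the lower bound comes directly from \eqref{sobolev constants 1} and \eqref{henon from below and above}, and the upper bound is obtained by plugging in a rescaled profile concentrated near the south pole $-e_{m+1}$, where the asymptotics $h_\alpha(z)\to e^{-s_{m+1}/2}$ under the $\alpha^{-1}$ rescaling yield the constant $m_{1/2,p}$. The only technical refinement in the paper is the choice of the ``curved'' test function $\overline{w}_\epsilon(z)=w_\epsilon(\alpha z',\,\alpha[(1-|z'|^2)^{1/2}+z_{m+1}])$, which flattens the boundary and guarantees $\overline{w}_\epsilon\in H^1_0(B_{m+1}(0,1))$ exactly; a straight linear rescaling around $-e_{m+1}$ would let the support protrude slightly from the ball and would require a cutoff, but this is a minor bookkeeping issue.
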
 
\begin{proof}
Given $\epsilon >0$, choose $w_{\epsilon} \in C_c^{\infty}(\R^{m+1}_+)$ such that, $w_{\epsilon} \neq 0$, $w_{\epsilon}$ is axially symmetric with respect to $\R e_{m+1} \con \R^{m+1}$ and
\[
\dfrac{\int_{\R^{m+1}_+} |\nabla w_{\epsilon}(s)|^2 ds}{ \left( \int_{\R^{m+1}_+} e^{- (s_{m+1}/2)} |w_{\epsilon}(s)|^p ds \right)^{2/p}} < m_{1/2,p} + \epsilon.
\]

Set
\[
\overline{w}_{\epsilon} (z) = w_{\epsilon} (\alpha z', \alpha[ (1- |z'|^2)^{1/2} + z_{m+1}]), \quad z = (z', z_{m+1}) \in B_{m+1}(0,1).
\]
Then, it is easy to see that $\overline{w}_{\epsilon} \in H_m$ for any large $\alpha$.

We will perform the change of variables for $x = (x', x_{m+1})\in \R^m \times \R$, $s = (s', s_{m+1}) \in \R^m \times \R$:
\begin{equation}\label{change of variables reduced}
x = \alpha e_{m+1} + \alpha z \ \text{and} \ s' = x', \, s_{m+1} = x_{m+1} + \alpha (-1 + (1 - \alpha^{-2} |x'|^2)^{1/2}). 
\end{equation}
Then, since $w_{\epsilon}$ has compact support in $\R^{m+1}_+$ we get:
\begin{multline*}
\int_{B_{m+1}(0,1)} \!\!\!\!\!\!\!\!\!| \nabla \overline{w}_{\epsilon}|^2 dz = \alpha^2 \int_{B_{m+1}(0,1)} \!\! \left\{\sum_{i =1}^{m} \left[\left|\partial_i w_{\epsilon}(\alpha z', \alpha[ (1- |z'|^2)^{1/2} + z_{m+1}]) \right. \right. \right. \\
 \left. \left. - \frac{z_i}{(1- |z'|^2)^{1/2}}\partial_{m+1}w_{\epsilon}(\alpha z', \alpha[ (1- |z'|^2)^{1/2} + z_{m+1}])\right|^2 \right] \\
\left.+ \left| \partial_{m+1} w_{\epsilon}(\alpha z', \alpha[ (1- |z'|^2)^{1/2} + z_{m+1}]) \right|^2 \right\}dz \\
= \alpha^{1-m}\int_{B_{m+1}(\alpha e_{m+1}, \alpha)}  \left\{\sum_{i =1}^{m} \left[\left|\partial_i w_{\epsilon}(x', (\alpha^2- |x'|^2)^{1/2} + x_{m+1} -\alpha) \right. \right. \right. \\
 \left. \left. - \frac{\alpha^{-1} x_i}{(1 - \alpha^{-2}|x'|^2)^{1/2}}\partial_{m+1}w_{\epsilon}(x', (\alpha^2- |x'|^2)^{1/2} + x_{m+1} -\alpha)\right|^2 \right]\\
 \left. + \left| \partial_{m+1} w_{\epsilon}(x', (\alpha^2- |x'|^2)^{1/2} + x_{m+1} -\alpha) \right|^2 \right\}dx\\
= \alpha^{1-m} \int_{\R^{m+1}_+} \left\{\sum_{i =1}^{m} \left[\left|\partial_i w_{\epsilon}(s) - \frac{\alpha^{-1} s_i}{(1 - \alpha^{-2}|s'|^2)^{1/2}}\partial_{m+1}w_{\epsilon}(s)\right|^2 \right] \right.\\
\left. + \left| \partial_{m+1} w_{\epsilon}(s) \right|^2 \right\}ds = \alpha^{1-m}\left[ \int_{\R^{m+1}_+} | \nabla w_{\epsilon}(s)|^2 ds + O(\alpha^{-1})  \right].
\end{multline*}

On the other hand, by the change of variables \eqref{change of variables reduced}, we have
\begin{multline*}
 h_{\alpha}(z) = \left. \left( \frac{|x - \alpha e_{m+1}| - (x_{m+1} - \alpha)}{2\alpha} \right)^{\alpha/2} \middle/ \left| \frac{x}{\alpha} - e_{m+1} \right| \right. \\
 = \left( \frac{\sqrt{|s'|^2 + (s_{m+1} - (\alpha^2 - |s'|^2)^{1/2})^2}  - (s_{m+1} - (\alpha^2 - |s'|^2)^{1/2})}{2\alpha}\right)^{\alpha/2}  \cdot \\
  \cdot \left(\left|\frac{s'}{\alpha}\right|^2 + \left(\frac{s_{m+1}}{\alpha} - \left(1 - \left|\frac{s'}{\alpha}\right|^2\right)^{1/2}\right)^2\right)^{-1/2}.
\end{multline*}

Now, if $s \in supp \,w_{\epsilon}$, then
\[
\sqrt{ \left|\frac{s'}{\alpha}\right|^2 + \left(\frac{s_{m+1}}{\alpha} - \left(1 - \left|\frac{s'}{\alpha}\right|^2\right)^{1/2}\right)^2} = 1 + O(\alpha^{-1})
\]
and
\begin{multline*}
\frac{\sqrt{|s'|^2 + (s_{m+1} - (\alpha^2 - |s'|^2)^{1/2})^2}  - (s_{m+1} - (\alpha^2 - |s'|^2)^{1/2})}{2\alpha} \\
= - \frac{s_{m+1}}{\alpha} + 1 + \frac{\sqrt{|s'|^2 + (s_{m+1} - (\alpha^2 - |s'|^2)^{1/2})^2}  +(s_{m+1} - (\alpha^2 - |s'|^2)^{1/2})}{2\alpha} \\
+ \frac{(\alpha^2 - |s'|^2)^{1/2} - \alpha}{\alpha} = 1 - \frac{s_{m+1}/2}{\alpha/2} + O(\alpha^{-2}).
\end{multline*}

Then, if $s \in supp \, w_{\epsilon}$ we have,
\begin{equation}\label{exponential}
h_{\alpha}(z) = e^{-\frac{s_{m+1}}{2} + O(\alpha^{-1})} + O(\alpha^{-1})
\end{equation}
and so
\begin{multline*}
\int_{B_{m+1}(0,1)} \!\!\!\!\!\!\!\!\!\!\!\!h_{\alpha}(z) \overline{w}^p_{\epsilon}(z) dz = \alpha^{-(m+1)} \left[ \int_{\R^{m+1}_+} \!\!\!\!e^{-\frac{s_{m+1}}{2}  + O(\alpha^{-1})} w_{\epsilon}^p(s)ds + O(\alpha^{-1})  \right] \\
= \alpha^{-(m+1)} \left[ \int_{\R^{m+1}_+} e^{-\frac{s_{m+1}}{2}} w_{\epsilon}^p(s)ds + O(\alpha^{-1})  \right].
\end{multline*}

Hence, by the definition of $S'_{\alpha,p}$, we have
\begin{multline*}
S'_{\alpha, p} \leq \alpha^{[2(m+1) - p(m-1)]/p}\dfrac{\int_{\R^{m+1}_+} | \nabla w_{\epsilon}|^2 ds + O(\alpha^{-1})}{\left( \int_{\R^{m+1}_+} e^{-\frac{s_{m+1}}{2}} w_{\epsilon}^p(s)ds + O(\alpha^{-1}) \right)^{2/p}} \\
= \alpha^{[2(m+1) - p(m-1)]/p}\dfrac{\int_{\R^{m+1}_+} | \nabla w_{\epsilon}|^2 ds }{\left( \int_{\R^{m+1}_+} e^{-\frac{s_{m+1}}{2}} w_{\epsilon}^p(s)ds \right)^{2/p} + O(\alpha^{-1})} \\
\leq m_{1/2,p} + \epsilon + O(\alpha^{-1}).
\end{multline*}
From \eqref{sobolev constants 1}, \eqref{henon from below and above} and the last inequality we have that there exist $C_1>0$ such that
\begin{equation}\label{almost sharp}
C_1 \leq \dfrac{S'_{\alpha,p}}{\alpha^{[2(m+1) - p(m-1)]/p}} \leq m_{1/2,p} + o(1) \quad \text{as} \quad \alpha \rt \infty.
\end{equation} 
\end{proof}

\medskip

Let $w_{\alpha} > 0$ be a least energy solution among the axially symmetric, with respect to $\R e_{m+1} \con \R^{m+1}$, solutions of \eqref{partial henon equation reduced unit ball}. Then
\[
\int_{B_{m+1}(0,1)} | \nabla w_{\alpha}|^2 dz = \int_{B_{m+1}(0,1)} h_{\alpha}(z) w_{\alpha}^p dz = \left(S'_{\alpha,p}\right)^{p/{p-2}}
\]
and from \eqref{partial from below and above}, there exist $C_1, C_2$ positive constants such that
\begin{multline}\label{gradient w}
C_1 \alpha^{[2(m+1) - p(m-1)]/(p-2)} \leq \int_{B_{m+1}(0,1)} | \nabla w_{\alpha}|^2 dz = \int_{B_{m+1}(0,1)} h_{\alpha}(z) w_{\alpha}^p dz \\
\leq C_2  \alpha^{[2(m+1) - p(m-1)]/(p-2)}
\end{multline}
as $\alpha \rt \infty$.

Set
\[
\overline{w}_{\alpha}(z) = \alpha^{-2/(p-2)} w_{\alpha} \left( \frac{z}{\alpha}\right), \quad z \in B_{m+1}(0, \alpha).
\]
Then we have
\[
-\Delta \overline{w}_{\alpha}  = h_{\alpha} \left( \frac{z}{\alpha}\right) (\overline{w}_{\alpha})^{p-1}, \ z \in B_{m+1}(0, \alpha) \ \text{with} \  \overline{w}_{\alpha} = 0 \ \text{on} \ \partial B_{m+1}(0, \alpha)
\]
and
\[
\int_{B_{m+1}(0, \alpha)} | \nabla \overline{w}_{\alpha}|^2 dz = \alpha^{- [2(m+1) - p(m-1)]/(p-2) }\int_{B_{m+1}(0,1)} | \nabla w_{\alpha}|^2 dz 
\]
and hence
\begin{equation}\label{bound of gradient}
C_1 \leq \int_{B_{m+1}(0, \alpha)} | \nabla \overline{w}_{\alpha}|^2 dz \leq C_2 \quad \text{as} \quad \alpha \rt \infty.
\end{equation}

On the other hand, as proved in \cite[pp. 473 and 474]{cao-peng-yan}, there exists $C_{3}>0$ such that
\[
\inf_{u \in H^1_0(B_{m+1}(0,1)), \ u \neq 0} \dfrac{\int |\nabla u|^2 dz}{\int |z|^{(\alpha-2)/2} u^2 dz} \geq C_3 \alpha^2 \quad \text{as} \quad \alpha \rt \infty.
\]
As a consequence
\[
\dfrac{\int |\nabla w_{\alpha}|^2 dz}{\int h_{\alpha}(z) w_{\alpha}^2 dz} >  \dfrac{\int |\nabla w_{\alpha}|^2 dz}{\int |z|^{(\alpha-2)/2} w_{\alpha}^2 dz} \geq C_ 3 \alpha^2 \quad \text{as} \quad \alpha \rt \infty.
\]
Then we combine \eqref{gradient w} and the last inequality to get
\begin{multline}\label{l2}
\int_{B_{m+1}(0, \alpha)} h_{\alpha} \left( \frac{z}{\alpha}\right) \overline{w}_{\alpha}^2 (z) dz = \alpha^{m+1 - \frac{4}{p-2}} \int_{B_{m+1}(0,1)} h_{\alpha}(z) w_{\alpha}^2(z) dz \\
 \leq C \alpha^{[p(m-1) - 2(m+1)]/(p-2)} \int_{B_{m+1}(0,1)} | \nabla w_{\alpha}|^2 dz \leq C \quad \text{as} \quad \alpha \rt \infty.
\end{multline}

\begin{lemma}\label{partial maximum value asymptotic}
 There exist $C_1, C_2$ positive constants such that
 \begin{equation}\label{E1}
 C_1 \leq \max_{z \in B_{m+1}(0, \alpha)}\overline{w}_{\alpha} (z) \leq C_2 \quad \text{as} \quad \alpha \rt \infty,
 \end{equation}
 that is,  \begin{equation}\label{E2}
 C_1 \alpha^{2/(p-2)}\leq \max_{z \in B_{m+1}(0, 1)}w_{\alpha} (z) \leq C_2 \alpha^{2/(p-2)} \quad \text{as} \quad \alpha \rt \infty.
 \end{equation}
\end{lemma}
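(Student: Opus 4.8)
The plan is to prove the two bounds in \eqref{E1} separately; \eqref{E2} is just a restatement of \eqref{E1}, since $\overline{w}_{\alpha}(z)=\alpha^{-2/(p-2)}w_{\alpha}(z/\alpha)$ gives $\max_{B_{m+1}(0,1)}w_{\alpha}=\alpha^{2/(p-2)}\max_{B_{m+1}(0,\alpha)}\overline{w}_{\alpha}$. The lower bound is immediate by contradiction: if $\alpha_n\to\infty$ with $\mu_n:=\max_{B_{m+1}(0,\alpha_n)}\overline{w}_{\alpha_n}\to0$, then testing $-\Delta\overline{w}_{\alpha}=h_{\alpha}(z/\alpha)\overline{w}_{\alpha}^{p-1}$ with $\overline{w}_{\alpha}$ gives $\int|\nabla\overline{w}_{\alpha}|^2=\int h_{\alpha}(z/\alpha)\overline{w}_{\alpha}^{p}$, and since $p>2$ and $0\le\overline{w}_{\alpha_n}\le\mu_n$ one has $\overline{w}_{\alpha_n}^{p}\le\mu_n^{p-2}\overline{w}_{\alpha_n}^{2}$; combined with \eqref{l2} this forces $\int|\nabla\overline{w}_{\alpha_n}|^2\le C\mu_n^{p-2}\to0$, contradicting the lower bound in \eqref{bound of gradient}.

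For the upper bound I argue by contradiction and rescale at the maximum. Suppose $\alpha_n\to\infty$ (still written $\alpha$) with $\mathcal M_{\alpha}:=\max\overline{w}_{\alpha}\to\infty$, and let $z_{\alpha}=(0,\dots,0,-t_{\alpha})$, $0\le t_{\alpha}<\alpha$, be a maximum point (on the negative $e_{m+1}$-axis, by the symmetry established before \eqref{maximum value partial henon equation}). Put $\lambda_{\alpha}:=\mathcal M_{\alpha}^{-(p-2)/2}\to0$, $\Omega_{\alpha}:=\lambda_{\alpha}^{-1}\big(B_{m+1}(0,\alpha)-z_{\alpha}\big)$, and $\tilde w_{\alpha}(y):=\mathcal M_{\alpha}^{-1}\overline{w}_{\alpha}(z_{\alpha}+\lambda_{\alpha}y)$ on $\Omega_{\alpha}$. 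Then $\Omega_{\alpha}$ is a ball of radius $\alpha/\lambda_{\alpha}\to\infty$, $0\le\tilde w_{\alpha}\le1$, $\tilde w_{\alpha}(0)=1$, $\tilde w_{\alpha}=0$ on $\partial\Omega_{\alpha}$, and $-\Delta\tilde w_{\alpha}=\tilde h_{\alpha}\tilde w_{\alpha}^{p-1}$ with $\tilde h_{\alpha}(y):=h_{\alpha}\big((z_{\alpha}+\lambda_{\alpha}y)/\alpha\big)$ satisfying $0\le\tilde h_{\alpha}\le1$ by \eqref{ineq weights} (and $\alpha>2$). The decisive observation is that this (equation-preserving but not energy-preserving) scaling shrinks the Dirichlet energy: the change of variables $z=z_{\alpha}+\lambda_{\alpha}y$ gives
\[
\int_{\Omega_{\alpha}}|\nabla\tilde w_{\alpha}|^2\,dy=\mathcal M_{\alpha}^{[(p-2)(m-1)-4]/2}\int_{B_{m+1}(0,\alpha)}|\nabla\overline{w}_{\alpha}|^2\,dz ,
\]
and because $p<\frac{2(m+1)}{m-1}$ is equivalent to $(p-2)(m-1)<4$, the exponent is strictly negative; hence, by the upper bound in \eqref{bound of gradient}, $\int_{\Omega_{\alpha}}|\nabla\tilde w_{\alpha}|^2\to0$.

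It then remains to derive a contradiction from $\int_{\Omega_{\alpha}}|\nabla\tilde w_{\alpha}|^2\to0$ together with $\tilde w_{\alpha}(0)=1$. Passing to a further subsequence, either $d_{\alpha}:=\mathrm{dist}(0,\partial\Omega_{\alpha})=\lambda_{\alpha}^{-1}(\alpha-t_{\alpha})\to\infty$ or $d_{\alpha}\to L\in[0,\infty)$. In the first case $B_{m+1}(0,1)\con\Omega_{\alpha}$ for large $\alpha$; since $0\le\tilde h_{\alpha}\tilde w_{\alpha}^{p-1}\le1$, interior $W^{2,q}$ and Schauder estimates (cf.\ \cite{agmon-douglis-nirenberg}) give $\|\tilde w_{\alpha}\|_{C^{1,\gamma}(B_{m+1}(0,1/2))}\le C$, so from $\tilde w_{\alpha}(0)=1$ we get $\tilde w_{\alpha}\ge\frac12$ on a ball $B_{m+1}(0,\delta)$ with $\delta>0$ independent of $\alpha$; extending $\tilde w_{\alpha}$ by zero and using the Sobolev inequality in $\R^{m+1}$ (valid since $m\ge2$, so $m+1\ge3$), $\|\tilde w_{\alpha}\|_{L^{2(m+1)/(m-1)}(\R^{m+1})}\le S\|\nabla\tilde w_{\alpha}\|_{L^2(\Omega_{\alpha})}\to0$, which is incompatible with $\tilde w_{\alpha}\ge\frac12$ on $B_{m+1}(0,\delta)$. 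In the second case, for large $\alpha$ the set $\Omega_{\alpha}\cap B_{m+1}(0,2L+2)$ is a smooth domain whose boundary (a portion of a sphere of radius $\to\infty$) comes within distance $d_{\alpha}$ of $0$ along $-e_{m+1}$; up-to-the-boundary elliptic estimates with constants uniform in $\alpha$ give a uniform $C^{1,\gamma}$ bound there, so along a subsequence $\tilde w_{\alpha}\to\tilde w$ in $C^1$ on $\overline H\cap B_{m+1}(0,2L+1)$ with $H=\{y_{m+1}>-L\}$ and $\tilde w=0$ on $\{y_{m+1}=-L\}$, while $\int_{\Omega_{\alpha}}|\nabla\tilde w_{\alpha}|^2\to0$ forces $\tilde w$ to be constant, hence $\tilde w\equiv0$; this contradicts $\tilde w(0)=\lim\tilde w_{\alpha}(0)=1$. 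Thus $\mathcal M_{\alpha}$ must stay bounded, which is the upper bound in \eqref{E1}.

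The only genuinely delicate point is the location of the blow-up: a priori $z_{\alpha}$ may approach $\partial B_{m+1}(0,\alpha)$, so one cannot take for granted that the rescaled limit problem lives on all of $\R^{m+1}$, and this is exactly why the dichotomy $d_{\alpha}\to\infty$ versus $d_{\alpha}\to L$ is needed. Everything else is the routine combination of the a priori estimates \eqref{bound of gradient} and \eqref{l2}, the sub-criticality of $p$ in dimension $m+1$ (which is precisely what makes the rescaled energy vanish and what legitimizes the Sobolev step), and standard elliptic regularity.
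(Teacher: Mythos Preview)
Your proof is correct and follows essentially the same strategy as the paper: the lower bound via the chain $\int|\nabla\overline w_\alpha|^2=\int h_\alpha(\cdot/\alpha)\overline w_\alpha^{\,p}\le(\max\overline w_\alpha)^{p-2}\int h_\alpha(\cdot/\alpha)\overline w_\alpha^{\,2}$ is identical to the paper's, and for the upper bound both you and the paper rescale at the maximum point with factor $\mathcal M_\alpha^{-(p-2)/2}$ and use the subcriticality $p<\tfrac{2(m+1)}{m-1}$ to force the rescaled Dirichlet energy to vanish. The only difference is in how the contradiction is extracted at the end: the paper passes to a limit $v$ solving $-\Delta v=0$ in $\Omega$ (with $\Omega=\R^{m+1}$ or a half-space), $\|v\|_\infty=1$, $v\in\mathcal D^{1,2}_0(\Omega)$, and invokes Liouville, whereas you split into the cases $d_\alpha\to\infty$ and $d_\alpha\to L<\infty$ and argue more directly (Sobolev inequality against a uniform lower bound on a fixed ball in the first case, and ``zero gradient $\Rightarrow$ constant $\Rightarrow$ zero by the boundary condition'' in the second). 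Both endings are routine once the energy has been shown to vanish; your version has the small advantage of making explicit the dichotomy you flag as the ``genuinely delicate point'', which the paper handles in one line by allowing the limit domain to be either the whole space or a half-space.
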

\begin{proof}
From \eqref{bound of gradient} and \eqref{l2}, it follows that
\begin{multline*}
0 < C_1 \leq \int_{B_{m+1}(0, \alpha)} | \nabla \overline{w}_{\alpha}|^2 dx =  \int_{B_{m+1}(0, \alpha)} h_{\alpha}\left( \frac{z}{\alpha} \right) (\overline{w}_{\alpha})^p dz \\
\leq \max_{z \in B_{m+1}(0,\alpha)} (\overline{w}_{\alpha})^{p-2}  \int_{B_{m+1}(0, \alpha)} h_{\alpha}\left( \frac{z}{\alpha} \right) (\overline{w}_{\alpha})^2 dz \leq C \max_{z \in B_{m+1}(0,\alpha)} (\overline{w}_{\alpha})^{p-2} 
\end{multline*}
as $\alpha \rt \infty$.

Now we prove the reverse inequality. By contradiction, suppose that $(\| \overline{w}_{\alpha} \|_{\infty})$ is not bounded from above as $\alpha \rt \infty$. Then there exists a sequence $(\alpha_n)$ such that $\| \overline{w}_{\alpha_n} \|_{\infty} \rt \infty$ and $\alpha_n \rt \infty$ as $n \rt \infty$. Let $z_{\alpha_n} \in B_{m+1}(0, \alpha_n)$ such that $\| \overline{w}_{\alpha_n} \|_{\infty} = \overline{w}_{\alpha_n} (z_{\alpha_n})$ and set
\[
\overline{v}_{\alpha_n} (z) = \frac{1}{\| \overline{w}_{\alpha_n} \|_{\infty}} \overline{w}_{\alpha_n} \left( \| \overline{w}_{\alpha_n} \|_{\infty}^{-(p-2)/2} z+ z_{\alpha_n}\right), 
\]
for $z \in B_{m+1}(- z_{\alpha_n} \| \overline{w}_{\alpha_n} \|_{\infty}^{(p-2)/2}, \alpha_n \| \overline{w}_{\alpha_n} \|_{\infty}^{(p-2)/2} )$.
Then
\[
-\Delta \overline{v}_{\alpha_n} = h_{\alpha_n} \left( \frac{\| \overline{w}_{\alpha_n} \|_{\infty}^{-(p-2)/2} z }{\alpha_n} + z_{\alpha_n}\right) (\overline{v}_{\alpha_n})^{p-1} 
\]
in $B_{m+1}(- z_{\alpha_n} \| \overline{w}_{\alpha_n} \|_{\infty}^{(p-2)/2}, \alpha_n \| \overline{w}_{\alpha_n} \|_{\infty}^{(p-2)/2} )$, with homogenous Dirichlet \linebreak boundary condition. Observe that, from \eqref{bound of gradient} and $2 < p < \frac{2(m+1)}{m-1}$, it \linebreak follows that
\[
\int | \nabla \overline{v}_{\alpha_n}|^2 dz = \| \overline{w}_{\alpha_n} \|_{\infty}^{[p(m-1) -2 (m+1)]/2} \int | \nabla \overline{w}_{\alpha_n} |^2 dz \rt 0 \quad \text{as} \quad \alpha \rt \infty.
\]
Then $B_{m+1}(- z_{\alpha_n} \| \overline{w}_{\alpha_n} \|_{\infty}^{(p-2)/2}, \alpha_n \| \overline{w}_{\alpha_n} \|_{\infty}^{(p-2)/2} ) \rt \Omega$ as $\alpha_n \rt \infty$, where $\Omega = \R^{m+1}$ or $\Omega$ is a (possibly affine) half-space in $\R^{m+1}$  and we get the existence of $v$ a solution of
\[
-\Delta v = 0 \quad \text{in} \quad \Omega \quad \text{with} \quad \| v \|_{\infty} =1, \quad v \in \mathcal{D}^{1,2}_0(\Omega),
\]
which contradicts the classical Liouville's theorem. 
\end{proof}

Let $0 \leq \tau_{\alpha} < 1$, see \eqref{maximum value partial henon equation} and \eqref{w and v}, such that 
\[
\max_{z \in B_{m+1}(0,1)} w_{\alpha}(z) = w_{\alpha}(- \tau e_{m+1}).
\]

\begin{lemma}\label{first estimate maximum point}
The product
 \[
 \alpha (1 - \tau_{\alpha}) \quad \text{remains bounded as} \quad \alpha \rt \infty.
 \]
\end{lemma}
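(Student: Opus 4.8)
The strategy is to argue by contradiction: suppose along a sequence $\alpha_n \rt \infty$ one has $\alpha_n(1 - \tau_{\alpha_n}) \rt \infty$, and rescale the solution $w_{\alpha_n}$ around its maximum point $-\tau_{\alpha_n}e_{m+1}$ so that the rescaled functions converge to an entire solution of a limit equation with too much energy, contradicting the upper bound \eqref{gradient w}. Concretely, I would set $\widetilde w_n(z) := \alpha_n^{-2/(p-2)} w_{\alpha_n}\!\left(-\tau_{\alpha_n}e_{m+1} + \tfrac{z}{\alpha_n}\right)$, defined on the rescaled domain $D_n := \alpha_n\bigl(B_{m+1}(0,1) + \tau_{\alpha_n}e_{m+1}\bigr)$. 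By Lemma \ref{partial maximum value asymptotic} the sup of $\widetilde w_n$ stays pinned between two positive constants (it equals $\alpha_n^{-2/(p-2)}\mathbf{M}_{\alpha_n}$, attained at the origin), and by \eqref{bound of gradient} the Dirichlet energy $\int_{D_n} |\nabla \widetilde w_n|^2$ stays bounded.

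The next step is to identify the limiting domain and equation. Since $-\tau_{\alpha_n}e_{m+1} \in \overline{B_{m+1}(0,1)}$ and $\alpha_n(1-\tau_{\alpha_n})\rt\infty$, the rescaled domain $D_n$ exhausts all of $\R^{m+1}$: indeed the ball $B_{m+1}(0,1)$ after translating by $\tau_{\alpha_n}e_{m+1}$ and dilating by $\alpha_n$ has inner radius $\alpha_n(1-\tau_{\alpha_n})\rt\infty$ in every direction from the origin. For the nonlinearity, I would compute $h_{\alpha_n}\!\left(-\tau_{\alpha_n}e_{m+1} + \tfrac{z}{\alpha_n}\right)$ and show, using the elementary expansion already carried out around \eqref{exponential} (with the role of $(\alpha^2-|s'|^2)^{1/2}$ now played by the radial variable near the south pole), that on compact sets it converges to $e^{z_{m+1}/2}$ up to the normalization built into $\tau_{\alpha_n}$; more carefully, since $w_{\alpha_n}$ is only known to be axially symmetric, I would use the decreasing-in-$|y_1|$ monotonicity (equivalently, the location of the maximum on the $-e_{m+1}$ half-axis from \eqref{maximum value partial henon equation}) together with elliptic estimates ($L^p$ interior estimates plus Schauder, exactly as in the proof of Lemma \ref{partial maximum value asymptotic}) to get $\widetilde w_n \rt \widetilde w$ in $C^2_{loc}(\R^{m+1})$, where $\widetilde w \geq 0$, $\widetilde w(0) = \|\widetilde w\|_\infty > 0$, $\widetilde w \in \mathcal{D}^{1,2}(\R^{m+1})$, and $-\Delta \widetilde w = c\, e^{z_{m+1}/2}\,\widetilde w^{p-1}$ for a positive constant $c$.

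The final step is to derive the contradiction from the limit problem on all of $\R^{m+1}$. The point is that $\int_{\R^{m+1}} e^{z_{m+1}/2}\,\widetilde w^{p}\,dz$ must then be finite and positive, but the weight $e^{z_{m+1}/2}$ is unbounded as $z_{m+1}\rt +\infty$; since $\widetilde w$ attains its positive maximum at a finite point and is a nonnegative supersolution of $-\Delta \widetilde w \geq 0$, a Harnack/mean-value argument forces $\widetilde w$ to stay bounded below by a positive constant on translates of a fixed ball marching off to $z_{m+1} = +\infty$, making $\int e^{z_{m+1}/2}\widetilde w^p = +\infty$ — contradicting finite energy. Alternatively, and perhaps more cleanly, one invokes a Pohozaev-type / Liouville nonexistence result: a bounded-energy solution of $-\Delta \widetilde w = c\,e^{z_{m+1}/2}\widetilde w^{p-1}$ on the whole space $\R^{m+1}$ with an \emph{exponentially growing} weight cannot exist. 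I expect this last dichotomy — carefully ruling out both the full-space limit and verifying that the rescaled weight genuinely blows up rather than degenerates, i.e. confirming that $\tau_{\alpha_n}$ cannot drift toward the center $\rho_{\alpha_n}\to 0$ in a way that kills the weight — to be the main obstacle; it requires knowing that the maximum sits near the boundary, which in turn should follow by testing $S'_{\alpha,p}$ against the translated-and-dilated bump $\overline{w}_\epsilon$ from the proof of \eqref{partial from below and above} and comparing energies, exactly as the $o(1)$ term in \eqref{almost sharp} was extracted.
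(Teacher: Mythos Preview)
Your setup is exactly right: rescale around the maximum by setting $\widetilde w_n(z)=\alpha_n^{-2/(p-2)}w_{\alpha_n}(-\tau_{\alpha_n}e_{m+1}+z/\alpha_n)$, use Lemma~\ref{partial maximum value asymptotic} and \eqref{bound of gradient} to control $\|\widetilde w_n\|_\infty$ and $\|\nabla\widetilde w_n\|_{L^2}$, and note that the rescaled domain exhausts $\R^{m+1}$ under the contradiction hypothesis. This is precisely what the paper does.

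The gap is in your identification of the limiting weight. Plugging $z=-\tau_{\alpha}e_{m+1}+\zeta/\alpha$ into $h_\alpha$ one finds, for $\zeta$ in a fixed compact set,
\[
\frac{|z|-z_{m+1}}{2}=\tau_\alpha-\frac{\zeta_{m+1}}{\alpha}+O(\alpha^{-2}),
\qquad
\Bigl(\tfrac{|z|-z_{m+1}}{2}\Bigr)^{\alpha/2}\approx \tau_\alpha^{\alpha/2}\,e^{-\zeta_{m+1}/(2\tau_\alpha)}.
\]
The prefactor $\tau_\alpha^{\alpha/2}$ is the whole point: since $\tau_\alpha<1$, one has $\tau_\alpha^{\alpha/2}\le e^{-\alpha(1-\tau_\alpha)/2}$, and under the contradiction hypothesis $\alpha(1-\tau_\alpha)\to\infty$ this tends to $0$. (More directly, $h_\alpha(z)\le |z|^{(\alpha-2)/2}$ by \eqref{ineq weights}, and $|z|\le \tau_\alpha+|\zeta|/\alpha<1$ with $(\tau_\alpha+|\zeta|/\alpha)^{\alpha/2}\approx e^{-(\alpha(1-\tau_\alpha)-|\zeta|)/2}\to 0$.) Hence $h_{\alpha_n}\bigl(-\tau_{\alpha_n}e_{m+1}+\zeta/\alpha_n\bigr)\to 0$ in $L^\infty_{\mathrm{loc}}(\R^{m+1})$, \emph{not} to $c\,e^{\pm\zeta_{m+1}/2}$. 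The limit equation is therefore $-\Delta w=0$ on $\R^{m+1}$ with $w\ge 0$, $w(0)\ge C_1>0$, $w$ bounded and $w\in\mathcal D^{1,2}(\R^{m+1})$, and the contradiction is the classical Liouville theorem---exactly as in the paper.

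Your proposed contradiction route is thus built on an incorrect limit problem; moreover, neither of the two arguments you sketch for it would work as stated. The Harnack/mean-value claim (``$\widetilde w$ stays bounded below on translates marching to $+\infty$'') fails in general for positive superharmonic functions (think of $|x|^{-(m-1)}$), and the Pohozaev/Liouville nonexistence for $-\Delta w=c\,e^{z_{m+1}/2}w^{p-1}$ on $\R^{m+1}$ is not a result you can just invoke. Fortunately none of this is needed: once you see the weight degenerates, the proof is one line.
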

\begin{proof}
By contradiction assume that there exists a sequence $(\alpha_n)$ such that
\[
\alpha_n \rt \infty, \quad \alpha_n (1- \tau_{\alpha_n}) \rt \infty \quad \text{as} \quad n \rt \infty.
\]
Set 
\[
\widetilde{w}_{\alpha_n} (z) = \alpha^{-2/(p-2)} w_{\alpha_n} \left( \frac{z}{\alpha_n} - \tau_{\alpha_n} e_{m+1} \right), 
\]
for $z \in \Omega_n : =B_{m+1}(\alpha_n \tau_{\alpha_n} e_{m+1}, \alpha_n)$. Then
\[
\left\{
\begin{array}{l}
 -\Delta \widetilde{w}_{\alpha_n} = h_{\alpha_n}\left( \frac{z}{\alpha_n} - \tau_{\alpha_n} e_{m+1} \right) (\widetilde{w}_{\alpha_n})^{p -1}, \ z \in\Omega_n \ \text{with} \ \widetilde{w}_{\alpha_n} = 0 \ \text{on} \ \partial \Omega_n, \\ \\
 
 0 < C_1 \leq \widetilde{w}_{\alpha_n} (0) = \max_{\Omega_n} \widetilde{w}_{\alpha_n}  \leq C_2 , \quad  \text{as} \quad n \rt \infty \ \text{by \eqref{E1}},\\ \\
 
(\widetilde{w}_{\alpha_n}) \quad \text{is bounded in $\mathcal{D}^{1,2}(\R^{m+1})$ by  \eqref{bound of gradient}}, \\ \\

\Omega_n \rt \R^{m+1} \quad \text{as} \quad n \rt \infty, \\ \\

h_{\alpha_n}\left( \frac{z}{\alpha_n} - \tau_{\alpha_n} e_{m+1} \right) \rt 0 \quad L^{\infty}_{loc}(\R^{m+1}) \quad \text{as} \quad n \rt \infty.
\end{array}
\right.
\]
As a consequence, we obtain $w \in \mathcal{D}^{1,2}(\R^{m+1})$ a bounded positive solution of 
\[
-\Delta w = 0 \quad \text{in} \quad \R^{m+1}, \quad 
\] 
which contradicts the classical Liouville's theorem. 
\end{proof}

\begin{proposition}\label{proposition conv d12}
 We have the convergence
\begin{equation}\label{conv d12}
 \int_{\R^{m+1}_+} | \nabla \widehat{w}_{\alpha} - \nabla w|^2 dz \rt 0 \quad \text{as} \quad \alpha \rt \infty,
\end{equation}
 where
 \[
 \widehat{w}_{\alpha} (z) = \alpha^{-2/(p-2)} w_{\alpha} \left( \frac{z}{\alpha} - e_{m+1}\right), \quad z \in \Omega_{\alpha}: = B_{m+1}(\alpha e_{m+1}, \alpha)
 \]
 and, up to normalizing, $w$ minimizes $m_{1/2, p}$.
 \end{proposition}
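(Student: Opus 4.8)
The plan is to carry out a blow-up/compactness analysis for the family $(\widehat{w}_\alpha)$, interpreting Proposition~\ref{proposition conv d12} along an arbitrary sequence $\alpha=\alpha_n\to\infty$ and up to a subsequence. Pushing the change of variables $z\mapsto z/\alpha-e_{m+1}$ through \eqref{partial henon equation reduced unit ball}, $\widehat{w}_\alpha$ is axially symmetric with respect to $\R e_{m+1}$ and solves $-\Delta\widehat{w}_\alpha=h_\alpha\!\left(\tfrac{z}{\alpha}-e_{m+1}\right)\widehat{w}_\alpha^{\,p-1}$ in $\Omega_\alpha$, $\widehat{w}_\alpha=0$ on $\partial\Omega_\alpha$, $\widehat{w}_\alpha>0$. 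Since $\widehat{w}_\alpha$ and $\overline{w}_\alpha$ differ only by the translation $z\mapsto z-\alpha e_{m+1}$, estimate \eqref{bound of gradient} keeps $\int_{\Omega_\alpha}|\nabla\widehat{w}_\alpha|^2$ in a fixed interval $[C_1,C_2]$ and \eqref{E1} keeps $\|\widehat{w}_\alpha\|_\infty$ in $[C_1,C_2]$, the maximum being attained at $z_\alpha^\ast=\alpha(1-\tau_\alpha)e_{m+1}$, which is bounded by Lemma~\ref{first estimate maximum point}; likewise $\int_{\Omega_\alpha}h_\alpha\!\left(\tfrac{z}{\alpha}-e_{m+1}\right)\widehat{w}_\alpha^{\,2}$ is bounded by \eqref{l2}. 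Finally $\Omega_\alpha=\{|z|^2<2\alpha z_{m+1}\}\to\R^{m+1}_+$, the boundaries $\partial\Omega_\alpha$ are, on any fixed ball, uniformly smooth graphs $z_{m+1}=\alpha-\sqrt{\alpha^2-|z'|^2}$ flattening onto $\{z_{m+1}=0\}$, and, arguing exactly as for \eqref{exponential}, $h_\alpha\!\left(\tfrac{z}{\alpha}-e_{m+1}\right)\to e^{-z_{m+1}/2}$ uniformly on compact subsets of $\overline{\R^{m+1}_+}$, while $h_\alpha\le 1$ everywhere by \eqref{ineq weights}.

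First I would extract a weak limit: extending $\widehat{w}_\alpha$ by zero, the $\mathcal{D}^{1,2}$ bound gives $\widehat{w}_\alpha\rightharpoonup w$ in $\mathcal{D}^{1,2}(\R^{m+1})$ along a subsequence, with $w\in\mathcal{D}^{1,2}_0(\R^{m+1}_+)$ since $\Omega_\alpha\to\R^{m+1}_+$. The $L^\infty$ bound on the right-hand sides, the uniform smoothness of the $\Omega_\alpha$ up to the origin, and De Giorgi--Nash--Moser combined with interior and boundary Schauder estimates yield uniform $C^{2,\gamma}$ bounds on compact subsets of $\R^{m+1}_+$ and uniform $C^{1,\gamma}$ bounds on compact subsets of $\overline{\R^{m+1}_+}$, so that $\widehat{w}_\alpha\to w$ in $C^2_{loc}(\R^{m+1}_+)$ and in $C^1_{loc}(\overline{\R^{m+1}_+})$; passing to the limit in the equation, $w\ge 0$ solves $-\Delta w=e^{-z_{m+1}/2}w^{p-1}$ in $\R^{m+1}_+$ with $w=0$ on $\partial\R^{m+1}_+$. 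To get $w\not\equiv 0$ I would sharpen Lemma~\ref{first estimate maximum point}: the uniform boundary gradient bound near the origin gives $\widehat{w}_\alpha(z_\alpha^\ast)\le C\,\mathrm{dist}(z_\alpha^\ast,\partial\Omega_\alpha)=C\,\alpha(1-\tau_\alpha)$, so by \eqref{E1} the quantity $\alpha(1-\tau_\alpha)$ stays bounded away from $0$; hence $z_\alpha^\ast\to z^\ast=\ell'e_{m+1}$ with $\ell'>0$ along the subsequence and $w(z^\ast)=\lim\widehat{w}_\alpha(z_\alpha^\ast)\ge C_1>0$ by local uniform convergence (and then $w>0$ by the strong maximum principle).

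The crucial point is tightness: for every $\varepsilon>0$ there should exist $R$ with $\int_{\Omega_\alpha\cap\{|z|>R\}}h_\alpha\!\left(\tfrac{z}{\alpha}-e_{m+1}\right)\widehat{w}_\alpha^{\,p}<\varepsilon$ for all large $\alpha$. I would derive this from the bound $\int_{\{|z|>R\}}h_\alpha(\cdots)\widehat{w}_\alpha^{\,p}\le\big(\sup_{\{|z|>R\}\cap\Omega_\alpha}\widehat{w}_\alpha\big)^{p-2}\int_{\Omega_\alpha}h_\alpha(\cdots)\widehat{w}_\alpha^{\,2}$, the last factor being controlled by \eqref{l2}, provided one shows that $\sup_{\{|z|>R\}\cap\Omega_\alpha}\widehat{w}_\alpha\to 0$ as $R\to\infty$, uniformly in $\alpha$; this is where the axial symmetry of $\widehat{w}_\alpha$ must be used quantitatively — a Strauss-type decay estimate in the $z'$-variables handles $|z'|$ large, while the decay in $z_{m+1}$ recorded by \eqref{l2} (the weight $h_\alpha(\tfrac{z}{\alpha}-e_{m+1})$ being comparable to $e^{-z_{m+1}/2}$ on the relevant range) handles $z_{m+1}$ large. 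Granting tightness, the locally uniform convergence $h_\alpha(\cdots)\widehat{w}_\alpha^{\,p}\to e^{-z_{m+1}/2}w^p$ gives $\int_{\Omega_\alpha}h_\alpha(\cdots)\widehat{w}_\alpha^{\,p}\to\int_{\R^{m+1}_+}e^{-z_{m+1}/2}w^p$; since $\int_{\Omega_\alpha}|\nabla\widehat{w}_\alpha|^2=\int_{\Omega_\alpha}h_\alpha(\cdots)\widehat{w}_\alpha^{\,p}$ and, testing the limit equation against $w$, $\int_{\R^{m+1}_+}|\nabla w|^2=\int_{\R^{m+1}_+}e^{-z_{m+1}/2}w^p$, the Dirichlet norms converge; together with $\widehat{w}_\alpha\rightharpoonup w$ in the Hilbert space $\mathcal{D}^{1,2}$ this is precisely \eqref{conv d12}.

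Finally I would identify $w$. A direct scaling computation shows that the Rayleigh quotient of $\widehat{w}_\alpha$ for its equation in $\Omega_\alpha$ equals $\alpha^{-[2(m+1)-p(m-1)]/p}S'_{\alpha,p}$, which by \eqref{almost sharp} is $\le m_{1/2,p}+o(1)$; by the strong convergence just obtained this quotient converges to $\int_{\R^{m+1}_+}|\nabla w|^2\big/\big(\int_{\R^{m+1}_+}e^{-z_{m+1}/2}w^p\big)^{2/p}$, so that this number is $\le m_{1/2,p}$, while the reverse inequality holds by the definition of $m_{1/2,p}$ (using $w\not\equiv 0$); hence there is equality and, after normalizing so that $\int_{\R^{m+1}_+}e^{-z_{m+1}/2}w^p=1$, $w$ attains $m_{1/2,p}$. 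I expect the tightness step — excluding loss of weighted $L^p$-mass near infinity along the long, thin domains $\Omega_\alpha$, where $e^{-z_{m+1}/2}$ alone gives no control in the $z'$-directions — to be the main obstacle; the secondary delicate point, that the concentration does not sit on $\partial\R^{m+1}_+$ (the alternative $\ell'=0$), is disposed of by the boundary gradient estimate above.
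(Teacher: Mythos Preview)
Your overall architecture (weak compactness in $\mathcal{D}^{1,2}$, local $C^1$-convergence via elliptic regularity, identification of the limit equation, then upgrading to strong convergence and recognizing $w$ as a minimizer) matches the paper. The substantive difference is in how you pass from weak to strong convergence, and this is where your proposal runs into trouble.

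The paper does \emph{not} establish tightness of the weighted $L^p$-mass via pointwise decay. Instead it uses an energy squeeze: on the one hand \eqref{almost sharp} gives
\[
\int_{\Omega_\alpha}|\nabla\widehat w_\alpha|^2\;=\;\alpha^{-[2(m+1)-p(m-1)]/(p-2)}(S'_{\alpha,p})^{p/(p-2)}\;\le\;m_{1/2,p}^{p/(p-2)}+o(1),
\]
while on the other hand, once $w$ is a nontrivial solution of the limit problem, the definition of $m_{1/2,p}$ forces $\int_{\R^{m+1}_+}|\nabla w|^2\ge m_{1/2,p}^{p/(p-2)}$. Splitting the Dirichlet integral of $\widehat w_\alpha$ into the piece on $B_{R,\alpha}$ (which, by $C^1_{loc}$-convergence, tends to $\int_{B_R\cap\R^{m+1}_+}|\nabla w|^2$) and its complement, the two bounds pinch and the tail $\int_{\Omega_\alpha\setminus B_{R,\alpha}}|\nabla\widehat w_\alpha|^2$ is forced to be $o(1)+o_R(1)$. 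Strong convergence then follows by a short computation. No decay of $\widehat w_\alpha$ at infinity is needed.

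Your route instead hinges on showing $\sup_{\{|z|>R\}\cap\Omega_\alpha}\widehat w_\alpha\to 0$ uniformly in $\alpha$, and the two mechanisms you invoke are both problematic. For the $z'$-direction, a Strauss-type bound of the form $|\widehat w_\alpha(z',z_{m+1})|\le C|z'|^{-(m-1)/2}\|\widehat w_\alpha\|_{H^1}$ would require a uniform $H^1$ bound, but you only have $\|\nabla\widehat w_\alpha\|_{L^2}\le C$ together with $\|\widehat w_\alpha\|_{L^\infty}\le C$ on the ball $\Omega_\alpha$ of volume $\sim\alpha^{m+1}$, so $\|\widehat w_\alpha\|_{L^2}$ is not uniformly bounded; the $\mathcal D^{1,2}$-version of Strauss applied slice-by-slice only controls $\|\nabla_{z'}\widehat w_\alpha(\cdot,z_{m+1})\|_{L^2(\R^m)}$, which need not be uniform in $z_{m+1}$. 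For the $z_{m+1}$-direction, \eqref{l2} is an integral bound $\int h_\alpha(\cdot)\widehat w_\alpha^2\le C$, not a pointwise one, and does not by itself yield $\widehat w_\alpha(z)\to 0$ as $z_{m+1}\to\infty$. So the tightness step, which you rightly flag as the main obstacle, is a genuine gap in your argument; the paper simply sidesteps it with the energy comparison above. (Your treatment of $w\not\equiv 0$ via the boundary gradient bound, by contrast, is a clean addition to what the paper leaves implicit.)
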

\begin{proof}
It follows from \eqref{bound of gradient} that $(\widehat{w}_{\alpha})$ remains bounded in $\mathcal{D}^{1,2}_0(\R^{m+1}_+)$ as $\alpha \rt \infty$. Then there exist $w \in \mathcal{D}^{1,2}_0(\R^{m+1}_+)$ such that $\widehat{w}_{\alpha} \fraco w$ in $\mathcal{D}^{1,2}_0(\R^{m+1}_0)$ as $\alpha \rt \infty$. Observe that
\[
-\Delta \widehat{w}_{\alpha}(z) = h_{\alpha}\left( \frac{z}{\alpha} - e_{m+1} \right) (\widehat{w}_{\alpha})^{p-1}(z) \quad \text{in} \quad \Omega_{\alpha} \quad \text{and} \quad \widehat{w}_{\alpha} = 0 \quad \text{on} \quad \partial \Omega_{\alpha}.
\]
Also observe that
\[
0 \leq h_{\alpha}\left( \frac{z}{\alpha} - e_{m+1} \right) \leq 1 \quad \forall \, z \in \Omega_{\alpha}
\]
and from \eqref{E2}, there exist $C_1, C_2 >0$ such that
\[
C_1 \leq \widehat{w}_{\alpha}(z) \leq C_2 \quad \forall \, z \in \Omega_{\alpha}.
\]
Then, from classical regularity results for second order elliptic equations as in \cite{agmon-douglis-nirenberg} and classical Sobolev imbeddings, we obtain that 
\begin{equation}\label{convergence c1 loc}
\widehat{w}_{\alpha} \rt w \quad \text{in}  \quad C^1_{loc}(\R^{m+1}_+).
\end{equation}
 Now observe that
\[
h_{\alpha}\left( \frac{z}{\alpha} - e_{m+1} \right) = \left. \left( \frac{|z - \alpha e_{m+1}| - (z_{m+1} - \alpha)}{2\alpha} \right)^{\alpha/2} \middle/ \left| \frac{z}{\alpha} - e_{m+1} \right| \right. \forall \, z \in \Omega_{\alpha}.
\]
Then, as we did at \eqref{exponential}, we conclude that $w$ solves
\begin{equation}\label{limit equation *}
 \left\{
\begin{array}{l}
 -\Delta w = e^{- z_{m+1}/2} w^{p-1} \quad \text{in} \quad \R^{m+1}_+,\\
 w> 0 \quad \text{in} \quad \R^{m+1}_+ \quad \text{and} \quad w \in \mathcal{D}^{1,2}_0(\R^{m+1}_+).
\end{array}
 \right.
\end{equation}
Then, from \eqref{limit equation *} and from the definition of $m_{1/2,p}$ we conclude that
\begin{equation}\label{inequality level limit}
\int_{\R^{m+1}_+} | \nabla w|^2 dz  = \int_{\R^{m+1}_+} e^{- z_{m+1}/2} w^{p} dz \geq m_{1/2,p}^{p/(p-2)}.
\end{equation}

With $R> 0$ large with $R < \alpha$ we define
\[
B_{R, \alpha} = \left\{ z; \frac{z}{\alpha} - e_{m+1} \in B_{m+1} (-e_{m+1}, R/\alpha) \cap B_{m+1}(0,1) \right\}.
\]
From \eqref{almost sharp} and with the change of variables $x = \frac{z}{\alpha} - e_{m+1}$ we have
\begin{multline*}
\left( m_{1/2,p}^{p/{p-2}} + o(1) \right) \alpha^{[2(m+1) - p (m-1)]/(p-2)} \geq (S'_{\alpha,p})^{p/(p-2)}\\
 = \int_{B_{m+1}(0,1)} | \nabla w_{\alpha}|^2 dx  = \int_{B_{m+1}(0,1) \cap B_{m+1} (-e_{m+1}, R/\alpha)} | \nabla w_{\alpha}|^2 dx \\
+  \int_{B_{m+1}(0,1) \menos B_{m+1} (-e_{m+1}, R/\alpha)} | \nabla w_{\alpha}|^2 dx \\
= \alpha^{[2(m+1) - p (m-1)]/(p-2)} \left[ \int_{B_{R,\alpha}} \!\!\!\! | \nabla \widehat{w}_{\alpha}|^2 dz + \int_{B_{m+1}(\alpha e_{m+1}, \alpha) \menos B_{R,\alpha}} \!\!\!\!\!\!\!\! | \nabla \widehat{w}_{\alpha}|^2 dz  \right]\\ \geq \alpha^{[2(m+1) - p (m-1)]/(p-2)}\int_{B_{R,\alpha}} | \nabla \widehat{w}_{\alpha}|^2 dz.
\end{multline*}

Then from \eqref{convergence c1 loc} and \eqref{inequality level limit} it follows that
\begin{multline*}
\left( m_{1/2,p}^{p/{p-2}} + o(1) \right) \geq \int_{B_{R,\alpha}} | \nabla \widehat{w}_{\alpha}|^2 dz = \int_{\R^{m+1}_+ \cap B_{m+1}(0, R)} \!\!\!\! | \nabla \widehat{w}_{\alpha}|^2 dz + o(1) \\
=  \int_{\R^{m+1}_+ \cap B_{m+1}(0, R)} | \nabla w|^2 dz + o(1) \geq m_{1/2,p}^{p/(p-2)} + o_{R}(1) + o(1).
\end{multline*}

Hence we obtain
\[
\gd{\int_{B_{m+1}(\alpha e_{m+1}, \alpha) \menos B_{R, \alpha}}  | \nabla \widehat{w}_{\alpha}|^2 dz  = o(1) + o_R(1) \quad \text{when} \quad R< \alpha, \, R \rt \infty,}
\]
\begin{multline*}
\gd{\int_{B_{R, \alpha}}  | \nabla \widehat{w}_{\alpha}|^2 dz  = \int_{\R^{m+1}_{+} \cap B_{m+1}(0,R)} | \nabla w|^2 dz + o(1)} \\
= m_{1/2,p}^{p/(p-2)} + o_{R}(1) + o(1) \quad \text{when} \quad R< \alpha, \, R \rt \infty.
\end{multline*}

Then, from  \eqref{convergence c1 loc} and since $( \widehat{w}_{\alpha})$ is bounded in $\mathcal{D}^{1,2}_0(\R^{m+1}_+)$, it follows that
\begin{multline*}
  \int_{\R^{m+1}_+} | \nabla \widehat{w}_{\alpha} - \nabla w|^2 dz =  \int_{\R^{m+1}_+ \cap B_{m+1}(0, R)} | \nabla \widehat{w}_{\alpha} - \nabla w|^2 dz \\
   +  \int_{\R^{m+1}_+ \menos B_{m+1}(0,R)} | \nabla \widehat{w}_{\alpha} - \nabla w|^2 dz\\
  \leq o(1) + \int_{B_{m+1}(\alpha e_{m+1}, \alpha) \menos B_{R, \alpha}}  | \nabla \widehat{w}_{\alpha} - \nabla w|^2 dz + \int_{\R^{m+1}_+ \menos B_{m+1}(0,R)} | \nabla w|^2 dz\\
  = o(1) + o_{R}(1) - 2 \int_{B_{m+1}(\alpha e_{m+1}, \alpha) \menos B_{R, \alpha}}  \nabla \widehat{w}_{\alpha}  \nabla w dz \leq o(1) + o_{R}(1) \\
  + 2 C \left(  \int_{B_{m+1}(\alpha e_{m+1}, \alpha) \menos B_{R, \alpha}}  |\nabla w|^2 dz\right)^{1/2} = o(1) + o_{R}(1).
\end{multline*}

Hence we conclude that
\[
 \int_{\R^{m+1}_+} | \nabla \widehat{w}_{\alpha} - \nabla w|^2 dz \rt 0 \quad \text{as} \quad \alpha \rt \infty,
\] 
and that $\int_{\R^{m+1}_+} | \nabla w|^2 dz = m_{1/2, p}^{p/(p-2)}$ and so $w$ minimizes $m_{1/2,p}$. 
\end{proof}

\begin{proposition}\label{proposition sharp maximum point}
 There exists $l > 0$ such that
 
\begin{equation}\label{sharp maximum point}
 \alpha (1 - \tau_{\alpha}) \rt l \quad \text{as} \quad \alpha \rt \infty.
\end{equation}
\end{proposition}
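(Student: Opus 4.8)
The plan is to prove \eqref{sharp maximum point} by establishing that every limit, along a subsequence, of the quantity $d_\alpha:=\alpha(1-\tau_\alpha)$ equals one and the same positive number, namely the height above $\partial\R^{m+1}_+$ of the (unique) maximum point of the limit profile $w$ produced in Proposition~\ref{proposition conv d12}. Since, by Lemma~\ref{first estimate maximum point}, the family $(d_\alpha)$ is bounded, it suffices to identify all of its subsequential limits.

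First I would observe that, because $\widehat{w}_\alpha(z)=\alpha^{-2/(p-2)}w_\alpha(z/\alpha-e_{m+1})$ and $\|w_\alpha\|_\infty=w_\alpha(-\tau_\alpha e_{m+1})$, the rescaled function $\widehat{w}_\alpha$ attains its maximum exactly at the point $d_\alpha e_{m+1}\in\Omega_\alpha$, and that $\mathrm{dist}(d_\alpha e_{m+1},\partial\Omega_\alpha)=\alpha-\alpha\tau_\alpha=d_\alpha$. Fix any sequence $\alpha_n\to\infty$ and, using Lemma~\ref{first estimate maximum point}, pass to a subsequence with $d_{\alpha_n}\to d_\ast\in[0,\infty)$. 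I would then rule out $d_\ast=0$ as follows. On $\Omega_{\alpha_n}$ one has $-\Delta\widehat{w}_{\alpha_n}=h_{\alpha_n}(\cdot/\alpha_n-e_{m+1})\,\widehat{w}_{\alpha_n}^{\,p-1}$ with $0\le h_{\alpha_n}\le1$ and $0\le\widehat{w}_{\alpha_n}\le C_2$ by \eqref{E1}, so the right hand side is uniformly bounded; moreover the portion of $\partial\Omega_{\alpha_n}$ through the origin is a piece of the sphere of radius $\alpha_n$, which flattens onto $\{z_{m+1}=0\}$ with uniformly bounded curvature. A standard boundary barrier therefore yields $\widehat{w}_{\alpha_n}(z)\le C\,\mathrm{dist}(z,\partial\Omega_{\alpha_n})$ for $z$ in a fixed neighbourhood of the origin, with $C$ independent of $n$. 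Evaluated at the maximum point this gives $\|\widehat{w}_{\alpha_n}\|_\infty\le C\,d_{\alpha_n}$, contradicting the lower bound $\|\widehat{w}_{\alpha_n}\|_\infty\ge C_1>0$ of \eqref{E1} unless $d_\ast>0$.

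With $d_\ast>0$, the point $d_\ast e_{m+1}$ lies in the interior $\R^{m+1}_+$, so the $C^1_{loc}(\R^{m+1}_+)$ convergence \eqref{convergence c1 loc} gives $\|\widehat{w}_{\alpha_n}\|_\infty=\widehat{w}_{\alpha_n}(d_{\alpha_n}e_{m+1})\to w(d_\ast e_{m+1})$, and since $\widehat{w}_{\alpha_n}\to w$ locally uniformly on all of $\R^{m+1}_+$, this forces $w(d_\ast e_{m+1})=\|w\|_\infty$; that is, $w$ attains its maximum at $d_\ast e_{m+1}$. Now $w$ is axially symmetric with respect to $\R e_{m+1}$ (it is a locally uniform limit of such functions), it minimizes $m_{1/2,p}$, and it solves the half-space limit problem \eqref{limit equation *} by Proposition~\ref{proposition conv d12}. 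A moving planes argument in the first $m$ coordinates (the weight $e^{-z_{m+1}/2}$ is independent of $z'$) shows that $w$ is strictly radially decreasing in $|z'|$, so its maximum point lies on the half-axis $\{0\}\times(0,\infty)$; call its last coordinate $l$, so that $d_\ast=l>0$. Since ground states of \eqref{limit equation *} are unique up to $z'$-translations, $l$ is the same for every subsequential limit, and hence $\alpha(1-\tau_\alpha)=d_\alpha\to l$, which is \eqref{sharp maximum point}.

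The step I expect to be the main obstacle is the very last one: the uniqueness, up to the obvious symmetries, of ground states of the half-space problem \eqref{limit equation *} with exponential weight, which is what guarantees that $l$ does not depend on the chosen subsequence. The radial monotonicity in $z'$ is routine, but the uniqueness of the ground state — hence of its maximum height — is the delicate analytic input; it is the exact analogue of, and transfers directly from, the corresponding result for the limit problem of the classical H\'enon equation in \cite{cao-peng-yan}, the two weights $e^{-z_{m+1}}$ and $e^{-z_{m+1}/2}$ being related by the rescaling $w(z)\mapsto(1/4)^{1/(p-2)}w(z/2)$ already used in \eqref{w and v}.
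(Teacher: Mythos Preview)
Your argument follows the same line as the paper's: locate the maximum of $\widehat w_\alpha$ at $\alpha(1-\tau_\alpha)\,e_{m+1}$, combine the boundedness of Lemma~\ref{first estimate maximum point} with the $C^1_{loc}$ convergence established in Proposition~\ref{proposition conv d12}, and read off the limit from the location of the maximum of $w$, placed on the $e_{m+1}$-axis by moving planes. The paper's proof is much terser: it works with the full family rather than subsequences, simply taking Proposition~\ref{proposition conv d12} as convergence to one fixed $w$, and then asserts in one line that the maximum point of $\widehat w_\alpha$ converges to the maximum point of $w$, which is $l\,e_{m+1}$ for some $l>0$.

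The two substantive additions in your write-up are the boundary barrier ruling out $d_\ast=0$ and the explicit appeal to uniqueness of ground states of \eqref{limit equation *}. The paper does neither: the first is swept into the bare assertion that the max point of $w$ lies strictly inside $\R^{m+1}_+$, and the second is absorbed into the \emph{statement} of Proposition~\ref{proposition conv d12} (full-family convergence), so that only one $w$ is ever in play. Your version is more careful, but be aware that \cite{cao-peng-yan} does not actually prove a uniqueness theorem for half-space ground states; so if you insist on the subsequence route you are invoking an ingredient that neither the present paper nor its cited references supplies, whereas the paper sidesteps the issue by the way Proposition~\ref{proposition conv d12} is phrased.
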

\begin{proof}
We have proved that $\widehat{w}_{\alpha} \rt w$ in $C^1_{loc}(\R^{m+1}_+)$ as $\alpha \rt \infty$ and from Lemma \ref{first estimate maximum point} we know that $\alpha (1 -\tau_{\alpha})$ remains bounded as $\alpha \rt \infty$.

Let $z_{\alpha}$ be the maximum point of $\widehat{w}_{\alpha}$. Then
\begin{equation}\label{maximum point limit}
- \tau_{\alpha} e_{m+1} = \frac{z_{\alpha}}{\alpha} - e_{m+1} \quad \text{which implies} \quad z_{\alpha} = \alpha(1 - \tau_{\alpha}) e_{m+1}.
\end{equation}
and we obtain that the maximum point of $\widehat{w}_{\alpha}$ converges to the maximum point of $w$, which is precisely $l e_{m+1}$ for some $l> 0$, which follows from \eqref{limit equation *} and the moving planes technique as in \cite{gidas-ni-nirenberg}. Indeed $w$ is axially symmetric with respect to $\R e_{m+1} \con \R^{m+1}$ and decreasing with respect to $|z'|$. Therefore, from \eqref{maximum point limit} we conclude that
\[
\alpha(1- \tau_{\alpha}) \rt l \quad \text{as} \quad \alpha \rt \infty.
\] 
\end{proof}

\begin{proof}[Proof of Proposition {\rm \ref{proposition asymptotic partial henon reduced}}]
From \eqref{conv d12} we obtain that
\begin{multline*}
m_{1/2, p}^{p/(p-2)} = \int_{\R^{m+1}_+} | \nabla w|^2 dz = \int_{B_{m+1}(\alpha e_{m+1}, \alpha)} | \nabla \widehat{w}_{\alpha}|^2 dz + o(1)\\
= \alpha^{[ p (m-1)- 2(m+1)]/(p-2)} \int_{B_{m+1}(0,1)} | \nabla w_{\alpha}|^2 dx + o(1)\\
 =  \alpha^{[ p (m-1)- 2(m+1)]/(p-2)} (S'_{\alpha,p})^{p/(p-2)}  + o(1).
\end{multline*}
Therefore
\[
 \dfrac{S'_{\alpha,p}}{\alpha^{[2(m+1) - p(m-1)]/p}} = m_{1/2,p} + o(1) \quad \text{as} \quad \alpha \rt \infty. 
\] 
\end{proof}

\medskip

\begin{proof}[Proof of Theorem {\rm \ref{theorem partial henon concentration}}] It follows from Lemma \ref{partial maximum value asymptotic}, Propositions \ref{proposition conv d12} and \ref{proposition sharp maximum point}. 
\end{proof}

\section{Hyperplanes preventing diffusion and proof of Theorem \ref{theorem two point concentration}} \label{section two point concentration}

In this section we consider the problem
\begin{equation}\label{hyper-plane of black holes proof}
\left\{
\begin{array}{l}
 -\Delta_{N} u = |z_N|^{\alpha} |u|^{p-2}u, \qquad z= (z_1, \ldots, z_N) \in B_{N}(0,1),\\
 u = 0 \quad \text{on} \quad \partial B_{N}(0,1).
\end{array}
\right.
\end{equation}
where $p$ and $N$ satisfy the conditions from Theorem \ref{theorem two point concentration}. The procedure to study \eqref{hyper-plane of black holes proof} is quite similar to that from Section \ref{section partial}, but due to its technicality we also include some details here.

By the moving planes technique \cite{gidas-ni-nirenberg} we know that any classical positive solution of \eqref{hyper-plane of black holes proof} is such that $u(z_1, \ldots, z_{N-1}, z_N) = u (|(z_1, \ldots, z_{N-1})|, z_N)$ and that $u(\cdot, z_N)$ decreases with respect to $|(z_1, \ldots, z_{N-1})|$. Therefore, if we look for positive solutions of \eqref{hyper-plane of black holes proof} such that \lbk $u(z_1, \ldots, z_{N-1}, z_N) = u (|(z_1, \ldots, z_{N-1})|, |z_N|)$  we obtain that for any such solution, there exists $0 \leq r < 1$ such that
\[
\gd{\max_{(z_1, \ldots, z_N) \in B_{N}(0,1)} = u(r e_N) = u(-r e_N)}.
\]

Now let $u_{\alpha}$ be a least energy solution among the solutions of \eqref{hyper-plane of black holes proof} that depend only on $|(z_1, \ldots, z_{N-1})|$ and $|z_N|$. Then, by the principle of symmetric criticality \cite{palais}, we characterize such solution as a minimizer of a Rellich quotient among the functions in $H^1_0(B_{N}(0,1))$ invariant by the action of the group
\[
\boldsymbol{\mathfrak{G}}_N = \mathcal{O}(N-1) \times \Z_ 2.
\] 
We can assume that $u_{\alpha} > 0$ in $B_{N}(0,1)$. So arguing as in the previous paragraph, there exists $0 \leq r_{\alpha} < 1$ such that
\[
\mathbf{M}_{\alpha}: = \max_{(z_1, \ldots, z_N) \in B_{N}(0,1)} u_{\alpha}(z_1, \ldots, z_N) = u_{\alpha}(r_{\alpha} e_n) = u_{\alpha}(-r_{\alpha} e_n).
\]

We set
\[
\mathcal{H}_{D,N} : = \{ u  \in H^1_ 0(B_{N}(0,1)); g u = u \; \forall \; g \in \boldsymbol{\mathfrak{G}}_N  \},
\]
the space of functions in $H^1_ 0(B_{N}(0,1))$ that are axially symmetric with \linebreak respect to $\R e_{N} \con \R^{N}$ and symmetric with respect to $x_N$. We also set
\[
\gd{ K_{\alpha,p} : = \inf_{\psi \in H^1_0(B_N(0,1)) \menos \{ 0\}} \dfrac{\int | \nabla \psi|^2 dz}{\left(\int |z|^{\alpha} |\psi|^p dz \right)^{2/p}} }
\]
and
\[
\gd{K'_{\alpha, p} : =  \inf_{w \in\mathcal{H}_{D,N}\menos \{ 0\}} \dfrac{\int | \nabla w|^2 dz}{ \left( \int |z_N|^{\alpha} |w|^p dz\right)^{2/p}} }.
\]

Then, from \cite{smets-willem,pacella}, we have that any minimizer $\psi$ of $K_{\alpha,p}$, up to rotation, is such that $\psi$ is axially symmetric with respect to $\R e_N$. Then, since $|z_N|^{\alpha} \leq |z|^{\alpha}$, we conclude that 
\begin{equation}\label{sobolev constants 1 hyper plane}
K'_{\alpha, p} > K_{\alpha,p} \quad \text{for every}  \quad \alpha > 0.
\end{equation}

We recall that 
\begin{equation}\label{asymptotic henon hyper plane}
 \dfrac{K_{\alpha,p}}{\alpha^{[2N - p(N-2)]/p}} =m_{1,p} + o(1) \quad \text{as} \quad \alpha \rt \infty,
\end{equation}
where
\[
m_{\gamma, p} = \inf \left\{ \int | \nabla w |^2 dz; w \in \mathcal{D}^{1,2}_0(\R^{N}_+), \ \int_{\R^{N}_+} e^{-\gamma z_{N}} |w|^p dz =1\right\},
\]
which is attained for every $\gamma>0$ and $2 < p < \frac{2N}{N-2}$; see \cite[Remark 4.8 and Theorem 2.1]{cao-peng-yan}. In particular, from \eqref{asymptotic henon hyper plane}, there exist $C_1, C_2 >0$ such that
\begin{equation}\label{henon from below and above hyper plane}
C_1 \alpha^{[2N - p(N-2)]/p} \leq K_{\alpha,p} \leq C_2 \alpha^{[2N - p(N-2)]/p} \quad \text{as} \quad \alpha \rt \infty.
\end{equation}

Moreover, the equation
\begin{equation}\label{limit problem henon hyper plane}
-\Delta w = e^{-z_{N}} |w|^{p-2} w \quad \text{in} \quad \R^{N}_+
\end{equation}
is called the limit problem associated to 
\begin{equation}\label{henon hyper plane}
-\Delta u  = |z|^{\alpha} |u|^{p-2}u  \quad \text{in} \quad B_N(0,1), \quad u = 0 \quad \text{on} \quad \partial B_{N}(0,1),
\end{equation}
since after suitable rescaling, we can show that  least energy solutions of \eqref{henon hyper plane} converges to least energy solutions of \eqref{limit problem henon hyper plane} as $\alpha \rt \infty$.

Next we prove that $K'_{\alpha,p}$ may also be controlled as in \eqref{henon from below and above hyper plane}. Indeed we show that the limit problem associated to \eqref{hyper-plane of black holes proof}, for solutions that are axially symmetric with respect to $\R e_{N} \con \R^{N}$ and symmetric with respect to $x_N$, is also \eqref{limit problem henon hyper plane}.

\begin{proposition}\label{proposition asymptotic partial henon reduced hyper plane}
There holds
\begin{equation}\label{asymptotic partial henon reduced hyper plane}
 \dfrac{K'_{\alpha,p}}{\alpha^{[2N - p(N-2)]/p}} = 2^{1 - 2/p}m_{1,p} + o(1) \quad \text{as} \quad \alpha \rt \infty.
\end{equation}
\end{proposition}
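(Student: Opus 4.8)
The plan is to prove \eqref{asymptotic partial henon reduced hyper plane} by establishing matching upper and lower estimates for $K'_{\alpha,p}/\alpha^{[2N-p(N-2)]/p}$, following the scheme of Section \ref{section partial}; the only genuinely new feature is that the weight $|z_N|^\alpha$ concentrates, on the sphere, at the \emph{two} antipodal points $\pm e_N$, and that the $\Z_2$ factor in $\boldsymbol{\mathfrak{G}}_N$ forces a symmetric minimizer to carry one bump near each of them.

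For the upper bound I would fix $\epsilon>0$, choose $w_\epsilon\in C_c^\infty(\R^N_+)$, $w_\epsilon\neq 0$, axially symmetric with respect to $\R e_N$, with $\int_{\R^N_+}|\nabla w_\epsilon|^2\big/\big(\int_{\R^N_+}e^{-s_N}|w_\epsilon|^p\big)^{2/p}<m_{1,p}+\epsilon$ (possible since $m_{1,p}$ is attained), and use the doubly-peaked competitor
\[
\overline{w}_\epsilon(z)=w_\epsilon\big(\alpha z',\,\alpha[(1-|z'|^2)^{1/2}-|z_N|]\big),\qquad z=(z',z_N)\in B_N(0,1),
\]
which belongs to $\mathcal{H}_{D,N}$ for large $\alpha$, is supported in two caps shrinking onto $\pm e_N$, and vanishes near $\{z_N=0\}$. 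Performing the change of variables \eqref{change of variables reduced} on each cap and using the expansion that replaces \eqref{exponential} — now with exponent $\alpha$ rather than $\alpha/2$, so that $|z_N|^\alpha=e^{-s_N+O(\alpha^{-1})}+O(\alpha^{-1})$ on the support — one obtains $\int|\nabla\overline{w}_\epsilon|^2=2\alpha^{2-N}[\int_{\R^N_+}|\nabla w_\epsilon|^2+O(\alpha^{-1})]$ and $\int|z_N|^\alpha\overline{w}_\epsilon^p=2\alpha^{-N}[\int_{\R^N_+}e^{-s_N}w_\epsilon^p+O(\alpha^{-1})]$, the factor $2$ coming from the two non-interacting bumps; hence $K'_{\alpha,p}\le 2^{1-2/p}\alpha^{[2N-p(N-2)]/p}(m_{1,p}+\epsilon+O(\alpha^{-1}))$, and letting $\alpha\to\infty$ and then $\epsilon\to0$ gives $\limsup_\alpha K'_{\alpha,p}/\alpha^{[2N-p(N-2)]/p}\le 2^{1-2/p}m_{1,p}$. (Combined with \eqref{sobolev constants 1 hyper plane} and \eqref{henon from below and above hyper plane} this also yields, as in Section \ref{section partial}, the two-sided power bounds $C_1\alpha^{[2N-p(N-2)]/p}\le K'_{\alpha,p}\le C_2\alpha^{[2N-p(N-2)]/p}$.)

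For the lower bound I would mimic Proposition \ref{proposition conv d12}. Let $w_\alpha>0$ be a minimizer of $K'_{\alpha,p}$, normalized so that $-\Delta w_\alpha=|z_N|^\alpha w_\alpha^{p-1}$ and $\int_{B_N(0,1)}|\nabla w_\alpha|^2=(K'_{\alpha,p})^{p/(p-2)}$; by the moving planes method $w_\alpha$ is axially symmetric, even in $z_N$, and maximal at $\pm r_\alpha e_N$. The analogues of Lemma \ref{partial maximum value asymptotic} ($\|w_\alpha\|_\infty\approx\alpha^{2/(p-2)}$) and Lemma \ref{first estimate maximum point} ($\alpha(1-r_\alpha)$ bounded) go through unchanged, since $|z_N|^\alpha\le|z|^\alpha$ and the weighted $L^2$ bound from \cite[pp. 473 and 474]{cao-peng-yan} used in \eqref{l2} still applies. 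Rescaling about the north pole, $\widehat{w}_\alpha(z)=\alpha^{-2/(p-2)}w_\alpha(e_N+z/\alpha)$ on $\Omega_\alpha=B_N(-\alpha e_N,\alpha)$, the estimates behind \eqref{bound of gradient} and \eqref{l2} make $(\widehat{w}_\alpha)$ a bounded family in $\mathcal{D}^{1,2}_0$ of the limiting half-space $\{z_N<0\}$, with $\widehat{w}_\alpha\to w$ in $C^1_{loc}$; passing to the limit in the weight exactly as in the derivation of \eqref{limit equation *}, $w$ is a positive $\mathcal{D}^{1,2}_0$-solution of \eqref{limit problem henon hyper plane} (up to the reflection $z_N\mapsto-z_N$), so $\int|\nabla w|^2=\int e^{-z_N}w^p\ge m_{1,p}^{p/(p-2)}$. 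The decisive new step is the energy split: since $w_\alpha$ is even in $z_N$, $\int_{B_N(0,1)\cap\{z_N>0\}}|\nabla w_\alpha|^2=\tfrac12(K'_{\alpha,p})^{p/(p-2)}$, and for fixed $R$ the region of $B_N(0,1)$ that corresponds to $B_N(0,R)$ under the rescaling lies in the hemisphere $\{z_N>0\}$ for large $\alpha$; hence $\int_{B_N(0,R)}|\nabla\widehat{w}_\alpha|^2\le\tfrac12\,\alpha^{-[2N-p(N-2)]/(p-2)}(K'_{\alpha,p})^{p/(p-2)}$, and letting $\alpha\to\infty$ and then $R\to\infty$ gives $m_{1,p}^{p/(p-2)}\le\int_{\{z_N<0\}}|\nabla w|^2\le\tfrac12\liminf_\alpha\alpha^{-[2N-p(N-2)]/(p-2)}(K'_{\alpha,p})^{p/(p-2)}$, i.e. $\liminf_\alpha K'_{\alpha,p}/\alpha^{[2N-p(N-2)]/p}\ge(2m_{1,p}^{p/(p-2)})^{(p-2)/p}=2^{1-2/p}m_{1,p}$. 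Together with the upper bound this proves \eqref{asymptotic partial henon reduced hyper plane}; feeding the sharp asymptotic back into the chain forces equality, so in fact $\int_{\{z_N<0\}}|\nabla w|^2=m_{1,p}^{p/(p-2)}$, $w$ minimizes $m_{1,p}$, and all but an $o\big((K'_{\alpha,p})^{p/(p-2)}\big)$ fraction of the energy of $w_\alpha$ concentrates near $\pm e_N$ — which are the facts needed for Theorem \ref{theorem two point concentration} itself.

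The hard part will be the bookkeeping in the lower bound: one must be certain that blowing up about a single pole recaptures \emph{exactly} the half of the total Dirichlet energy that lives in the corresponding hemisphere, with no loss near the equator or across the separating hyperplane $\{z_N=0\}$. The vanishing of the weight on $\{z_N=0\}$ rules out concentration there, the $z_N$-symmetry pins the two hemispherical energies to be equal, and — as in Section \ref{section partial} — the missing quantitative control (that $\int_{B_N(0,1)\setminus\text{(caps)}}|\nabla w_\alpha|^2=o(\alpha^{[2N-p(N-2)]/(p-2)})$) only emerges after the sharp upper bound is combined with the limit-problem inequality $\int|\nabla w|^2\ge m_{1,p}^{p/(p-2)}$.
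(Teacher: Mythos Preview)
Your proposal is correct and follows essentially the same route as the paper: the same symmetrized two-bump competitor for the upper bound, and the same rescaling about one pole together with the $z_N$-evenness energy split and the limit-problem inequality $\int|\nabla w|^2\ge m_{1,p}^{p/(p-2)}$ for the lower bound. The only differences are cosmetic (the paper centers at $-e_N$ and restricts the rescaled domain to the half-ball $\{z_N<\alpha\}$ from the outset, then establishes the full $\mathcal{D}^{1,2}$ convergence before reading off the asymptotic, whereas you extract the $\liminf$ bound directly); the substance is identical.
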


We prove some preliminary lemmas in order to go through the proof of Proposition \ref{proposition asymptotic partial henon reduced hyper plane}.

\begin{lemma}
There exist $C_1, C_2$ positive constants such that
\begin{equation}\label{partial from below and above hyper plane}
C_1 \alpha^{[2N - p(N-2)]/p} \leq K'_{\alpha,p} \leq C_2 \alpha^{[2N - p(N-2)]/p} \quad \text{as} \quad \alpha \rt \infty.
\end{equation}

\end{lemma}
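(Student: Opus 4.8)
The plan is to mirror, with one extra twist, the argument that produced \eqref{partial from below and above} in Section~\ref{section partial}: the lower bound is a soft consequence of facts already recorded, while the upper bound comes from a suitably concentrated test function.

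For the lower bound I would simply combine \eqref{sobolev constants 1 hyper plane}, which gives $K'_{\alpha,p} \geq K_{\alpha,p}$, with the left-hand inequality in \eqref{henon from below and above hyper plane}, obtaining $K'_{\alpha,p} \geq C_1 \alpha^{[2N - p(N-2)]/p}$ for $\alpha$ large.

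For the upper bound, fix $\epsilon > 0$ and choose $w_{\epsilon} \in C_c^{\infty}(\R^N_+)$, $w_{\epsilon} \neq 0$, axially symmetric with respect to $\R e_N$, almost realizing $m_{1,p}$, that is with $\int_{\R^N_+}|\nabla w_{\epsilon}|^2\,ds < (m_{1,p}+\epsilon)\left(\int_{\R^N_+}e^{-s_N}|w_{\epsilon}|^p\,ds\right)^{2/p}$. I would transplant it near the pole $-e_N$ exactly as in the proof of \eqref{partial from below and above} (with $m+1$ replaced by $N$), setting $\overline{w}_{\epsilon}(z) = w_{\epsilon}(\alpha z', \alpha[(1-|z'|^2)^{1/2} + z_N])$; for $\alpha$ large this is supported in $\{z_N < 0\}$, in a ball of radius $O(\alpha^{-1})$ about $-e_N$, and is axially symmetric with respect to $\R e_N$. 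Since membership in $\mathcal{H}_{D,N}$ also requires evenness in $z_N$ — this is the extra twist absent in Section~\ref{section partial} — I would then take $\widetilde{w}_{\epsilon}(z',z_N) := \overline{w}_{\epsilon}(z',z_N) + \overline{w}_{\epsilon}(z',-z_N)$, a legitimate element of $\mathcal{H}_{D,N}$ whose two lobes sit near $\mp e_N$ with disjoint supports for $\alpha$ large, so that both the Dirichlet integral and the weighted $L^p$ integral of $\widetilde{w}_{\epsilon}$ are simply twice those of $\overline{w}_{\epsilon}$.

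It then remains to estimate those two integrals for $\overline{w}_{\epsilon}$, which is the routine change-of-variables computation \eqref{change of variables reduced} with $m+1 \rt N$ (so $x = \alpha e_N + \alpha z$, $dz = \alpha^{-N}\,ds$): the gradient term comes out as $\alpha^{2-N}[\int_{\R^N_+}|\nabla w_{\epsilon}|^2\,ds + O(\alpha^{-1})]$, while on the support of $w_{\epsilon}$ one has $z_N = -1 + s_N/\alpha + O(\alpha^{-2})$, hence $|z_N|^{\alpha} = e^{-s_N + O(\alpha^{-1})}$ and the weighted term comes out as $\alpha^{-N}[\int_{\R^N_+}e^{-s_N}|w_{\epsilon}|^p\,ds + O(\alpha^{-1})]$. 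Plugging $\widetilde{w}_{\epsilon}$ into the Rellich quotient defining $K'_{\alpha,p}$ and using $2 - N + 2N/p = [2N - p(N-2)]/p$ gives $K'_{\alpha,p}/\alpha^{[2N - p(N-2)]/p} \leq 2^{1-2/p}(m_{1,p} + \epsilon) + o(1)$ as $\alpha \rt \infty$; letting $\epsilon \rt 0$ yields the upper bound (with any $C_2 > 2^{1-2/p}m_{1,p}$), and \eqref{partial from below and above hyper plane} follows. I do not expect any serious obstacle: the only points to be careful about are the admissibility and support-disjointness of $\widetilde{w}_{\epsilon}$ (which is what produces the factor $2^{1-2/p}$, irrelevant at the level of orders of magnitude here but precisely the sharp constant that will be needed in Proposition~\ref{proposition asymptotic partial henon reduced hyper plane}) and the bookkeeping in the change of variables, which is verbatim that of Section~\ref{section partial} with $e^{-s_N/2}$ replaced by $e^{-s_N}$.
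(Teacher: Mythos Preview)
Your proposal is correct and follows essentially the same route as the paper. The only cosmetic difference is that the paper builds the evenness in $z_N$ directly into the test function by writing $\overline{u}_{\epsilon}(z) = u_{\epsilon}(\alpha z', \alpha[(1-|z'|^2)^{1/2} - |z_N|])$, which for $\alpha$ large is exactly your symmetrized $\widetilde{w}_{\epsilon}$; the ensuing change of variables, the appearance of $e^{-s_N}$, and the resulting bound $K'_{\alpha,p}/\alpha^{[2N - p(N-2)]/p} \leq 2^{1-2/p}m_{1,p} + o(1)$ are identical to what you describe.
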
 
\begin{proof}
Given $\epsilon >0$, choose $u_{\epsilon} \in C_c^{\infty}(\R^{N}_+)$ such that, $u_{\epsilon} \neq 0$, $u_{\epsilon}$ is axially symmetric with respect to $\R e_{N} \con \R^{N}$ and
\[
\dfrac{\int_{\R^{N}_+} |\nabla u_{\epsilon}(s)|^2 ds}{ \left( \int_{\R^{N}_+} e^{- s_{N}} |u_{\epsilon}(s)|^p ds \right)^{2/p}} < m_{1,p} + \epsilon.
\]

Set
\[
\overline{u}_{\epsilon} (z) = u_{\epsilon} (\alpha z', \alpha[ (1- |z'|^2)^{1/2} - |z_{N}|]), \quad z = (z', z_{N}) \in B_{N}(0,1).
\]
Then, it is easy to see that $\overline{u}_{\epsilon} \in\mathcal{H}_{D,N}$ for any $\alpha>0$.

We will perform the change of variables
\begin{equation}\label{change of variables reduced hyper plane}
x = \alpha e_{N} + \alpha z \quad \text{and} \quad s' = x', \, s_{N} =  x_{N} + \alpha (-1 + (1 - \alpha^{-2} |x'|^2)^{1/2}). 
\end{equation}
Then, since $u_{\epsilon}$ has compact support in $\R^{N}_+$, for any $\alpha$ large we get:
\begin{multline*}
\int_{B_{N}(0,1)} \!\!\!\!\!\!\!\! | \nabla \overline{u}_{\epsilon}|^2 dz = 2 \alpha^2 \!\! \int_{B_{N}(0,1), \ z_N < 0}\!\!  \left\{\sum_{i =1}^{N-1} \left[\left|\partial_i u_{\epsilon}(\alpha z', \alpha[ (1- |z'|^2)^{1/2} + z_{N}]) \right. \right. \right. \\
\left. \left. - \frac{z_i}{(1- |z'|^2)^{1/2}}\partial_{N}u_{\epsilon}(\alpha z', \alpha[ (1- |z'|^2)^{1/2} + z_{N}])\right|^2 \right] \\
\left.+ \left| \partial_{N} u_{\epsilon}(\alpha z', \alpha[ (1- |z'|^2)^{1/2} + z_{N}]) \right|^2 \right\}dz \\
= 2 \alpha^{2-N}\int_{B_{N}(\alpha e_{N}, \alpha), \ x_N < \alpha}  \left\{\sum_{i =1}^{N-1} \left[\left|\partial_i u_{\epsilon}(x', (\alpha^2- |x'|^2)^{1/2} + x_N - \alpha) \right. \right. \right. \\
\left. \left. - \frac{\alpha^{-1} x_i}{(1 - \alpha^{-2}|x'|^2)^{1/2}}\partial_{N}u_{\epsilon}(x', (\alpha^2- |x'|^2)^{1/2} + x_N - \alpha)\right|^2 \right] \\
\left. + \left| \partial_{N} u_{\epsilon}(x', (\alpha^2- |x'|^2)^{1/2} + x_N - \alpha) \right|^2 \right\}dx\\
= 2\alpha^{2-N} \int_{\R^{N}_+} \left\{\sum_{i =1}^{N-1} \left[\left|\partial_i u_{\epsilon}(s) - \frac{\alpha^{-1} s_i}{(1 - \alpha^{-2}|s'|^2)^{1/2}}\partial_{N}u_{\epsilon}(s)\right|^2 \right] \right.\\
\left.+ \left| \partial_{N} u_{\epsilon}(s) \right|^2 \right\}ds = 2\alpha^{2-N}\left[ \int_{\R^{N}_+} | \nabla u_{\epsilon}(s)|^2 ds + O(\alpha^{-1})  \right].
\end{multline*}

On the other hand, by the change of variables \eqref{change of variables reduced hyper plane}, we have that for $z \in B_N(0,1)$ with $z_N < 0$ that $0< x_N < \alpha$ and 
\[
|z_N|^{\alpha} = \left| 1 - \dfrac{x_N}{\alpha} \right|^{\alpha} = \left|  \frac{s_N}{\alpha} - (1 - \alpha^{-2}|s'|^2)^{1/2}\right|^\alpha.
\]
Now, if $s \in supp \,u_{\epsilon}$, then
\[
\left| \frac{s_N}{\alpha} - (1 - \alpha^{-2}|s'|^2)^{1/2} \right| = 1 - \frac{s_N}{\alpha} + O(\alpha^{-2})
\]
and
\begin{equation}\label{exponential hyper plane}
|z_N|^{\alpha} = e^{-s_N + O(\alpha^{-1})} + O(\alpha^{-1}).
\end{equation}
Hence
\begin{multline*}
\int_{B_{N}(0,1)} |z_N|^{\alpha} \overline{u}^p_{\epsilon}(z) dz = 2 \int_{B_{N}(0,1), \ z_N <0} |z_N|^{\alpha} \overline{u}^p_{\epsilon}(z) dz \\
=  2 \alpha^{-N} \left[ \int_{\R^{N}_+} e^{-s_N  + O(\alpha^{-1})} u_{\epsilon}^p(s)ds + O(\alpha^{-1})  \right] \\
= 2\alpha^{-N} \left[ \int_{\R^{N}_+} e^{-s_N} u_{\epsilon}^p(s)ds + O(\alpha^{-1})  \right].
\end{multline*}
By the definition of $K'_{\alpha,p}$, we have
\begin{multline*}
K'_{\alpha, p} \leq 2^{1 - 2/p}\alpha^{[2N - p(N-2)]/p}\dfrac{\int_{\R^{N}_+} | \nabla u_{\epsilon}|^2 ds + O(\alpha^{-1})}{\left( \int_{\R^{N}_+} e^{-s_N} u_{\epsilon}^p(s)ds + O(\alpha^{-1}) \right)^{2/p}} \\
= 2^{1 - 2/p}\alpha^{[2N - p(N-2)]/p}\dfrac{\int_{\R^{N}_+} | \nabla u_{\epsilon}|^2 ds }{\left( \int_{\R^{N}_+} e^{-s_N} u_{\epsilon}^p(s)ds \right)^{2/p} + O(\alpha^{-1})} \\
\leq 2^{1 - 2/p}\alpha^{[2N - p(N-2)]/p}(m_{1,p} + \epsilon) + O(\alpha^{-1}).
\end{multline*}
From \eqref{sobolev constants 1 hyper plane}, \eqref{henon from below and above hyper plane} and the last inequality we have that there exist $C_1>0$ such that
\begin{equation}\label{almost sharp hyper plane}
C_1 \leq \dfrac{K'_{\alpha,p}}{\alpha^{[2N - p(N-2)]/p}} \leq 2^{1 - 2/p}m_{1,p} + o(1) \quad \text{as} \quad \alpha \rt \infty.
\end{equation} 
\end{proof}

Let $u_{\alpha} > 0$ be a least energy solution among those which are axially symmetric with respect to $\R e_{N} \con \R^{N}$ and symmetric with respect to $x_N$ solutions of \eqref{hyper-plane of black holes proof}. Then
\[
\int_{B_{N}(0,1)} | \nabla u_{\alpha}|^2 dz = \int_{B_{N}(0,1)} |z_N|^{\alpha} u_{\alpha}^p dz = \left(K'_{\alpha,p}\right)^{p/{p-2}}
\]
and from \eqref{partial from below and above hyper plane}, there exist $C_1, C_2$ positive constants such that
\begin{multline}\label{gradient w hyper plane}
C_1 \alpha^{[2N - p(N-2)]/(p-2)} \leq \int_{B_{N}(0,1)} | \nabla u_{\alpha}|^2 dz = \int_{B_{N}(0,1)} |z_N|^{\alpha} u_{\alpha}^p dz \\
\leq C_2  \alpha^{[2N - p(N-2)]/(p-2)} \ \text{as} \ \alpha \rt \infty.
\end{multline}

Set
\[
\overline{u}_{\alpha}(z) = \alpha^{-2/(p-2)} u_{\alpha} \left( \frac{z}{\alpha}\right), \quad z \in B_{N}(0, \alpha).
\]
Then we have
\[
-\Delta \overline{u}_{\alpha}  =\left| \frac{z_N}{\alpha}\right|^{\alpha} (\overline{u}_{\alpha})^{p-1}, \quad z \in B_{N}(0, \alpha) \quad \text{with} \quad  \overline{w}_{\alpha} = 0 \quad \text{on} \quad \partial B_{N}(0, \alpha)
\]
and
\[
\int_{B_{N}(0, \alpha)} | \nabla \overline{u}_{\alpha}|^2 dz = \alpha^{- [2N - p(N-2)]/(p-2) }\int_{B_{N}(0,1)} | \nabla u_{\alpha}|^2 dz 
\]
and hence
\begin{equation}\label{bound of gradient hyper plane}
C_1 \leq \int_{B_{N}(0, \alpha)} | \nabla \overline{u}_{\alpha}|^2 dz \leq C_2 \quad \text{as} \quad \alpha \rt \infty.
\end{equation}

Then we can proceed as in Section \ref{section partial} to prove the estimate below.

\begin{lemma}\label{partial maximum value asymptotic hyper plane}
 There exist $C_1, C_2$ positive constants such that
 \begin{equation}\label{E1 hyper plane}
 C_1 \leq \max_{z \in B_{N}(0, \alpha)}\overline{u}_{\alpha} (z) \leq C_2 \quad \text{as} \quad \alpha \rt \infty,
 \end{equation}
 that is,  \begin{equation}\label{E2 hyper plane}
 C_1 \alpha^{2/(p-2)}\leq \max_{z \in B_{N}(0, 1)}u_{\alpha} (z) \leq C_2 \alpha^{2/(p-2)} \quad \text{as} \quad \alpha \rt \infty.
 \end{equation}
\end{lemma}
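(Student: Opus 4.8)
The proof follows the scheme of Lemma~\ref{partial maximum value asymptotic}; the only genuinely new ingredient is a weighted $L^2$-bound for $\overline{u}_\alpha$ playing the role of \eqref{l2}. First I would record, exactly as in \cite[pp.~473 and~474]{cao-peng-yan}, that there is $C_3 > 0$ with
\[
\inf_{u \in H^1_0(B_N(0,1)) \menos \{0\}} \frac{\int |\nabla u|^2 \, dz}{\int |z|^{(\alpha - 2)/2} u^2 \, dz} \geq C_3 \alpha^2 \quad \text{as} \quad \alpha \rt \infty .
\]
Since $|z_N|^{\alpha} \leq |z|^{\alpha} \leq |z|^{(\alpha-2)/2}$ on $B_N(0,1)$ for $\alpha > 2$, the change of variables $z = \alpha y$ together with $\overline{u}_\alpha(z) = \alpha^{-2/(p-2)} u_\alpha(z/\alpha)$ yields
\[
\int_{B_N(0,\alpha)} \left| \frac{z_N}{\alpha} \right|^{\alpha} \overline{u}_\alpha^2 \, dz = \alpha^{N - 4/(p-2)} \int_{B_N(0,1)} |y_N|^{\alpha} u_\alpha^2 \, dy \leq \frac{\alpha^{N - 2 - 4/(p-2)}}{C_3} \int_{B_N(0,1)} |\nabla u_\alpha|^2 \, dy ,
\]
which, by \eqref{gradient w hyper plane} and the identity $2N - p(N-2) - 4 = -(N-2)(p-2)$, stays bounded as $\alpha \rt \infty$.

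Granting this, the lower bound in \eqref{E1 hyper plane} is immediate: testing the equation $-\Delta \overline{u}_\alpha = |z_N/\alpha|^{\alpha}(\overline{u}_\alpha)^{p-1}$ against $\overline{u}_\alpha$ and invoking \eqref{bound of gradient hyper plane} and the bound just obtained,
\[
0 < C_1 \leq \int_{B_N(0,\alpha)} |\nabla \overline{u}_\alpha|^2 \, dz = \int_{B_N(0,\alpha)} \left| \frac{z_N}{\alpha} \right|^{\alpha} \overline{u}_\alpha^p \, dz \leq \left( \max_{B_N(0,\alpha)} \overline{u}_\alpha \right)^{p-2} \int_{B_N(0,\alpha)} \left| \frac{z_N}{\alpha} \right|^{\alpha} \overline{u}_\alpha^2 \, dz \leq C \left( \max_{B_N(0,\alpha)} \overline{u}_\alpha \right)^{p-2} ,
\]
so $\max_{B_N(0,\alpha)} \overline{u}_\alpha \geq (C_1/C)^{1/(p-2)} > 0$.

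For the upper bound I would argue by contradiction, exactly as in Lemma~\ref{partial maximum value asymptotic}. If $\| \overline{u}_{\alpha_n} \|_\infty \rt \infty$ along some $\alpha_n \rt \infty$, pick maximum points $z_{\alpha_n}$ and set
\[
\overline{v}_{\alpha_n}(z) = \frac{1}{\| \overline{u}_{\alpha_n} \|_\infty}\, \overline{u}_{\alpha_n}\!\left( \| \overline{u}_{\alpha_n} \|_\infty^{-(p-2)/2} z + z_{\alpha_n} \right) .
\]
Then $\overline{v}_{\alpha_n}(0) = \| \overline{v}_{\alpha_n} \|_\infty = 1$, $\overline{v}_{\alpha_n}$ solves $-\Delta \overline{v}_{\alpha_n} = w_n (\overline{v}_{\alpha_n})^{p-1}$ with $0 \leq w_n \leq 1$, and, since $2 < p < \frac{2N}{N-2}$, \eqref{bound of gradient hyper plane} gives $\int |\nabla \overline{v}_{\alpha_n}|^2 \, dz = \| \overline{u}_{\alpha_n} \|_\infty^{[p(N-2) - 2N]/2} \int |\nabla \overline{u}_{\alpha_n}|^2 \, dz \rt 0$. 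By classical elliptic regularity (using $0 \leq w_n \leq 1$) one gets $\overline{v}_{\alpha_n} \rt v$ in $C^1_{loc}$ of the limit domain, which is $\R^N$ or an (affine) half-space $\Omega$; the limit $v$ is thus a bounded harmonic function on $\Omega$ with $v \in \mathcal{D}^{1,2}_0(\Omega)$ and $\| v \|_\infty = 1$, contradicting the classical Liouville's theorem. Finally, \eqref{E2 hyper plane} follows from \eqref{E1 hyper plane} and $\overline{u}_\alpha(z) = \alpha^{-2/(p-2)} u_\alpha(z/\alpha)$.

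I expect the only genuinely delicate point to be the weighted $L^2$-bound of the first paragraph: one has to verify that the factor $\alpha^2$ gained from the weighted spectral gap precisely absorbs the powers of $\alpha$ produced by \eqref{gradient w hyper plane} and by the rescaling, so that the right-hand side stays $O(1)$ rather than growing. Once this is in place, everything else is a transcription of the argument of Section~\ref{section partial}, with the weights $|z|^{(\alpha-2)/2}$ and $h_\alpha$ replaced by $|z_N|^{\alpha}$.
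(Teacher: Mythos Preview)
Your proof is correct and follows exactly the route the paper intends: the paper's own proof is just the one line ``It follows exactly as in the proof of Lemma~\ref{partial maximum value asymptotic}'', and you have faithfully transcribed that argument, including the one piece that Section~\ref{section two point concentration} does not spell out, namely the analogue of \eqref{l2}. Your derivation of the weighted $L^2$-bound via $|z_N|^{\alpha}\le |z|^{\alpha}\le |z|^{(\alpha-2)/2}$ on $B_N(0,1)$ together with the Cao--Peng--Yan spectral gap is valid, and your exponent bookkeeping (the identity $2N-p(N-2)-4=-(N-2)(p-2)$) correctly shows the right-hand side is $O(1)$; the remaining steps (lower bound from H\"older, upper bound from the blow-up/Liouville contradiction) are verbatim those of Lemma~\ref{partial maximum value asymptotic}.
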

\begin{proof}
It follows exactly as in the proof of Lemma \ref{partial maximum value asymptotic}. 
\end{proof}

Let $0 \leq r_{\alpha} < 1$ such that 
\[
\max_{z \in B_{N}(0,1)} u_{\alpha}(z) = u_{\alpha}(- r_{\alpha} e_{N}) = u_{\alpha}(r_{\alpha}e_N).
\]
We can follow the proof of Lemma \ref{first estimate maximum point} to get the estimate below.
\begin{lemma}\label{first estimate maximum point hyper plane}
The product
 \[
 \alpha (1 - r_{\alpha}) \quad \text{remains bounded as} \quad \alpha \rt \infty.
 \]
\end{lemma}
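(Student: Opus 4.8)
The plan is to argue by contradiction, repeating almost verbatim the blow-up scheme used in the proof of Lemma~\ref{first estimate maximum point}. Suppose there is a sequence $\alpha_n \to \infty$ with $\alpha_n(1-r_{\alpha_n}) \to \infty$. Since $u_{\alpha_n}$ is symmetric in $x_N$, I would work near the maximum point $r_{\alpha_n}e_N$ and rescale so that it becomes the origin, setting
\[
\widetilde u_{\alpha_n}(z) \;=\; \alpha_n^{-2/(p-2)}\, u_{\alpha_n}\!\left(\tfrac{z}{\alpha_n} + r_{\alpha_n} e_N\right), \qquad z \in \Omega_n := B_N\!\big(-\alpha_n r_{\alpha_n} e_N,\, \alpha_n\big).
\]
Then $\widetilde u_{\alpha_n}$ solves
\[
-\Delta \widetilde u_{\alpha_n} \;=\; \Big|\tfrac{z_N}{\alpha_n} + r_{\alpha_n}\Big|^{\alpha_n} \big(\widetilde u_{\alpha_n}\big)^{p-1} \quad\text{in } \Omega_n, \qquad \widetilde u_{\alpha_n} = 0 \ \text{on } \partial\Omega_n,
\]
and by \eqref{bound of gradient hyper plane} the sequence is bounded in $\mathcal{D}^{1,2}(\R^N)$, while by \eqref{E1 hyper plane} one has $0 < C_1 \le \widetilde u_{\alpha_n}(0) = \max_{\Omega_n}\widetilde u_{\alpha_n} \le C_2$.

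Next I would verify the three properties that drive the contradiction. First, the boundary point of $\Omega_n$ closest to the origin lies at distance $\alpha_n(1-r_{\alpha_n}) \to \infty$, so $\Omega_n$ exhausts $\R^N$. Second --- and this is the \emph{only} place the hypothesis $\alpha_n(1-r_{\alpha_n}) \to \infty$ is used --- for $z$ in a fixed compact set and $n$ large, $\tfrac{z_N}{\alpha_n} + r_{\alpha_n}$ is positive (since $r_{\alpha_n} \to 1$, so the singular set $\{z_N=0\}$ is far away), and
\[
\Big|\tfrac{z_N}{\alpha_n} + r_{\alpha_n}\Big|^{\alpha_n} \;=\; \exp\!\Big(\alpha_n \log r_{\alpha_n} + \tfrac{z_N}{r_{\alpha_n}} + O(\alpha_n^{-1})\Big) \;\le\; C\, e^{-\alpha_n(1-r_{\alpha_n})} \;\longrightarrow\; 0,
\]
using $\log r_{\alpha_n} \le -(1-r_{\alpha_n})$; moreover the weight is $\le 1$ throughout $\Omega_n$. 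Third, the uniform bounds $C_1 \le \widetilde u_{\alpha_n} \le C_2$ together with the bound $\le 1$ on the weight make the right-hand sides uniformly bounded, so classical elliptic regularity \cite{agmon-douglis-nirenberg} and Sobolev embeddings give, along a subsequence, $\widetilde u_{\alpha_n} \to w$ in $C^1_{loc}(\R^N)$ with $w \in \mathcal{D}^{1,2}(\R^N)$, $0 \le w \le C_2$ and $w(0) = \|w\|_\infty \ge C_1 > 0$.

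Finally, passing to the limit in the equation --- the weight tending to $0$ locally uniformly kills the right-hand side --- shows that $w$ is a bounded, nontrivial harmonic function on $\R^N$, contradicting the classical Liouville theorem. This contradiction establishes that $\alpha(1-r_\alpha)$ stays bounded as $\alpha\to\infty$. The only genuinely new point compared with Lemma~\ref{first estimate maximum point} is the decay estimate for the weight $|z_N/\alpha + r_\alpha|^{\alpha}$, which is the step I would expect to be the main (mild) obstacle; once it is in place --- and it reduces to the elementary inequality above --- everything else is a word-for-word transcription of the earlier argument.
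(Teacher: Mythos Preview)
Your proposal is correct and follows exactly the paper's approach, which simply refers back to the proof of Lemma~\ref{first estimate maximum point}. One small point: you assert that $r_{\alpha_n}\to 1$, but this is not established at this stage (only $0\le r_{\alpha_n}<1$ is known); it is also unnecessary, since on any compact set one has $\bigl|\tfrac{z_N}{\alpha_n}+r_{\alpha_n}\bigr|\le r_{\alpha_n}+\tfrac{M}{\alpha_n}<1$ for $n$ large (because $\alpha_n(1-r_{\alpha_n})\to\infty$), and then $\bigl(r_{\alpha_n}+\tfrac{M}{\alpha_n}\bigr)^{\alpha_n}\le e^{M-\alpha_n(1-r_{\alpha_n})}\to 0$ without any Taylor expansion or positivity assumption.
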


\begin{proposition}\label{proposition conv d12 hyper plane}
 We have the convergence
\begin{equation}\label{conv d12 hyper plane}
 \int_{\R^{N}_+} | \nabla \widehat{u}_{\alpha} - \nabla u|^2 dz \rt 0 \quad \text{as} \quad \alpha \rt \infty,
\end{equation}
 where
 \[
 \widehat{u}_{\alpha} (z) = \alpha^{-2/(p-2)} u_{\alpha} \left( \frac{z}{\alpha} - e_{N}\right), \quad z \in \Omega_{\alpha}: = \{ z\in B_{N}(\alpha e_{N}, \alpha);  z_N < \alpha\}
 \]
 and, up to normalization, $u$ minimizes $m_{1, p}$.
 \end{proposition}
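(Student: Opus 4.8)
The plan is to transcribe, step for step, the proof of Proposition~\ref{proposition conv d12}; the structural changes are that the weight $h_\alpha$ is replaced by $|z_N|^\alpha$, the limit exponent $m_{1/2,p}$ by $m_{1,p}$, the half--space $\R^{m+1}_+$ by $\R^{N}_+$, and one has to carry along the factor $2$ produced by the reflection symmetry $x_N\mapsto -x_N$. First, extending $\widehat u_\alpha$ by zero outside $\Omega_\alpha\con\R^N_+$, by \eqref{bound of gradient hyper plane} the family $(\widehat u_\alpha)$ is bounded in $\mathcal{D}^{1,2}_0(\R^N_+)$, so, up to a subsequence, $\widehat u_\alpha\fraco u$ in $\mathcal{D}^{1,2}_0(\R^N_+)$. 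Rescaling \eqref{hyper-plane of black holes proof} gives
\[
-\Delta\widehat u_\alpha=|z_N/\alpha-1|^{\alpha}(\widehat u_\alpha)^{p-1}\ \text{ in }\ \Omega_\alpha,\qquad \widehat u_\alpha=0\ \text{ on }\ \partial\Omega_\alpha,
\]
with $0\le|z_N/\alpha-1|^{\alpha}\le 1$ on $\Omega_\alpha$ (because $0\le z_N<\alpha$ there) and, by \eqref{E2 hyper plane}, $\widehat u_\alpha\le C_2$ and $\|\widehat u_\alpha\|_\infty\ge C_1$; hence the right--hand side is bounded in $L^\infty_{loc}$, and the Agmon--Douglis--Nirenberg estimates~\cite{agmon-douglis-nirenberg} together with the Sobolev embeddings give $\widehat u_\alpha\rt u$ in $C^1_{loc}(\R^N_+)$. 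Moreover $u\not\equiv 0$, hence $u>0$ in $\R^N_+$ by the strong maximum principle, since by Lemma~\ref{first estimate maximum point hyper plane} the maximum point $\alpha(1-r_\alpha)e_N$ of $\widehat u_\alpha$ stays bounded while carrying the value $\|\widehat u_\alpha\|_\infty\ge C_1>0$.

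Next I would identify the limit problem. As in the expansion leading to \eqref{exponential hyper plane}, $|z_N/\alpha-1|^{\alpha}=(1-z_N/\alpha)^{\alpha}\rt e^{-z_N}$ uniformly on compact subsets of $\R^N$, so letting $\alpha\rt\infty$ in the equation above shows that $u$ solves $-\Delta u=e^{-z_N}u^{p-1}$ in $\R^N_+$ with $u\in\mathcal{D}^{1,2}_0(\R^N_+)$. Testing the equation with $u$ and recalling the definition of $m_{1,p}$ then gives
\[
\int_{\R^N_+}|\nabla u|^2\,dz=\int_{\R^N_+}e^{-z_N}u^p\,dz\geq m_{1,p}^{p/(p-2)}.
\]

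The heart of the matter is the matching upper bound, where the factor $2$ enters. Using the $x_N$--symmetry of $u_\alpha$ and the change of variables $x=z/\alpha-e_N$, one has
\[
\int_{\Omega_\alpha}|\nabla\widehat u_\alpha|^2\,dz=\tfrac12\,\alpha^{-[2N-p(N-2)]/(p-2)}\bigl(K'_{\alpha,p}\bigr)^{p/(p-2)},
\]
and since $\bigl(2^{1-2/p}\bigr)^{p/(p-2)}=2$, the upper bound in \eqref{almost sharp hyper plane} yields $\int_{\Omega_\alpha}|\nabla\widehat u_\alpha|^2\,dz\leq m_{1,p}^{p/(p-2)}+o(1)$. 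Putting $B_{R,\alpha}=\{z;\ z/\alpha-e_N\in B_N(-e_N,R/\alpha)\cap B_N(0,1)\}$, which lies in $\Omega_\alpha$ for $R<\alpha$ and tends to $\R^N_+\cap B_N(0,R)$, the $C^1_{loc}$ convergence forces, for each fixed $R$, $\int_{B_{R,\alpha}}|\nabla\widehat u_\alpha|^2\,dz\rt\int_{\R^N_+\cap B_N(0,R)}|\nabla u|^2\,dz$, whence this integral is $\leq m_{1,p}^{p/(p-2)}$; letting $R\rt\infty$ and comparing with the lower bound of the previous paragraph gives $\int_{\R^N_+}|\nabla u|^2\,dz=m_{1,p}^{p/(p-2)}$, so that, up to normalization, $u$ minimizes $m_{1,p}$. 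The same splitting yields the tail estimate $\int_{\Omega_\alpha\menos B_{R,\alpha}}|\nabla\widehat u_\alpha|^2\,dz=o(1)+o_R(1)$, and then \eqref{conv d12 hyper plane} follows --- word for word as at the end of the proof of Proposition~\ref{proposition conv d12} --- by cutting $\R^N_+$ into $B_N(0,R)$ and its complement, using the $C^1_{loc}$ convergence on the ball and, on the complement, the last estimate, the tail bound $\int_{\R^N_+\menos B_N(0,R)}|\nabla u|^2\,dz=o_R(1)$, and Cauchy--Schwarz for the cross term.

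The substantive part is thus the sharp energy estimate of the preceding paragraph --- combining \eqref{almost sharp hyper plane} with the $C^1_{loc}$ convergence and the splitting of $\Omega_\alpha$ into $B_{R,\alpha}$ and its complement --- together with the careful tracking of the reflection factor $2^{1-2/p}$ (equivalently $2$) through the change of variables; everything else (the elliptic regularity, the weight asymptotics, the tail estimates, and the positivity of the limit obtained via Lemma~\ref{first estimate maximum point hyper plane}) is routine and is carried out exactly as in the corresponding passages of Section~\ref{section partial}.
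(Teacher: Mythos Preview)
Your argument mirrors the paper's proof essentially step for step: the $L^2$ gradient bound, the $C^1_{loc}$ convergence via elliptic regularity, the identification of the limit problem $-\Delta u=e^{-z_N}u^{p-1}$, the lower bound $\int|\nabla u|^2\ge m_{1,p}^{p/(p-2)}$, the energy splitting through $B_{R,\alpha}$, and the tracking of the reflection factor $2$ are all exactly as in the paper. One small correction: $\widehat u_\alpha$ does \emph{not} vanish on the flat portion $\{z_N=\alpha\}$ of $\partial\Omega_\alpha$ (that piece corresponds to $x_N=0$ in the original ball, where the even symmetry gives a Neumann condition), so the zero extension is not in $\mathcal{D}^{1,2}_0(\R^N_+)$; the paper accordingly only asserts that $\nabla\widehat u_\alpha$ is bounded in $L^2(\R^N_+)$, which is enough since the flat boundary escapes to infinity and plays no role in the $C^1_{loc}$ limit.
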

\begin{proof}
It follows from \eqref{bound of gradient hyper plane} that $(\nabla \widehat{u}_{\alpha})$ remains bounded in $L^2(\R^{m+1}_+)$ as $\alpha \rt \infty$. Observe that
\[
\left\{
\begin{array}{l}
-\Delta \widehat{u}_{\alpha}(z) = \left| \frac{z_N}{\alpha} - e_{N}\right|^{\alpha} (\widehat{u}_{\alpha})^{p-1}(z) \quad \text{in} \quad \Omega_{\alpha},\\
\widehat{w}_{\alpha}(z) = 0 \quad \text{on} \quad z \in \partial \Omega_{\alpha} \quad \text{s.t.} \quad 0 \leq z_N < \alpha,\\
\frac{\partial \widehat{w}}{\partial \nu}(z) = 0 \quad \text{on} \quad z \in \partial \Omega_{\alpha} \quad \text{s.t.} \quad z_N = \alpha.
\end{array}
\right.
\]
Also observe that
\[
0 \leq \left| \frac{z_N}{\alpha} - e_{N}\right|^{\alpha} \leq 1 \quad \forall \, z \in \Omega_{\alpha}
\]
and from \eqref{E2 hyper plane}, there exist $C_1, C_2 >0$ such that
\[
C_1 \leq \widehat{u}_{\alpha}(z) \leq C_2 \quad \forall \, z \in \Omega_{\alpha}.
\]
Then, from classical regularity results for second order elliptic equations as in \cite{agmon-douglis-nirenberg} and classical Sobolev imbeddings, we obtain that there exists $w \in \mathcal{D}^{1,2}_0(\R^{N}_+)$
\begin{equation}\label{convergence c1 loc hyper plane}
\widehat{u}_{\alpha} \rt u \quad \text{in}  \quad C^1_{loc}(\R^{N}_+).
\end{equation}
Then we conclude that $w$ solves
\begin{equation}\label{limit equation * hyper plane}
 \left\{
\begin{array}{l}
 -\Delta u = e^{- z_{N}} u^{p-1} \quad \text{in} \quad \R^{N}_+,\\
 u> 0 \quad \text{in} \quad \R^{N}_+ \quad \text{and} \quad u \in \mathcal{D}^{1,2}_0(\R^{N}_+).
\end{array}
 \right.
\end{equation}
Then, from \eqref{limit equation * hyper plane} and from the definition of $m_{1,p}$ we conclude that
\begin{equation}\label{inequality level limit hyper plane}
\int_{\R^{N}_+} | \nabla u|^2 dz  = \int_{\R^{N}_+} e^{- z_{N}} u^{p} dz \geq m_{1,p}^{p/(p-2)}.
\end{equation}

With $R> 0$ large with $R < \alpha$ we define
\[
B_{R, \alpha} = \left\{ z; \frac{z}{\alpha} - e_{N} \in B_{N} (-e_{N}, R/\alpha) \cap B_{N}(0,1) \right\}.
\]
From \eqref{almost sharp hyper plane} and with the change of variables $x = \frac{z}{\alpha} - e_{N}$ we have
\begin{multline*}
\left( 2\, m_{1,p}^{p/{p-2}} + o(1) \right) \alpha^{[2(m+1) - p (m-1)]/(p-2)} \geq (K'_{\alpha,p})^{p/(p-2)} \\ = 2 \int_{B_{N}(0,1), \ z_N<0} | \nabla u_{\alpha}|^2 dx  = 2 \int_{B_{N}(0,1) \cap B_{N} (-e_{N}, R/\alpha)} | \nabla u_{\alpha}|^2 dx \\
+  2 \int_{\left(B_{N}(0,1),\ z_N <0\right) \menos B_{m+1} (-e_{m+1}, R/\alpha)} | \nabla u_{\alpha}|^2 dx \\= 2 \alpha^{[2N - p (N-2)]/(p-2)} \left[ \int_{B_{R,\alpha}} | \nabla \widehat{u}_{\alpha}|^2 dz + \int_{\Omega_{\alpha} \menos B_{R,\alpha}} | \nabla \widehat{u}_{\alpha}|^2 dz  \right]\\
 \geq 2 \alpha^{[2N - p (N-2)]/(p-2)}\int_{B_{R,\alpha}} | \nabla \widehat{u}_{\alpha}|^2 dz.
\end{multline*}

Then from \eqref{convergence c1 loc hyper plane} and \eqref{inequality level limit hyper plane} it follows that
\begin{multline*}
\left( 2\,m_{1,p}^{p/{p-2}} + o(1) \right) \geq 2 \int_{B_{R,\alpha}} | \nabla \widehat{u}_{\alpha}|^2 dz = 2 \int_{\R^{N}_+ \cap B_{N}(0, R)} | \nabla \widehat{u}_{\alpha}|^2 dz + o(1) \\
=  2 \int_{\R^{N}_+ \cap B_{N}(0, R)} | \nabla u|^2 dz + o(1) \geq 2 \, m_{1,p}^{p/(p-2)} + o_{R}(1) + o(1).
\end{multline*}

Hence we obtain
\[
\gd{\int_{\Omega_{\alpha}\menos B_{R, \alpha}}  | \nabla \widehat{u}_{\alpha}|^2 dz  = o(1) + o_R(1) \quad \text{when} \quad R< \alpha, \, R \rt \infty,}
\]
\[
\gd{\int_{B_{R, \alpha}}  | \nabla \widehat{u}_{\alpha}|^2 dz  = \int_{\R^{N}_{+} \cap B_{N}(0,R)} | \nabla u|^2 dz + o(1) = m_{1,p}^{p/(p-2)} + o_{R}(1) + o(1) }
\]
when $R< \alpha, \, R \rt \infty$.

Then, from  \eqref{convergence c1 loc hyper plane} and since $(\nabla \widehat{u}_{\alpha})$ is bounded in $L^2(\R^{N}_+)$, it follows that
\begin{multline*}
  \int_{\R^{N}_+} | \nabla \widehat{u}_{\alpha} - \nabla u|^2 dz =  \int_{\R^{N}_+ \cap B_{N}(0, R)} \!\!\!\! \!\!\!\! \!\!\!\!| \nabla \widehat{u}_{\alpha} - \nabla u|^2 dz +  \int_{\R^{N}_+ \menos B_{N}(0,R)} \!\!\!\! \!\!\!\!\!\!\!\! | \nabla \widehat{u}_{\alpha} - \nabla u|^2 dz\\
  \leq o(1) + \int_{\Omega_{\alpha}\menos B_{R, \alpha}}  | \nabla \widehat{u}_{\alpha} - \nabla u|^2 dz + \int_{\R^{N}_+ \menos B_{N}(0,R)} | \nabla u|^2 dz\\
  = o(1) + o_{R}(1) - 2 \int_{\Omega_{\alpha} \menos B_{R, \alpha}}  \nabla \widehat{u}_{\alpha}  \nabla u dz \leq o(1) + o_{R}(1) \\
  + 2 C \left(  \int_{\Omega_{\alpha} \menos B_{R, \alpha}}  |\nabla u|^2 dz\right)^{1/2}  = o(1) + o_{R}(1).
\end{multline*}

Hence we conclude that
\[
 \int_{\R^{N}_+} | \nabla \widehat{u}_{\alpha} - \nabla u|^2 dz \rt 0 \quad \text{as} \quad \alpha \rt \infty,
\] 
and that $\int_{\R^{N}_+} | \nabla u|^2 dz = m_{1, p}^{p/(p-2)}$ and so $w$ minimizes $m_{1,p}$. 
\end{proof}

\begin{proposition}\label{proposition sharp maximum point hyper plane}
 There exists $l > 0$ such that
\[
 \alpha (1 - \tau_{\alpha}) \rt l \quad \text{as} \quad \alpha \rt \infty.
\]
\end{proposition}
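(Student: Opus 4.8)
The plan is to mimic, almost verbatim, the proof of Proposition~\ref{proposition sharp maximum point}, now in dimension $N$ and with the limiting weight $e^{-z_N}$ in place of $e^{-z_N/2}$. The two inputs already available are: by Proposition~\ref{proposition conv d12 hyper plane}, $\widehat{u}_{\alpha} \rt u$ in $C^1_{loc}(\R^N_+)$ as $\alpha \rt \infty$, where $u$ solves \eqref{limit equation * hyper plane} and, after normalization, minimizes $m_{1,p}$; and by Lemma~\ref{first estimate maximum point hyper plane} the product $\alpha(1-r_{\alpha})$ stays bounded as $\alpha \rt \infty$.

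First I would locate the maximum point of $\widehat{u}_{\alpha}$ inside its domain $\Omega_{\alpha}$. Since $u_{\alpha}$ attains its maximum exactly at the two points $\pm r_{\alpha} e_N$ and $\widehat{u}_{\alpha}(z) = \alpha^{-2/(p-2)} u_{\alpha}(z/\alpha - e_N)$, the preimages of these are $z = \alpha(1 - r_{\alpha})e_N$ and $z = \alpha(1 + r_{\alpha})e_N$; the second has $z_N = \alpha(1 + r_{\alpha}) > \alpha$ and hence does not belong to $\Omega_{\alpha}$, so the unique maximum point of $\widehat{u}_{\alpha}$ is $z_{\alpha} := \alpha(1 - r_{\alpha})e_N$. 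Using the boundedness of $\alpha(1-r_{\alpha})$, along any subsequence $z_{\alpha} \rt l\, e_N$ for some $l \geq 0$, and the $C^1_{loc}$ convergence forces $l\, e_N$ to be a maximum point of the limit $u$.

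Next I would analyze the maximum set of $u$. By the moving planes technique \cite{gidas-ni-nirenberg} applied to \eqref{limit equation * hyper plane}, whose weight $e^{-z_N}$ depends only on $z_N$, together with the axial symmetry of $u$ with respect to $\R e_N \con \R^N$ inherited from $\widehat{u}_{\alpha}$, the function $u$ is strictly decreasing in $|z'|$, so its maximum lies on the axis $\R e_N$; and since $u > 0$ throughout the open half-space while $u \in \mathcal{D}^{1,2}_0(\R^N_+)$ vanishes on $\{z_N = 0\}$, the maximum is attained at an interior point $l\, e_N$ with $l > 0$. As this limiting profile, hence its maximum point, is the same for every subsequence, one concludes $\alpha(1 - r_{\alpha}) \rt l$ as $\alpha \rt \infty$.

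The main obstacle I anticipate is precisely the strict positivity of $l$: one must rule out that the rescaled maxima drift to the boundary hyperplane $\{z_N = 0\}$, equivalently that $u$ does not have its supremum on $\partial \R^N_+$. This is handled by combining the positivity $u > 0$ in $\R^N_+$, the membership $u \in \mathcal{D}^{1,2}_0(\R^N_+)$, and the strong maximum principle together with the Hopf lemma for \eqref{limit equation * hyper plane}; this is also what makes the moving planes description of the maximum point legitimate in the first place.
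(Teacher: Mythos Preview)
Your proposal is correct and follows exactly the approach of the paper, which simply says ``Exactly as the proof of Proposition~\ref{proposition sharp maximum point}.'' You have in fact supplied more detail than the paper does, in particular the identification of the unique maximum point of $\widehat{u}_{\alpha}$ inside $\Omega_{\alpha}$ and the explicit justification that $l>0$ via the boundary behaviour of $u$.
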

\begin{proof}
Exactly as the proof of Proposition \ref{proposition sharp maximum point}. 
\end{proof}

\begin{proof}[Proof of Proposition {\rm \ref{proposition asymptotic partial henon reduced hyper plane}}]
From \eqref{conv d12 hyper plane} we obtain that
\begin{multline*}
m_{1, p}^{p/(p-2)} = \int_{\R^{N}_+} | \nabla u|^2 dz = \int_{\Omega_{\alpha}} | \nabla \widehat{u}_{\alpha}|^2 dz + o(1)\\
=  \alpha^{[ p (N-2)- 2N]/(p-2)} \int_{B_{N}(0,1), \ z_N <0} | \nabla u_{\alpha}|^2 dx + o(1) \\
=  \frac{1}{2}\alpha^{[ p (N-2)- 2N]/(p-2)} (K'_{\alpha,p})^{p/(p-2)}  + o(1).
\end{multline*}
Therefore
\[
 \dfrac{K'_{\alpha,p}}{\alpha^{[2N - p(N-2)]/p}} = 2^{1 -2/p} m_{1,p} + o(1) \quad \text{as} \quad \alpha \rt \infty.
\] 
\end{proof}

\medskip

\begin{proof}[Proof of Theorem {\rm \ref{theorem two point concentration}}] It follows from Lemma \ref{partial maximum value asymptotic hyper plane}, Propositions \ref{proposition conv d12 hyper plane} and \ref{proposition sharp maximum point hyper plane}. 
\end{proof}

\end{document}